\definecolor{red}{rgb}{0.82,0.00,0.00}  
\numberwithin{equation}{section}
\newcommand{\norm}[1]{\parallel #1 \parallel}
 \newcommand{\dd}{{\rm div}}
\newcommand{\ds}{\displaystyle}
\newcommand{\Om}{{\Omega}}
\def\R{{\rm I\hspace{-0.50ex}R} }
\def\P{{\rm I\hspace{-0.50ex}P} }
\def\w{{\bf w}}
\def\f{{\bf f}}
\def\v{{ \bf v}}
\def\u{{\bf u}}
\def\0{{\bf 0}}
\def\x{{\bf{x}}}
\def\s{s}
\def\n{{\bf n}}
\def\div{\operatorname{div}}
\newtheorem{lem}{Lemma}[section]
\newtheorem{thm}[lem]{Theorem}
\newtheorem{Rem}[lem]{Remark}
\newtheorem{propri}[lem]{Property}
\renewcommand{\epsilon}{\varepsilon}
\def\twoplot[#1]#2#3#4#5{
\begin{figure}[hbt]
\begin{multicols}{2}
\begin{center}
    \includegraphics*[#1]{#2}
    \caption{\label{#2} #4}
\end{center}
\begin{center}
    \includegraphics*[#1]{#3}
    \caption{\label{#3} #5}
\end{center}
\end{multicols}
\end{figure}
}
\begin{document}

\bibliographystyle{plain}

\title[{\it A posteriori} error estimates for Darcy-Forchheimer's problem]{ {\it A posteriori} error estimates for Darcy-Forchheimer's problem}
\author[ SAYAH  ]{Toni Sayah$^{\dagger}$}
\author[G. Semaan]{Georges Semaan$^\dagger$}
\author[F. Triki]{Faouzi Triki$^\ddagger$}
\thanks{ \today.
\newline
$^{\dagger}$  Laboratoire de "Math\'ematiques et Applications", Unit\'e de Recherche "Math\'ematoqies et Mod\'elisation", CAR, Facult\'e des sciences de Beyrouth, Universit\'e Saint-Joseph de Beyrouth, Liban.
\newline
Emails :toni.sayah@usj.edu.lb, georges.semaan2@net.usj.edu.lb,\\
$^\ddagger$ Laboratoire Jean Kuntzmann, UMR CNRS 5224, Universit\'e Grenoble-Alpes, 700 Avenue Centrale, 38401 Saint-Martin-d'H\`eres, France. E-mails: faouzi.triki@univ-grenoble-alpes.fr. FT was supported by the grant ANR-17-CE40-0029 of the French National Research Agency ANR (project MultiOnde)}


\begin{abstract}
\noindent This work deals with the \emph{a posteriori} error estimates for the Darcy-Forchheimer problem. We introduce the corresponding variational formulation and discretize  it  by using the finite-element method. {\it A posteriori} error estimate with two types of computable error indicators is showed. The first one is linked to the linearization and the second one to the discretization. Finally, numerical computations are performed  to show the effectiveness of the error  indicators.
\end{abstract}

\maketitle

\section{Introduction}\label{intro}
\noindent Let $\Omega$ be a bounded open domain of $\R^d$ ($d = 2, 3$) with  Lipschitz-continuous boundary $\Gamma=\partial \Omega$. We consider the Darcy-Forchheimer equation
\begin{equation}\label{E1}
\ds \frac{\mu}{\rho} K^{-1}  \u + \frac{\beta}{\rho} |\u| \u+ \nabla p = \f \;\;\;\;\; \mbox{in}\;\;\; \Omega,
\end{equation}
with the divergence constraint
\begin{equation}\label{E2}
\ds \div \u = b \;\;\;\;\; \mbox{in}\;\;\; \Omega\textcolor{blue}{,}
\end{equation}
and the boundary condition
\begin{equation}\label{E3}
\ds \u \cdot \n = g \;\;\;\;\; \mbox{on}\;\; \partial \Omega,
\end{equation}

where $\u$ represents the velocity, $p$ represents the pressure, $\n$ is the unit exterior normal vector to $\Gamma$, $|.|$ denotes the Euclidean norm and $|\u|^2 = \u \cdot \u$. The parameters $\rho$, $\mu$ and $\beta$ represent the density of the fluid, its viscosity and its dynamic viscosity, respectively. $\beta$ is also referred as Forchheimer number which is a scalar positive constant. $K$ is the permeability tensor, assumed to be uniformly positive definite and bounded such that there exists two positive real numbers $K_m$ and $K_M$ verifying
\begin{equation}\label{KmM}
0 < K_m \, \x \cdot \x \le (K^{-1}(\x)\x) \cdot \x \le K_M \, \x \cdot \x.
\end{equation}
For the compatibility, we suppose that $b$ and $g$ verify the following compatibility condition:
\[
\ds \int_\Omega b(\x) d\x = \ds \int_\Gamma \mbox{g}(s) \, ds.
\]
We denote by Problem $(P)$ the system of equations (\eqref{E1}, \eqref{E2}, \eqref{E3}). \\

Darcy's law describes the creeping flow of Newtonian fluids in porous media. It is represented by Equation \eqref{E1} without the non-linear term $\ds \frac{\beta}{\rho} |\u| \u$ and is valid by experiment under the condition that the creeping velocity is low and the porosity and permeability are small enough by Darcy in $1856$ \cite{Darcy}. Forchheimer \cite{Forchheimer} conducted flow experiments in sandpacks and recognized that when the velocity is higher and the porosity is nonuniform, Darcy's equation can be replaced by Equation \eqref{E1}. A theoretical derivation of Forchheimer's law can be found in \cite{RM}.\\
\noindent Multiple works approximated the Darcy-Forchheimer equation by using finite element methods. Girault and Wheeler \cite{GW} approximated the velocity by piecewise constants and the pressure by Crouzeix-Raviart element. They also proposed and studied an alternating directions iterative method to solve the system of nonlinear equations obtained by finite element discretizaton. Lopez and al. \cite{LMJ} carried out numerical tests of the methods studied in \cite{GW} in order to corroborate the results presented there. In \cite{HH}, the authors proposed and studied a mixed element approximation: the Raviart-Thomas mixed element, Brezzi-Douglas-Marini mixed element. Salas J. et al. \cite{SJ} presented a theoretical study of the mixed finite element space, as proposed in \cite{LMJ}. In \cite{SST}, we considered the Darcy-Forchheimer problem coupled with the convection-diffusion-reaction problem. We established existence of solutions by using a Galerkin method and we proved uniqueness. Then, we introduced and analyzed a numerical scheme based on the finite element method and we derived an optimal {\it a priori} error estimates for the proposed numerical scheme.
In this work, we introduce a numerical iterative scheme, show the corresponding convergence, establish the corresponding {\it a posteriori} error estimates and show corresponding numerical investigations.\\
%

The present work investigates \emph{a posteriori} error estimates for the finite element discretization of  Darcy-Forchheimer problem.These estimates can be used to evaluate the solution errors of the discrete problem without requiring any a priori information on the exact solution. Indeed, the  \emph{a posteriori} analysis controls the overall discretization error of a problem by providing error indicators  that are  easy to compute.  Once these error indicators are constructed, their efficiency can be proven by bounding each indicator by the local error. \emph{A posteriori} analysis was first introduced by I. Babu$\check{s}$ka \cite{Babu78C}, developed by {R.~Verf\"{u}rth} \cite{Verfurth1996}, and has been the object of a large number of publications. {\it A posteriori} error estimations have been studied for several types of partial differential equations such that the Stokes or Navier-Stokes equation  (see for instance \cite{Verfurth1996,bernardi2009,Oden1, sayah1,sayah2,sayah3,sayah5}),  the Maxwell and Lam\'e equations \cite{Monk, Bao}. Many works have been established for the Darcy flow, see for instance \cite{alonso, braess, carstensen, lovadina}. In \cite{chen}, Chen and Wang establish  optimal {\it a poteriori} error estimates for the $H(\dd,\Omega)$ conforming mixed finite element  method applied to   the coupled Darcy-Stokes system  in two dimensions.  For the Darcy equations with pressure dependent viscosity, we refer to \cite{gatica} and the references therein. In \cite{DGHS}, we establish {\it a posteriori} error estimates for Darcy's problem coupled with the heat equation. In this paper, we study the {\it  posteriori} error estimates corresponding to Problem $(P)$ and show corresponding numerical results.

This paper is organised as follow:
\begin{itemize}
\item Section 2 describes the problem and the weak formulation.
\item Section 3 describes the discretization and studies  the iterative scheme.
\item Section 4 is devoted to the {\it a posteriori} error estimates.
\item Section 5 shows numerical investigations.
\end{itemize}
\section{Variational formulation}
In this section, we begin by introducing several definitions and notations in order to write the weak formulation corresponding to Problem $(P)$.\\
Let $\alpha=(\alpha_1,\alpha_2, \dots \alpha_d)$ be a $d$-uplet of non negative
integers, set $|\alpha|=\ds \sum_{i=1}^d \alpha_i$, and define the partial
derivative $\partial^\alpha$ by
$$
\partial^{\alpha}=\ds \frac{\partial^{|\alpha|}}{\partial x_1^{\alpha_1}\partial
x_2^{\alpha_2}\dots\partial x_d^{\alpha_d}}.
$$
Then, for any positive integer $m$ and any number $q\geq 1$, we recall
the classical Sobolev space
\begin{equation}
\label{eq:Wm,p}
W^{m,q}(\Omega)=\{v \in L^q(\Omega);\,\forall\,|\alpha|\leq
m,\;\partial^{\alpha} v \in L^q(\Omega)\},
\end{equation}
equipped with the seminorm
\begin{equation}
\label{eq:semnormWm,p}
|v|_{W^{m,q}(\Omega)}=\big\{\sum_{|\alpha|=m} \int_{\Omega}
|\partial^{\alpha} v|^q \,d\x\,\big\}^{\frac{1}{q}}
\end{equation}
and the norm
\begin{equation}
\label{eq:normWm,p}
\|v\|_{W^{m,q}(\Omega)}=\big\{\sum_{0\leq k\leq m}
|v|_{W^{k,q}(\Omega)}^q \big\}^{\frac{1}{q}}.
\end{equation}
When $q=2$, this space is the Hilbert space $H^m(\Omega)$.
In
particular, the scalar product of $L^2(\Omega)$ is denoted by
$(.,.)$. Furthermore, we recall the following standard space for Darcy's equations:
\[
L^2_0(\Omega) = \{ v\in L^2(\Omega); \ds \int_\Omega v \, d\x =0 \}.
\]
The definitions of these spaces are extended straightforwardly to
vectors, with the same notation, but with the following
modification for the norms in the non-Hilbert case. Let $\v$ be a
vector valued function; we set
\begin{equation}
\label{eq:normLp} \|\v\|_{L^q(\Omega)}= \big(\int _{\Omega}
|\v|^q\,d\x\, \big)^{\frac{1}{q}},
\end{equation}
where $|.|$ denotes the Euclidean vector norm.\\

%
%
%

We introduce the spaces:
\[
\begin{array}{ll}
\medskip
X = L^3(\Omega)^d, \\
M = W^{1, \frac{3}{2}}(\Omega) \cap L^2_0(\Omega),\\
{H = \left\{ \v \in X; \div \v \in L^{3d/(d+3)}(\Omega) \right\},}\\
{ V  = \{ \v\in X ; \forall q \in M , \ds \int_\Omega \nabla q \cdot \v \, d\x= 0  \}.}
\end{array}
\]
{ The space $H$ endowed  with the graph norm is complete. Moreover
$D(\overline{\Omega}) $ is dense in $H$, and $\v \cdot \n$ belongs to $W^{1, 3/2}(\Gamma)$ for all $v \in H$ \cite{Te}. }
We have the following inf-sup condition (see \cite{GW} for the proof)
\begin{equation}\label{inf-supS1}
\underset{q\in M}{\inf} \; \; \underset{\v \in X}{\sup} \; \; \ds \frac{\ds \int_\Omega \nabla q \cdot \v \, d\x}{||\nabla q ||_{L^{3/2}(\Omega)} ||\v||_{L^3(\Omega)}} = 1.
\end{equation}
The velocity $\u$ and pressure $p$ of Problem $(P)$ are considered respectively in $H$ and $M $, {while}  $b$ and $g $ are assumed to be respectively in $L^{3d/(d+3)}(\Omega)$ and $L^{3(d-1)/d}(\Gamma)$ (see \cite{GW} for details). \\

In order to write the variational formulation {associated} to Problem $(P)$, we introduce the mapping $\v \longrightarrow \mathcal{A}(\v)$ defined by:
\begin{equation*}
    \begin{array}{rccl}
  \mathcal{A}:&  L^3(\Omega)^d &\mapsto&L^\frac{3}{2}(\Omega)^d \\
    &\v &\mapsto &\mathcal{A}(\v)=\ds \frac{\mu}{\rho}K^{-1}\v+\frac{\beta}{\rho}|\v|\v. 
    \end{array}
\end{equation*}
We refer to \cite{GW,monot} for the following properties of $\mathcal{A}$.
\begin{propri}\label{monoticity}
We have the following monotonicity properties:
\begin{enumerate}
\item for all $\v \in L^{3}(\Omega)^d$,
\begin{equation}\label{prop1}
\ds \frac{\mu}{\rho} \int_{\Omega}K^{-1} \v \cdot \v\,d\x \geq \ds  \frac{\mu K_m}{\rho}\left\|\v\right\|_{L^2(\Omega)}^2
\end{equation}
\item  
for all $\v,\w \in L^{3}(\Omega)^d$,
\begin{equation}\label{prop2}
\ds \frac{\beta}{\rho} \int_{\Omega} (|\v|\v - |\w|\w )(\v-\w) \,d\x \geq
c_m \left\|\v-\w\right\|_{L^3(\Omega)}^3.
\end{equation}
where $c_m$ is a strictly positive constant. 
\end{enumerate}
\end{propri}
{Recall the following Green's formula \cite{Te}
\begin{equation}\label{green}
\forall q \in M,\;  \forall v \in H,  \quad \ds  \int_\Omega \nabla q \cdot \v \, d\x  =  \ds -\int_\Omega q \div v d\x +\int_\Gamma q \v \cdot \n ds.
\end{equation}
}
{Using the Green formula \eqref{green} one can show that } Problem $(P)$ is equivalent to the following variational formulation: Find $(\u,p)\in X  \times M $ such that
\begin{equation}\label{V1}
\left\{
\begin{array}{ll}
\medskip
\forall \v \in X , \quad \ds  \int_\Omega \mathcal{A}(\u) \cdot \v \, d\x + \int_\Omega \nabla p \cdot \v \, d\x = \ds \int_\Omega \f \cdot \v \, d\x,\\
\forall q\in M , \quad \ds \int_\Omega \nabla q \cdot \u \, d\x = -\int_\Omega b \, q \, d\x + \int_\Gamma g  \, q \, ds.
\end{array}
\right.
\end{equation}
It can be demonstrated that, for all $b \in L^{3d/(d+3)}(\Omega)$ and $g  \in L^{3(d-1)/d}(\Gamma)$, there is only one $\u_l \in {  L^{3}(\Omega)/V }$ such that
\begin{equation} \label{equationul}
\forall q \in M , \, \ds \int_\Omega \nabla q \cdot \u_l \, d\x = -\int_\Omega b \, q \, d\x + \int_\Gamma g  \, q \, ds,
\end{equation}
and
\[
\ds ||\u_l ||_{L^3(\Omega)^d/V } \le C \big( ||b||_{L^{3d/(d+3)}(\Omega)}  + ||g  ||_{L^{3(d-1)/d}(\Gamma)}  \big),
\]
where  $C>0$ is a constant depending only on $\Omega$ and $d$. \\

%
%
%
%

{Using The Banach-Necas-Babu$\check{s}$ka Theorem (see for instance \cite{Babu73}, Theorem A.1 in 
\cite{TrikiYin21}) one can show that \eqref{V1}, and equivalently the problem $(P)$} admits a unique solution $(\u,p) \in X  \times M $ satisfying the relations
\begin{equation}
\begin{array}{ll}\label{bound1u}
\medskip
||\u||_{L^3(\Omega)} \le C \Big(  ||\u_l||_{L^3(\Omega)}^3 + ||\u_l||_{L^2(\Omega)}^2 + ||\f ||_{L^2(\Omega)}^2 \Big)^{1/3}, \\
||\nabla p||_{L^{3/2}(\Omega)} \le C \Big(  ||\u||_{L^{3/2}(\Omega)} +  ||\u ||_{L^3(\Omega)}^2 + ||\f ||_{L^2(\Omega)} \Big).
\end{array}
\end{equation}

{For further details of the weak formulation corresponding to Problem $(P)$ presented in this section, We refer to \cite{GW}. }
\section{Discretization and {\it{a posteriori}} estimation}
In this section, we introduce an iterative numerical scheme  {to solve the weak variational formulation \eqref{V1} of the  Problem $(P)$, and study its convergence.} We establish the  corresponding {\it a posteriori} error estimate {in a second step}. \\

From now on, we assume that $\Omega$ is a polygon when $d=2$ or
polyhedron when $d=3$, so it can be completely meshed.  For
 the space discretization,  we consider a
regular (see Ciarlet~\cite{PGC}) family of triangulations
$( \mathcal{T}_h )_h$ of $\Omega$ which is a set of closed non degenerate
triangles for $d=2$ or tetrahedra for $d=3$, called elements,
satisfying,
\begin{itemize}
\item for each $h$, $\overline{\Omega}$ is the union of all elements of
 $\mathcal{T}_h$;
\item the intersection of two distinct elements of  $\mathcal{T}_h$ is
either empty, a common vertex, or an entire common edge (or face
when $d=3$);
\item the ratio of the diameter   $h_\kappa$  of an element $\kappa$ in  $\mathcal{T}_h$  to
the diameter  $\rho_\kappa$  of its inscribed circle when $d=2$ or ball when $d=3$
is bounded by a constant independent of $h$: there exists a positive constant $\sigma$ independent of $h$ such that,
\begin{equation}
\label{eq:reg}
\ds \max_{\kappa\in \mathcal{T}_{h}} \frac{h_\kappa}{\rho_\kappa} \le \sigma.
\end{equation}
\end{itemize}
As usual, $h$ denotes the maximal diameter
of all elements of $\mathcal{T}_{h}$.   To define the finite element functions,  let $r$ be a non negative integer.  For each $\kappa$ in
$\mathcal{T}_{h}$, we denote by  $\P_r(\kappa)$  the space of restrictions to $\kappa$ of polynomials in $d$ variables and total
degree at most  $r$, with a similar notation on the faces or edges of $\kappa$. For every edge (when $d=2$) or face (when $d=3$) $e$ of the mesh  $\mathcal{T}_h$, we denote by $h_e$ the diameter of $e$.

We shall use the following inverse inequality:  for any numbers $p,q \geq 2$,  for any dimension $d$, and for any non negative integer $r$,  there exist  constants $C_I(p)$ and $C_J(q)$ such that for any polynomial function $v_h$ of degree $r$ on an element $\kappa$ or an edge (when $d = 2$) or face (when $d = 3$) $e$ of the mesh $\mathcal{T}_{h}$,
\begin{equation}
\label{eq:inversin}
\begin{array}{ll}
\medskip
 \|v_h\|_{L^p(\kappa)}\leq  C_I(p)  h_\kappa^{\frac{d}{p}-\frac{d}{2}}\|v_h\|_{L^2(\kappa)}\\
 \mbox{and}\\
 \|v_h\|_{L^q(e)}\leq  C_J(q)  h_e^{\frac{d-1}{q}-\frac{d-1}{2}}\|v_h\|_{L^2(e)}.
 \end{array}
\end{equation}
$C_I$ and $C_J$ depend on the regularity parameter $\sigma$ of \eqref{eq:reg}. In all the rest of the paper and for the simplicity, we will omit the dependency of $C_I$ and $C_J$ with $p$ and $q$.\\

Let $X_h \subset X $ and $M_h\subset M $ be the discrete spaces corresponding to the velocity and the pressure given by:
\begin{equation}
\begin{split}
Z_h =&\{q_h\in C^0(\bar{\Omega}); \, \forall\, \kappa \in \mathcal{T}_h, \;q_{h}|_\kappa \in \P_1 \}, \\
X_h=&\{\v_h\in L^2(\bar{\Omega})^d; \, \forall\, \kappa \in \mathcal{T}_h, \;\v_h|_\kappa \in \P_0^d \}, \\
M_h=& Z_h \cap L^2_0(\Omega).
\end{split}
\end{equation}

They satisfy the following inf-sup condition (see \cite{SJ}):
\begin{equation}
\label{infsuph1}
\forall\, q_h\in M_h,\; \sup_{\v_h\in X_h}\ds \frac{\ds \int_{\Omega} \nabla q_h \cdot \v_h \,  d\x\,}{\|\v_h\|_{X }}\geq \beta_1  \|q_h\|_{M_h},
\end{equation}
where $\beta_1 $ is a positive constant independent of $h$.\\

Problem \eqref{V1} can be discretized as following:
\begin{equation}\label{V1h}
\left\{
\begin{array}{ll}
\medskip
\forall \v_h \in X_h, \quad \ds  \int_\Omega \mathcal{A}(\u_h) \cdot \v_h \, d\x + \int_\Omega \nabla p_h \cdot \v_h \, d\x = \ds \int_\Omega \f \cdot \v_h \, d\x,\\
\forall q_h\in M_h, \quad \ds \int_\Omega \nabla q_h \cdot \u_h \, d\x = -\int_\Omega b q_h \, d\x + \int_\Gamma g  q_h \, ds.
\end{array}
\right.
\end{equation}
It is shown in \cite{SJ} that there exists a unique $\u_{h,l} \in X_h$ such that
\begin{equation}\label{equat11}
\forall q_h \in M_h, \quad \ds \int_\Omega \nabla q_h \cdot \u_{h,l} \, d\x = \ds - \int_\Omega b\, q_h d\x + \int_\Gamma g \, q_h ds,
\end{equation}
and $\u_{h,l}$ satisfies the following bound,
\begin{equation}\label{equat122222}
||\u_{h,l}||_{L^3(\Omega)^d} \le C_l \big( ||b||_{L^{\frac{3d}{3+d}}(\Omega)} + ||g ||_{L^{\frac{3(d-1)}{d}}(\Gamma)} \big).
\end{equation}
It is also shown in \cite{SJ} that Problem \eqref{V1h} admits a unique solution $(\u_h,p_h) \in X_h \times M_h$ satisfying exactly similar bounds as \eqref{bound1u}. We refer to \cite{SJ} for the proof of the following {\it a priori} error estimates:
{\thm \label{thmpriori1}
The solutions $(\u,p)$ of \eqref{V1} and $(\u_h,p_h)$ of $\eqref{V1h}$ verify the following {\it a priori} error: \\
If $(\u,p)\in W^{1,4}(\Omega)^d \times W^{2,3/2}(\Omega)$, then there exists {strictly positive constants $ C_{u}, C_p$} independent of $h$ such that
\begin{equation}\label{priori1}
\begin{array}{ll}
||\u - \u_h||_{L^2(\Omega)}  \le C_u h, \\
||\nabla (p - p_h) ||_{3/2,\Omega} \le C_p h.
\end{array}
\end{equation}
}
\subsection{Iterative algorithm:} To compute the solution of the non-linear problem \eqref{V1h}, we introduce the following iterative algorithm:
for a given initial guess $\u_h^0 \in X_h$ and having $\u_h^i$ at each iteration $i$, we compute $(\u_h^{i+1}, p^{i+1}_h)$ solution of
\begin{equation}\label{V1hi}
\left\{
\begin{array}{ll}
\medskip
\forall \v_h \in X_h, \quad \ds \int_\Omega \alpha (\u_h^{i+1} - \u_h^i) \cdot \v_h \, d\x +  \frac{\mu}{\rho} \int_\Omega K^{-1} \u^{i+1}_h \cdot \v_h \, d\x + \frac{\beta}{\rho} \int_\Omega |\u^i_h| \u^{i+1}_h \cdot \v_h \, d\x \\
\medskip
\hspace{5cm} + \ds \int_\Omega \nabla p^{i+1}_h \cdot \v_h \, d\x = \ds \int_\Omega \f \cdot \v_h \, d\x,\\
\forall q_h\in M_h, \quad \ds \int_\Omega \nabla q_h \cdot \u^{i+1}_h \, d\x = -\int_\Omega b q_h \, d\x + \int_\Gamma g  q_h \, ds,
\end{array}
\right.
\end{equation}
%
%
%
%

{
We next  address  the convergence of Scheme  \eqref{V1hi}. The analysis of convergence  has two principal steps. The first is to derive an uniform bound to the numerical iterative solution
with respect to the iteration index. Then the obtained 
bound is used to study the convergence of a linear explicit scheme.   }
%
{\thm \label{boundu1} Problem $(\ref{V1hi})$ admits a unique solution $(\u_h^{i+1},p_h^{i+1})\in X_h \times M_h$. Furthermore, if the initial value $\u_h^0$ satisfies  the condition
\begin{equation}\label{cond1f}
{|| \u_h^0 ||_{L^2(\Omega)} \le \ds L_1(\f,\u_{h,l}, \alpha),}
\end{equation}
where
\begin{eqnarray}\label{L1}
L_1(\f,\u_{h,l}, \alpha) = \ds \left( \ell_0+\ell_1 \alpha \right)^{\frac{1}{2}},
\end{eqnarray}
with 
\begin{eqnarray}
\ell_0= 
{\left( \frac{2\rho}{\mu K_m} \right)^2}
\Big(  \ds(\frac{3\rho}{2\mu K_m}+ \frac{1}{2}) ||\f||^2_{L^2(\Omega)} + (\frac{1}{2} + \frac{3\mu K_M^2}{2\rho K_m} )  ||\u_{h,l} ||^2_{L^2(\Omega)}   +\frac{4\beta}{3\rho} || \u_{h,l} ||^3_{L^3(\Omega)}\Big),\\
\ell_1= { \left( \frac{2\rho}{\mu K_m} \right)^2}  ||\u_{h,l} ||^2_{L^2(\Omega)},
\end{eqnarray}
and if $\alpha \geq \alpha^\star$ with $\alpha^\star = 4(\gamma_1 + \sqrt{\gamma_1^2+\gamma_2})^2$ where %
\[
\begin{array}{rcl}
\medskip
\gamma_1 &=& \ds \frac{8\beta K_M}{3\rho K_m}   C_I^3 h^{-\frac{d}{2}} \sqrt{\ell_1}, \\ \medskip
\gamma_2 &=& \ds \frac{3\beta^2}{2\rho \mu K_m} {  C_I^4} h^{-\frac{2d}{3}}|| \u_{h,l} ||^2_{L^3(\Omega)}
+ \frac{8 \beta}{3 \mu K_m}  {  C_I^3} h^{-\frac{d}{2}} \| \f \|_{L^2(\Omega)^2} \\
&& \ds + \frac{8 \beta K_M}{3 \rho K_m}  {  C_I^3} h^{-\frac{d}{2}} \sqrt{\ell_0} +
\frac{8 \beta^2}{3 \rho^2}   {  C_I^6 }    h^{-d}  \max(\frac{2\rho \ell_0}{\mu K_m},\ell_1),
\end{array}
\]
\noindent then the solution of Problem $(\ref{V1hi})$ satisfies the estimates
\begin{equation}\label{equatt2}
{|| \u_h^{i+1} ||_{L^2(\Omega)} \le \ds L_1(\f,\u_{h,l},\alpha),}
\end{equation}
and
\begin{equation}\label{equatt2a}
{|| \u_h^{i+1} ||^3_{L^3(\Omega)} \le \ds \frac{3\mu K_m}{2\beta}  L_1^2(\f,\u_{h,l},\alpha).}
\end{equation}
}
\noindent \textbf {Proof.} {  To prove the existence and  uniqueness of the solution of Problem $(\ref{V1hi})$ which is a square linear system in finite dimension, it suffices to show the uniqueness. For a given $\u_h^i$, let $(\u_{h1}^{i+1},p_{h1}^{i+1})$ and $(\u_{h2}^{i+1},p_{h2}^{i+1})$ two different solutions of Problem $(\ref{V1hi})$ and let $\w_h= \u_{h1}^{i+1} - \u_{h2}^{i+1}$ and $\xi_h = p_{h1}^{i+1} - p_{h2}^{i+1}$, then $(\w_h,\xi_h)$ is the solution of the following problem:
\begin{equation*}
\left\{
\begin{array}{ll}
\medskip
\forall \v_h \in X_h, \quad \ds \int_\Omega \alpha \w_h \cdot \v_h \, d\x +  \frac{\mu}{\rho} \int_\Omega K^{-1} \w_h \cdot \v_h \, d\x + \frac{\beta}{\rho} \int_\Omega |\u^i_h| \w_h \cdot \v_h \, d\x + \ds \int_\Omega \nabla \xi_h \cdot \v_h \, d\x = 0,\\
\forall q_h\in M_h, \quad \ds \int_\Omega \nabla q_h \cdot \w_h \, d\x = 0.
\end{array}
\right.
\end{equation*}
By taking $(\v_h,q_h)=(\w_h,\xi_h)$ and by remarking that $\ds \frac{\beta}{\rho} \int_\Omega |\u^i_h| |\w_h|^2 \, d\x \ge 0$, we obtain by using the properties of $K^{-1}$ the following bound:
\[
(\alpha +  \ds \frac {K_m \mu}{\rho}) ||\w_h||^2_{L^2(\Omega)}  \le 0.
\]
Thus, we deduce that $\w_h = 0$. The inf-sup condition \eqref{infsuph1} deduces that $\xi_h=0$ and then, we get  the uniqueness of the solution of Problem $(\ref{V1hi})$.\\
}
%
\noindent To prove the bound \eqref{equatt2}, we need first to bound the error $\|   \u_h^{i+1} - \u_h^i \|_{L^2(\Omega)}$ with respect to the previous value $\u_h^{i}$. The second equation of Problem $(\ref{V1hi})$ allows us to deduce the relation
\begin{equation}\label{reluiuip1}
\forall q_h \in M_h, \quad \ds \int_\Omega \nabla q_h \cdot (\u^{i+1}_h - \u_h^i) \, d\x  = 0.
\end{equation}
Then, the  first equation of (\ref{V1hi}) with $\v_h = \u_h^{i+1} - \u_h^i$ gives
\[
\ds \alpha \| \u_{h}^{i+1} -\u_{h}^i\|^2_{L^2(\Omega)} + \frac{\mu}{\rho} \int_\Omega K^{-1} \u_{h}^{i+1} \cdot (\u_{h}^{i+1} -\u_{h}^i) \, d\x + \frac{\beta}{\rho} \int_\Omega |\u_{h}^i|  \u_{h}^{i+1} \cdot (\u_{h}^{i+1} -\u_{h}^i) \, d\x =  \ds \int_\Omega \f \cdot (\u_{h}^{i+1} - \u_h^i) \, d\x.
\]
By inserting $\pm \u_h^i$ in the second and the third terms of the last equation we get
\[
\begin{array}{ll}
\medskip
\ds \alpha \| \u_{h}^{i+1} -\u_{h}^i\|^2_{L^2(\Omega)} + \frac{\mu}{\rho} \int_\Omega K^{-1} |\u_{h}^{i+1} -\u_{h}^i |^2 \, d\x +
\frac{\beta}{\rho} \int_\Omega |\u_{h}^i| |\u_{h}^{i+1} -\u_{h}^i |^2 \, d\x \\
\hspace{2cm} = \ds \int_\Omega \f \cdot (\u_{h}^{i+1} - \u_h^i) \, d\x - \frac{\mu}{\rho} \int_\Omega K^{-1} \u_{h}^i \cdot (\u_{h}^{i+1} -\u_{h}^i) \, d\x
- \frac{\beta}{\rho} \int_\Omega |\u_{h}^i|  \u_{h}^i \cdot (\u_{h}^{i+1} -\u_{h}^i) \, d\x.
\end{array}
\]
By using the properties of $K$, the Cauchy-Schwartz inequality and Relation \eqref{eq:inversin},
we get, by remarking that the third term of the last equation is non-negative, the bound
\[
\begin{array}{ll}
\medskip
\ds \alpha \| \u_{h}^{i+1} -\u_{h}^i\|^2_{L^2(\Omega)} + \frac{\mu K_m}{\rho} \| \u_{h}^{i+1} -\u_{h}^i\|^2_{L^2(\Omega)} \le
\| \f \|_{L^2(\Omega)} \| \u_{h}^{i+1} -\u_{h}^i\|_{L^2(\Omega)} \\
\hspace{4cm} + \ds \frac{\mu K_M}{\rho}  \|\u_{h}^i \|_{L^2(\Omega)} \|\u_{h}^{i+1} -\u_{h}^i\|_{L^2(\Omega)} +
\frac{\beta}{\rho} {  C_I^3 }  h^{-\frac{d}{2}}   \|\u_{h}^i\|^2_{L^2(\Omega)}  \|\u_{h}^{i+1} -\u_{h}^i\|_{L^2(\Omega)}.
\end{array}
\]
%
{ We simplify by $\|\u_{h}^{i+1} -\u_{h}^i\|_{L^2(\Omega)}$ to obtain}
{
\[
\begin{array}{ll}
\medskip
\ds (\alpha+\frac{\mu K_m}{2\rho}) \| \u_{h}^{i+1} -\u_{h}^i\|_{L^2(\Omega)}   \le
\ds \| \f \|_{L^2(\Omega)} 
+  \frac{\mu K_M}{\rho}  \|\u_{h}^i \|_{L^2(\Omega)}  +
\frac{\beta}{\rho} {  C_I^3 }  h^{-\frac{d}{2}}   \|\u_{h}^i\|^2_{L^2(\Omega)},
\end{array}
\]}
and then { we get} the following bound
{
\begin{equation}\label{bound11}
\ds  \| \u_{h}^{i+1} -\u_{h}^i\|_{L^2(\Omega)} \le L_2(\f, \| \u_h^i \|_{L^2(\Omega)}),
\end{equation}}
where {$$ L_2(\f, \eta) = \ds \frac{1}{\alpha+\frac{\mu K_m}{2\rho} } \Big(\| \f \|_{L^2(\Omega)}
+  \frac{\mu K_M}{\rho}  \eta +
\frac{\beta}{\rho} {  C_I^3 }  h^{-\frac{d}{2}}\eta^2  \Big), \;\; \eta\in \mathbb R_+. $$}
Then, we are in position to show the relation \eqref{equatt2}. Property \eqref{equat11} allows us to deduce that the term
$\u^{i+1}_{h,0} = \u_h^{i+1} - \u_{h,l}$ is in $X_h$, and verifies
\begin{equation}\label{equat111}
\forall q_h \in M_h, \quad \ds \int_\Omega \nabla q_h \cdot \u^{i+1}_{h,0} \, d\x  = 0.
\end{equation}
We consider the first equation of (\ref{V1hi}) with $\v_h = \u^{i+1}_{h,0} = \u_h^{i+1} - \u_{h,l}$ and we obtain:
\[
\begin{array}{ll}
\medskip
\ds \alpha \int_\Omega (\u_{h}^{i+1} -\u_{h}^i)  \cdot  \u_{h}^{i+1} \, d\x +  \frac{\mu}{\rho} \int_\Omega K^{-1} |\u_{h}^{i+1}|^2 \, d\x + \frac{\beta}{\rho} \int_\Omega |\u_{h}^{i+1}|^3  \, d\x =  \ds \int_\Omega \f \cdot (\u_{h}^{i+1} - \u_{hl}) \, d\x \\
\medskip
\hspace{1cm} \ds + \alpha \int_\Omega (\u_{h}^{i+1} -\u_{h}^i)  \cdot  \u_{h,l} \, d\x +  \frac{\mu}{\rho} \int_\Omega K^{-1} \u_{h}^{i+1} \cdot \u_{h,l}  \, d\x + \frac{\beta}{\rho} \int_\Omega (|\u_h^{i+1}| - |\u^i_h|) |\u_{h}^{i+1}|^2  \, d\x\\
\hspace{2cm} \ds + \frac{\beta}{\rho} \int_\Omega (|\u^i_h| - |\u^{i+1}_h|)\u_{h}^{i+1} \cdot \u_{h,l} \, d\x + \frac{\beta}{\rho} \int_\Omega |\u^{i+1}_h| \,\u_{h}^{i+1} \cdot \u_{h,l} \, d\x.
\end{array}
\]
By using the properties of $K$, the Cauchy-Schwartz inequality
and the relation $a^2 b \le \frac{1}{3} \big( \frac{1}{\delta^3} b^3 + 2 \delta^{3/2} a^3 \big)$ (for any positive real numbers $a$ and $b$),
we get:
\[
\begin{array}{ll}
\medskip
\ds \frac{\alpha}{2} ||\u_{h}^{i+1}||^2_{L^2(\Omega)} - \frac{\alpha}{2} ||\u_{h}^i||^2_{L^2(\Omega)} + \frac{\alpha}{2} ||\u_{h}^{i+1} -\u_{h}^i||^2_{L^2(\Omega)} +  \frac{\mu}{\rho} K_m ||\u_{h}^{i+1}||^2_{L^2(\Omega)} + \frac{\beta}{\rho}  || \u_{h}^{i+1} ||^3_{L^3(\Omega)}\\
\medskip
\hspace{1cm} \le \ds ||\f||_{L^2(\Omega)} ||\u_{h}^{i+1} ||_{L^2(\Omega)} + ||\f||_{L^2(\Omega)} ||\u_{h,l}||_{L^2(\Omega)} + \alpha ||\u_{h}^{i+1} -\u_{h}^i||_{L^2(\Omega)} ||\u_{h,l} ||_{L^2(\Omega)}\\
\medskip
\hspace{1.5cm} + \ds \frac{\mu}{\rho} K_M || \u_{h}^{i+1} ||_{L^2(\Omega)} || \u_{h,l} ||_{L^2(\Omega)} + \frac{\beta}{\rho} || \u^i_h - \u^{i+1}_h ||_{L^3(\Omega)} || \u_{h}^{i+1} ||_{L^3(\Omega)} || \u_{h,l} ||_{L^3(\Omega)}  \\
\hspace{2cm}  \ds  + \frac{\beta}{\rho} || \u^i_h - \u^{i+1}_h ||_{L^3(\Omega)} || \u_{h}^{i+1} ||^2_{L^3(\Omega)} + \frac{\beta}{\rho} ||\u^{i+1}_h ||^2_{L^3(\Omega)}  || \u_{h,l} ||_{L^3(\Omega)}.
\end{array}
\]
We deduce by using the relation \eqref{eq:inversin}, that for any positive numbers $\varepsilon_i,i=1 \dots,4$ and $\delta_j,j=1,2$, we have the following bound:
\[
\begin{array}{ll}
\medskip
\ds \frac{\alpha}{2} ||\u_{h}^{i+1}||^2_{L^2(\Omega)} - \frac{\alpha}{2} ||\u_{h}^i||^2_{L^2(\Omega)} + \frac{\alpha}{2} ||\u_{h}^{i+1} -\u_{h}^i||^2_{L^2(\Omega)} +  \frac{\mu}{\rho} K_m ||\u_{h}^{i+1}||^2_{L^2(\Omega)} + \frac{\beta}{\rho} || \u_{h}^{i+1} ||^3_{L^3(\Omega)}\\
\medskip
\hspace{1cm} \le \ds  \frac{1}{2 \varepsilon_1} ||\f||^2_{L^2(\Omega)} + \frac{1}{2} \varepsilon_1 ||\u_{h}^{i+1} ||^2_{L^2(\Omega)} + \frac{1}{2} ||\f||^2_{L^2(\Omega)} + \frac{1}{2} ||\u_{h,l} ||^2_{L^2(\Omega)}\\
\medskip
\hspace{1.5cm}  \ds + \frac{\alpha}{2 \varepsilon_2}  ||\u_{h}^{i+1} -\u_{h}^i||^2_{L^2(\Omega)} + \frac{\alpha}{2} \varepsilon_2 ||\u_{h,l}||^2_{L^2(\Omega)} + \frac{\mu^2}{2\rho^2 \varepsilon_3} K_M^2 || \u_{h,l} ||^2_{L^2(\Omega)} + \frac{1}{2} \varepsilon_3 || \u_{h}^{i+1} ||^2_{L^2(\Omega)}\\
\medskip
\hspace{1.5cm}  \ds + \frac{\beta^2}{2\rho^2 \varepsilon_4} {  C_I^4} h^{-\frac{2d}{3}}|| \u_{h,l} ||^2_{L^3(\Omega)} || \u^i_h - \u^{i+1}_h ||^2_{L^2(\Omega)} + \frac{1}{2} \varepsilon_4   || \u_{h}^{i+1} ||^2_{L^2(\Omega)} \\
\medskip
\hspace{1.5cm}  \ds +  \frac{\beta}{3\rho} \big(  (\frac{1}{\delta_1})^{3} {  C_I^3 h^{-d/2}} ||\u^{i+1}_h -\u_{h}^i ||^3_{L^2(\Omega)}   +  2 \delta_1^{3/2} || \u_{h}^{i+1} ||^3_{L^3(\Omega)} \big)   \\

\hspace{1.5cm}  \ds + \frac{\beta}{3\rho} \big( (\frac{1}{\delta_2})^{3}  || \u_{h,l} ||^3_{L^3(\Omega)}   +  2 \delta_2^{3/2}||\u^{i+1}_h ||^3_{L^3(\Omega)} \big).
\end{array}
\]
We choose $\varepsilon_1=\varepsilon_3=\varepsilon_4=\ds\frac{\mu K_m}{3\rho}$, $\varepsilon_2=2$, $\delta_1=\delta_2=\ds \big(\frac{1}{2}\big)^{2/3}$ and we denote
$$
C_1 (\| \u_h^i \|_{L^2(\Omega)}) = \ds \frac{\alpha}{4} -  \frac{3\beta^2}{2\rho \mu K_m} {  C_I^4} h^{-\frac{2d}{3}}|| \u_{h,l} ||^2_{L^3(\Omega)}
-  \frac{4 \beta}{3 \rho}  { C_I^3 h^{-d/2} L_2(\f, \| \u_h^i \|_{L^2(\Omega)} }),
$$
which is not necessarily positive at this level.\\
By using the bound \eqref{bound11} , we get the following bound
\begin{equation}\label{relata1}
\begin{array}{ll}
\medskip
\ds \frac{\alpha}{2} ||\u_{h}^{i+1}||^2_{L^2(\Omega)} - \frac{\alpha}{2} ||\u_{h}^i||^2_{L^2(\Omega)} + C_1(\| \u_h^i \|_{L^2(\Omega)}) ||\u_{h}^{i+1} -\u_{h}^i||^2_{L^2(\Omega)} +  \frac{\mu K_m}{2 \rho}||\u_{h}^{i+1}||^2_{L^2(\Omega)} + \frac{\beta}{3\rho} || \u_{h}^{i+1} ||^3_{L^3(\Omega)}\\
\medskip
\hspace{1cm} \le \ds  (\frac{3\rho}{2\mu K_m}  + \frac{1}{2}) ||\f||^2_{L^2(\Omega)} + (\frac{1}{2} + \alpha + {\frac{3\mu K_M^2}{2\rho K_m} } )  ||\u_{h,l} ||^2_{L^2(\Omega)}   +\frac{4\beta}{3\rho} || \u_{h,l} ||^3_{L^3(\Omega)} \\
\hspace{1cm} \le \ds { \frac{\mu K_m}{2\rho } L_1^2(\f,\u_{h,l}, \alpha).}
\end{array}
\end{equation}
We now prove Estimate (\ref{equatt2}) by induction on $i$ under some conditions on $\alpha$. Starting with the relation \eqref{cond1f}, we suppose that we have
\begin{equation}\label{induction1}
|| \u_h^i ||_{L^2(\Omega)} \le \ds L_1(\f,\u_{h,l}, \alpha).
\end{equation}
We are in one of the following two situations :
\begin{itemize}
\item We have $\ds  ||\u_h^{i+1}||_{L^2(\Omega)} \le
||\u_h^i||_{L^2(\Omega)}$. We obviously deduce the bound
\[
||\u_h^{i+1}||_{L^2(\Omega)} \le \ds L_1(\f,\u_{h,l}, \alpha),
\]
from the induction hypothesis.
\item We have $\ds  ||\u_h^{i+1}||_{L^2(\Omega)} \ge
||\u_h^i||_{L^2(\Omega)}$. By using the induction condition \eqref{induction1} and the fact that the function $L_2$ is increasing with respect to $\eta$, we chose
\begin{eqnarray} \label{efg1}
\ds \frac{\alpha}{4} \ge  \ds  \frac{3\beta^2}{2\rho \mu K_m} {  C_I^4} h^{-\frac{2d}{3}}|| \u_{h,l} ||^2_{L^3(\Omega)}
+  \frac{4 \beta}{3 \rho}  {  C_I^3} h^{-d/2}{ L_2(\f, L_1(\f,\u_{h,l},\alpha))}
\end{eqnarray}
and we get
\begin{eqnarray}
\ds \frac{\alpha}{4} \ge   \ds  \frac{3\beta^2}{2\rho \mu K_m} {  C_I^4} h^{-\frac{2d}{3}}|| \u_{h,l} ||^2_{L^3(\Omega)}
+  \frac{4 \beta}{3 \rho}  {  C_I^3} h^{-d/2}   {L_2(\f,||\u_h^i||_{L^2(\Omega)} )},
\end{eqnarray}
which gives $C_1(||\u_h^i||_{L^2(\Omega)} ) \ge 0$ and then 
\[
||\u_h^{i+1}||_{L^2(\Omega)} \le \ds L_1(\f,\u_{h,l},\alpha).
\]
\end{itemize}
whence we deduce the relation \eqref{equatt2}. The bound \eqref{equatt2a} is a simple consequence of Equation \eqref{relata1} and relation \eqref{equatt2}. \\

We now focus on the inequality \eqref{efg1}. It is easy to show 
that, $\forall \eta \in \mathbb R_+$, 
%
\begin{equation}
 \ds L_2(\f, \eta) \leq
 \ds \frac{2 \rho}{\mu K_m} \| \f \|_{L^2(\Omega)^2} +  2 \eta + \frac{\beta}{\rho} C_I^3 h^{-\frac{d}{2}} \frac{\eta^2}{\alpha + \frac{\mu K_m}{2\rho}},
\end{equation}
and then to get by using the definition of $L_1$,
\[
L_2(\f, L_1(\f,\u_{h,l},\alpha)) \le \ds \frac{2 \rho}{\mu K_m} \| \f \|_{L^2(\Omega)^2} +  \frac{2K_M}{K_m} (\sqrt{\ell_0} +\sqrt{\ell_1} \sqrt{\alpha}) + \frac{2\beta}{\rho} C_I^3 h^{-\frac{d}{2}} 
\max(\frac{2\rho \ell_0}{\mu K_m},\ell_1).
\]
Relation \eqref{efg1} and the last inequality allow us to obtain
\begin{equation} \label{nouv324}
  \ds\frac{1}{4}\alpha-  \frac{3\beta^2}{2\rho \mu K_m} {  C_I^4} h^{-\frac{2d}{3}}|| \u_{h,l} ||^2_{L^3(\Omega)}
- \frac{4 \beta}{3 \rho}  {  C_I^3} h^{-\frac{d}{2}} { L_2(\f, L_1(\f,\u_{h,l},\alpha)) \geq \phi(\alpha)},
\end{equation}
{
with
\begin{equation} \label{defphi}
\phi(\alpha) =\frac{1}{4}\alpha-\gamma_1 \sqrt{\alpha}-\gamma_2,\end{equation}
where
\[
\begin{array}{rcl}
\medskip
\gamma_1 &=& \ds \frac{8\beta K_M}{3\rho K_m}   C_I^3 h^{-\frac{d}{2}} \sqrt{\ell_1}, \\ \medskip
\gamma_2 &=& \ds \frac{3\beta^2}{2\rho \mu K_m} {  C_I^4} h^{-\frac{2d}{3}}|| \u_{h,l} ||^2_{L^3(\Omega)}
+ \frac{8 \beta}{3 \mu K_m}  {  C_I^3} h^{-\frac{d}{2}} \| \f \|_{L^2(\Omega)^2}, \\
&& \ds + \frac{8 \beta K_M}{3 \rho K_m}  {  C_I^3} h^{-\frac{d}{2}} \sqrt{\ell_0} +
\frac{8 \beta^2}{3 \rho^2}   {  C_I^6 }    h^{-d}  \max(\frac{2\rho \ell_0}{\mu K_m},\ell_1).
\end{array}
\]

Therefore $\phi(\alpha)$ is a polynomial of second degree with respect to $\sqrt{\alpha}$ and admits the only positive root $\sqrt{\alpha^*} = 2(\gamma_1 + \sqrt{\gamma_1^2+\gamma_2})$. Thus we get that $\phi(\alpha) \ge0$ for all $\alpha \ge \alpha^*.$
}
%
%
$\hfill\Box$\\
\begin{Rem} \label{zero}
Notice  that Theorem \ref{boundu1} indicates that $\alpha^\star$ blows up as $h^{-d}$, and as {$\beta^2$} when respectively $h$ tends to zero, and $\beta$ approaches $+\infty$.
\end{Rem}

%
%

%
%

The next result provide the convergence of the solution $(\u_h^i, p_h^i)$ of Problem $(\ref{V1hi})$ in $L^2(\Omega)^d \times L^2(\Omega)$ to the unique solution
$(\u_h, p_h)$ of  the Problem $(\ref{V1h})$.
{\thm \label{converg1} Assume that there exists $\beta_0 >0$  such that, for every element $\kappa\in {\mathcal T}_h$, we have
\[ h_\kappa \ge \beta_0 h,\]
(which means that the family of triangulations is uniformly regular). Under the assumptions of Theorem $\ref{boundu1}$, and if $\alpha$ satisfies  the condition
\begin{equation}\label{condalpha}
\ds \alpha > \ds \max(\alpha^\star, \alpha^{\star \star} )
\end{equation}
 $\alpha^{\star\star}$ is the largest positive zero of the polynomial  function:
\begin{equation} \label{functiontildephi}
\tilde \phi(\alpha) = -\tilde \gamma_0 -\tilde \gamma_1 \alpha-\tilde \gamma_2 \alpha^2
+\frac{1}{8} \alpha^3,
\end{equation}
where $\tilde \gamma_i(h,\f, \u_{h,l}), i=2, 3, 4$ are strictly positive constants given by 
%
%
\begin{eqnarray*}
\tilde \gamma_0& =& \ell_0^2 \tilde{C}  h^{-2d},\\
\tilde \gamma_1&=& 2\ell_0\ell_1 \tilde{C}  h^{-2d},\\
\tilde \gamma_2 &=& \ell_1^2 \tilde{C}  h^{-2d},\\
\tilde{C} &=&  \frac{9 \beta^4 C_I^{12}}{32\mu K_m \rho^3}
\end{eqnarray*}
then the sequence of solutions $(\u_h^i, p_h^i)$ of Problem $(\ref{V1hi})$ converges in $L^2(\Omega)^d \times L^2(\Omega)$ to the solution $(\u_h, p_h)$ of Problem $(\ref{V1h})$.  }\\

\noindent \textbf {Proof.} We take the difference between the
equations (\ref{V1hi}) and (\ref{V1h}) with $\v_h = \u^{i+1}_h -\u_h$ and we obtain the equation
\[
\begin{array}{rr}
\medskip
\ds \frac{\alpha}{2} || \u_h^{i+1} - \u_h ||^2_{L^2(\Omega)}  - \frac{\alpha}{2} || \u_h^{i} - \u_h ||^2_{L^2(\Omega)} +
\frac{\alpha}{2} || \u_h^{i+1} - \u^i_h ||^2_{L^2(\Omega)} + \frac{\mu}{\rho} \int_\Omega K^{-1} ( \u_h^{i+1} - \u_h)^2 \, d\x \\
+ \ds \frac{\beta}{\rho} (|\u_h^{i}| \u_h^{i+1} - |\u_h| \u_h , \u_h^{i+1} - \u_h) = 0.
\end{array}
\]
The last term in the previous equation, denoted by $T$, can be decomposed
as
\[
T = \ds \frac{\beta}{\rho} ((|\u_h^{i}| - |\u^{i+1}_h|)  \u_h^{i+1}, \u_h^{i+1} - \u_h) + \frac{\beta}{\rho} (|\u_h^{i+1}| \u_h^{i+1} -
|\u_h| \u_h , \u_h^{i+1} - \u_h)= T_1+T_2.
\]
We denote by $T_1$ and $T_2$, respectively  the first and the
second terms in the right-hand side of the  last equation. Using \eqref{prop2}, we have $T_2 \ge 0$. Then we derive by
using  \eqref{equatt2a}, (\ref{eq:inversin}) and \eqref{KmM},
\[
\begin{array}{ll}
\medskip
\ds \frac{\alpha}{2} || \u_h^{i+1} - \u_h ||^2_{L^2(\Omega)}  - \frac{\alpha}{2} || \u_h^{i} - \u_h ||^2_{L^2(\Omega)} +
\frac{\alpha}{2}|| \u_h^{i+1} - \u^i_h ||^2_{L^2(\Omega)} + \frac{K_m \mu}{\rho}  \| \u_h^{i+1} - \u_h \|^2_{L^2(\Omega)} + T_2 \le  | T_1 |\\
\medskip
\hspace{1cm} \le \ds \frac{\beta}{\rho} \int_\Omega |\u_h^{i+1} - \u_h^{i}| \, |\u_h^{i+1}| \, |\u_h^{i+1} - \u_h| d{\textbf x}\\
\medskip
\hspace{1cm} \le \ds \frac{\beta}{\rho} ||\u_h^{i+1} - \u_h^{i} ||_{L^3(\Omega)} \; ||\u_h^{i+1}||_{L^3(\Omega)} \, ||\u_h^{i+1} - \u_h ||_{L^3(\Omega)}\\
\medskip
\hspace{1cm} \le \ds \frac{\beta}{\rho}   C_I^2  h^{-\frac{d}{3}}||\u_h^{i+1} - \u_h^{i} ||_{L^2(\Omega)} \; ||\u_h^{i+1}||_{L^3(\Omega)} \, ||\u_h^{i+1} - \u_h ||_{L^2(\Omega)}\\
\medskip
\hspace{1cm} \le \ds \frac{\beta}{\rho} \big( \frac{3\mu K_m}{2\beta}  \big)^{1/3}  C_I^2 h^{-\frac{d}{3}} L^{2/3}_1(\f,\u_{h,l},\alpha)  ||\u_h^{i+1} - \u_h^{i} ||_{L^2(\Omega)} \, ||\u_h^{i+1} - \u_h ||_{L^2(\Omega)}.
\end{array}
\]
We denote by $C= \ds \frac{\beta}{\rho}  \big( \frac{3\mu K_m}{2\beta}  \big)^{1/3}    C_I^2 
L^{2/3}_1(\f,\u_{h,l},\alpha)$ and we use the inequality $ab \le \ds
\frac{1}{2\varepsilon} a^2 + \frac{\varepsilon}{2} b^2$ (with
$\varepsilon = \ds \frac{K_m \mu}{\rho}$) to obtain the
following bound
\[
\begin{array}{ll}
\medskip
\ds \frac{\alpha}{2} || \u_h^{i+1} - \u_h ||^2_{L^2(\Omega)}  - \frac{\alpha}{2} || \u_h^{i} - \u_h ||^2_{L^2(\Omega)} +
\frac{\alpha}{2} || \u_h^{i+1} - \u^i_h ||^2_{L^2(\Omega)} + \frac{K_m \mu}{2\rho } {  || \u_h^{i+1} - \u_h ||^2_{L^2(\Omega)}} \\
\hspace{2cm} \le \ds \frac{\rho C^2}{2K_m \mu}  h^{-\frac{2d}{3}} || \u_h^{i+1} - \u_h^i ||^2_{L^2(\Omega)}.
\end{array}
\]
We choose
\begin{equation}\label{equat1}
\ds \frac{\alpha}{2} >  \ds \frac{\rho C^2}{2K_m \mu}  h^{-\frac{2d}{3}},
\end{equation}
denote by $C_1 =
\ds \frac{1}{2}(\alpha - \ds \frac{\rho C^2}{K_m \mu}   h^{-\frac{2d}{3}})$  and obtain
\begin{equation}\label{relat22}
\ds \frac{\alpha}{2} || \u_h^{i+1} - \u_h ||^2_{L^2(\Omega)}  - \frac{\alpha}{2} || \u_h^{i} - \u_h ||^2_{L^2(\Omega)} + C_1 ||
\u_h^{i+1} - \u^i_h ||^2_{L^2(\Omega)} + \frac{\mu K_m}{2\rho} {  || \u_h^{i+1} - \u_h ||^2_{L^2(\Omega)} } \le 0.
\end{equation}
We clearly have that \eqref{equat1} is satisfied if
\begin{equation}
    \tilde{\phi}(\alpha)> 0.
\end{equation}
Simple calculation shows that $\tilde  \phi^\prime(\alpha)$ is a polynomial function of degree two that is positive for large $\alpha$  and posses a unique positive zero. Since $\tilde{\phi}(0) <0$, we deduce that $\tilde{\phi}(\alpha)$ vanishes at least once on  $\mathbb R_+$. By denoting $\alpha^{\star\star}$ the largest root of $\tilde{\phi}(\alpha)$, we deduce that $\tilde{\phi}(\alpha) \ge 0$ for $\alpha \ge \alpha^{\star\star}$.
\\

We deduce from \eqref{relat22} that, for all $i\ge 1$, we have (if $|| \u_h^{i} - \u_h ||_{L^2(\Omega)} \neq 0$)

\[
|| \u_h^{i+1} - \u_h ||_{L^2(\Omega)}  < || \u_h^{i} - \u_h
||_{L^2(\Omega)},
\]
and we deduce the convergence of the sequence $(\u_h^{i+1} - \u_h)$ in $L^2(\Omega)^d$ and then the convergence of the sequence $\u_h^{i}$ in $L^2(\Omega)^d$. By taking the limit of (\ref{relat22}) and remarking that $T_2 \ge 0$, we get
\[
\underset{{\small i \rightarrow +\infty} }{\lim} \Big( {  || \u_h^{i+1} - \u_h ||^2_{L^2(\Omega)}}  \Big) \le 0.
\]
We deduce then that ${  || \u_h^{i+1} - \u_h ||_{L^2(\Omega)} }$ converges to $0$ and $\u_h^{i+1}$ converges to $\u_h$ in $L^2(\Omega)^d$.\\
For the convergence of the pressure, we take the difference between the equations (\ref{V1hi}) and (\ref{V1h}) and we obtain for all $\v_h \in X_h$ the equation
\[
\begin{array}{rr}
\medskip
\ds \int_\Omega \nabla (p_h^{i+1} - p_h) \v_h \, d\x = \ds -\alpha \int_\Omega (\u_h^{i+1} - \u^i_h)\v_h \, d\x  + \frac{\mu}{\rho} \int_\Omega K^{-1} ( \u_h - \u_h^{i+1}) \v_h \, d\x \\
\hspace{2cm} \ds  + \frac{\beta}{\rho} ( (|\u_h| - |\u_h^i |) \u_h, \v_h) + \frac{\beta}{\rho} ( |\u^i_h| (\u_h - \u_h^{i+1}) , \v_h).
\end{array}
\]
We get by using the inverse inequality \eqref{eq:inversin} the following:
\[
\begin{array}{rcl}
\medskip
\ds \frac{\ds \Big| \int_\Omega \nabla (p_h^{i+1} - p_h) \v_h \, d\x \Big| }{||\v_h ||_{L^3(\Omega)}} &\le&  \ds (\alpha || \u_h^i - \u_h^{i+1}||_{L^2(\Omega)} + \frac{\mu K_m}{\rho}  || \u_h - \u_h^{i+1}||_{L^2(\Omega)}) \frac{||\v_h||_{L^2(\Omega)}}{||\v_h||_{L^3(\Omega)}} \\
&& \ds  { + \frac{\beta}{\rho}  C_I h^{-\frac{d}{6}}  ||\u_h - \u_h^i||_{L^2(\Omega)} (||\u_h ||_{L^3(\Omega)}  + ||\u_h^i||_{L^3(\Omega)} ).}
\end{array}
\]
Owning the inf-sup condition \eqref{infsuph1}, we deduce the relation
\[
\begin{array}{ll}
\medskip
\ds || \nabla (p_h^{i+1} - p_h) ||_{L^{3/2}(\Omega)} \le  \ds \frac{1}{\beta_u}\Big( \alpha {  |\Omega|^{1/6}} || \u_h^i - \u_h^{i+1}||_{L^2(\Omega)} + {  |\Omega|^{1/6}} \frac{\mu K_m}{\rho}  || \u_h - \u_h^{i+1}||_{L^2(\Omega)}\\
 \hspace{2cm} \ds {  + \frac{\beta}{\rho}  C_I h^{-\frac{d}{6}}  ||\u_h - \u_h^i||_{L^2(\Omega)} (||\u_h ||_{L^3(\Omega)}  + ||\u_h^i||_{L^3(\Omega)} )\Big).}
\end{array}
\]
Thus for a given mesh (given $h$), the strong convergence of $\u_h^i$ to $\u_h$ in $L^2(\Omega)^d$ implies the strong convergence of $\nabla p_h^i$ to $\nabla p_h$ in $L^{\frac{3}{2}}(\Omega)$. Furthermore, the fact that $p_h^i$ and $p_h$ are in the discrete space of $\P_1$ finite elements  $M_h\subset L^2_0(\Omega)$ allows us to deduce the strong convergence of $p_h^i$ to $p_h$ in $L^2(\Omega)$ .$\hfill\Box$

\begin{Rem} \label{zero2}
One can show that 
\begin{align*}
\alpha^{\star\star} \geq \max\left(8\ell_1^2 C_h, 4(\ell_0\ell_1 C_h)^{\frac{1}{2}},
2(\ell_0^2C_h)^{\frac{1}{3}}
\right),   
\end{align*}
where $C_h = \left(\frac{\beta^2 C_I^{4}  }{2K_m \mu\rho}\right)^3 h^{-2d}$. 
Consequently  $\alpha^{\star\star}$ tends to infinity  when $h$ becomes small  or $\beta$ approaches  $+\infty$. 
\end{Rem}

{\rmq The condition $(\ref{cond1f})$  supposes that the initial value of the algorithm is small related to the data $f$. We can for example consider $u_h^0=0$.}

{\rmq \label{remtheoralpha}
Theorems \ref{converg1} and \ref{boundu1} require the conditions $\alpha > \max(\alpha^\star, \alpha^{\star\star})$ and \eqref{condalpha} to get the convergence of the numerical scheme (\ref{V1hi}). This conditions cannot be computed easily in practice, especially for the numerical investigations. In fact, This result of convergence states that for a given mesh (for a given $h$), the iterative solution $(u_h^i, p_h^i)$ converges to $(u_h,p_h)$ when $i \rightarrow +\infty$. 
}
\subsection{{\it A posteriori} error estimates}
\noindent As usual, for {\it a posteriori } error estimates, we introduce the following notations. We denote by
\begin{itemize}
\item $\Gamma_h^i$ the set of edges (when $d=2$) or faces (when
$d=3$) of $\kappa$ that are not contained in $\partial \Omega$.
\item $\Gamma_h^b$  the set of edges (when $d=2$) or faces (when
$d=3$) of $\kappa$ which are contained in $\partial \Omega$.
\end{itemize}
For every element $\kappa$ in  $\mathcal{T}_h$, we denote by $w_\kappa$ the union of elements $K$ of $\mathcal{T}_h$ such that $\kappa \cap K \ne \phi$. Furthermore, for every edge (when $d=2$) or face (when $d=3$) $e$ of the mesh
 $\mathcal{T}_h$, we denote by
\begin{itemize}
\item $\omega_e$ the union of elements of $ \mathcal{T}_h$  adjacent to
$e$.
\item $[\cdot]_e$ the jump through $e\in \Gamma_h^i$. 
\end{itemize}
From now on, to simplify, we set $d=3$.  The extension to two dimensions is straightforward  and simpler. We suppose also that $b \in L^3(\Omega)$ and $g  \in L^3(\Gamma)$.\\

In this and the next  sections, the {\it a posteriori} error estimates are established when the solution is slightly smoother.\\

Let $R_h$ be a Cl\'ement-type interpolation operator \cite{s3}. 
We have the following error estimate: for all $\kappa$ in  $\mathcal{T}_h$, for all $e$ in $\partial \kappa$ and for all $ q \in W^{1,3/2}(\Omega)$,
\begin{equation}\label{estom}
\| q - R_h q \|_{L^{3/2}(\kappa)} \le C_\kappa h_\kappa |q |_{W^{1,3/2}(w_\kappa)}
\end{equation}
and
\begin{equation}\label{estgam}
\| q - R_h q \|_{L^{3/2}(e)} \le C_e h_e^{1/3} |q |_{W^{1,3/2}(w_e)},
\end{equation}
where $C_e$ and $C_\kappa$ are positive constants independent of $h$.
{ \rmq Relations \eqref{estom} and \eqref{estgam} are a direct consequence of the following two properties:
\begin{itemize}
\item for all integers $\ell$, $0\le \ell \le 2$, and for all $p$, $0\le p \le +\infty$, there exists a constant $C$, independent of $h_\kappa$, such that for all $\kappa \in \mathcal{T}_h$ and all function  $q\in W^{\ell,p}(w_\kappa)$, the following inequalities hold (see for instance \cite{belhachimi}, Theorem 1):
\[
||q - R_h q ||_{L^p(\kappa)} \le C h_\kappa^\ell |q|_{W^{\ell,p}(w_\kappa)}
\]
and when $\ell \ge 1$
\[
|q - R_h q |_{W^{1,p}(\kappa)} \le C h_\kappa^{\ell-1} |q|_{W^{\ell,p}(w_\kappa)}.
\]
\item Let $s\in ]0,1[$ and $p >\ds \frac{1}{s}$ with $p\in [1, +\infty[$ or $s=1$ with $p\in [1, +\infty]$. Then there exists $c$, uniform with  respect to the mesh such that the following trace inequality (see \cite{ernguermont}, Lemma 7.2) holds for all $q\in W^{s,p}(\kappa)$ and all $\kappa \in  \mathcal{T}_h$:
\[
||q||_{L^{p}(e)} \le c \big( h_\kappa^{-\frac{1}{p}} ||q||_{L^{p}(\kappa)} + h_\kappa^{s-\frac{1}{p}}  |q|_{W^{s,p}(\kappa)} \big).
\]
\end{itemize}
}
\subsubsection{Upper error bound}
\noindent In order to establish upper bounds, we introduce, on every edge  ($d=2$) or face ($d=3$)  $e$ of the mesh, the function
\begin{equation}
\label{eq:phih}
 \phi_{h,1}^e=
 \left\{
\begin{array}{ccl}
\medskip
\ds \frac{1}{2}\,[\textbf{u}_h^{i+1}\cdot\textbf{n}]_{e} \quad \mbox{if } e \in \Gamma_h^i,\\
 \textbf{u}_h^{i+1}\cdot\textbf{n} - g_h \quad \mbox{if } e  \in \Gamma_h^b,
\end{array}
\right.
\end{equation}
where $g_h$ is an approximation of $g $ which is constant on each $e$.\\

\noindent  A standard calculation shows that  the solutions of the problems \eqref{V1} and \eqref{V1hi}
verify for all $(\v,q) \in X  \times M $  and $(\v_h,q_h) \in X_h \times M_h$:
\begin{equation}\label{1.71}
\begin{array}{ll}
\medskip
 \ds  \frac{\mu}{\rho} \int_\Omega K^{-1} (\u - \u^{i+1}_h) \cdot \v \, d\x +
 \frac{\beta}{\rho} \int_\Omega (|\u| \u - |\u^i_h| \u^{i+1}_h) \cdot \v \, d\x + \ds\int_{\Omega} \nabla  ( p-p_h^{i+1} ) \cdot \textbf{v}\,d\x\\
 \medskip
 \hspace{1cm}  = \ds \sum_{\kappa \in  \mathcal{T}_h } \big[ \int_{\kappa} (-\nabla p_h^{i+1}- {{\alpha (\u_h^{i+1} - \u_h^i)}} - {{
\frac{\mu}{\rho}K^{-1} \u^{i+1}_h }}- \frac{\beta}{\rho} |\u^i_h| \u^{i+1}_h + \f_h) \cdot(\v- \v_h)\,d\x\, \\
\medskip
\hspace{5cm} + \ds \int_{\kappa}
(\f-\f_h)\cdot \v\,d\x\,  + \alpha \ds \int _{\kappa} (\u_h^{i+1} - \u_h^i) \cdot \v \, d\x \big],
\end{array}
\end{equation}

and by using the second equations of the systems \eqref{V1} and \eqref{V1hi}, the fact that $\dd \, \u_h^{i+1}=0$ (as in each element $\kappa$ we have: $\u_h^{i+1} \in \P_0^d(\kappa)$),  and by applying the integration by parts to the term $\ds \int_\Omega \nabla (q-q_h) \cdot \u_h^{i+1} d\x$ and using the definition of $\phi_{h,1}^e$ we get,
\begin{equation}\label{1.72}
\begin{array}{rcl}
\medskip
\ds \int_{\Omega} \nabla q \cdot
(\textbf{u}-\textbf{u}_h^{i+1})\,d\x\,&=&\ds
\int_{\Omega} \nabla q \cdot
\textbf{u}\,d\x - \int_{\Omega} \nabla q_h \cdot \textbf{u}_h^{i+1}\,d\x - \int_{\Omega} \nabla (q - q_h) \cdot \textbf{u}_h^{i+1}\,d\x\\
\medskip
&=& \ds -\sum_{\kappa \in  \mathcal{T}_h }
\Big[\int_{\kappa} (q-q_h) (b - b_h) \,d\x\, + \int_{\kappa} b_h (q-q_h)  \,d\x\\
&& \qquad \qquad \ds +\ds \sum_{e \in \partial \kappa} \int _{e} \phi_{h,1}^e(q-q_h)\,d\s\,\Big]
-  \sum_{e \in \Gamma_h^b} \int _{e} (g_h - g ) (q-q_h) \,d\s,
\end{array}
\end{equation}
where $\f_h$ (resp. $b_h$) is an approximation of $\f$ (resp. $b$) which is constant on each element $\kappa$ of  $\mathcal{T}_h$.\\
From the error equations \eqref{1.71} and \eqref{1.72}, we deduce the following error indicators  for
each  $\kappa \in  \mathcal{T}_h$,
\begin{equation}\label{indicatorss}
\begin{array}{rcl}
\medskip
\ds \eta_{\kappa,i}^{(L)}&=&||\u_h^{i+1}-\u_h^{i}||_{L^2(\kappa)},\\
\medskip
\ds \eta_{\kappa,i}^{(D_1)}&=&\ds \|-\nabla p_h^{i+1} - {{\alpha (\u_h^{i+1} - \u_h^i) - \frac{\mu}{\rho} K^{-1} \u^{i+1}_h}} - 
\frac{\beta}{\rho} |\u^i_h| \u^{i+1}_h + \f_h \, \|_{L^2(\kappa)},
\\
\medskip
\ds \eta_{\kappa,i}^{(D_2)} &=& h_\kappa \|b_h \|_{L^3(\kappa)}  +\ds  \sum_{e \in \partial
\kappa} h_e^{\frac{1}{3}} \|\phi_{h,1}^e\|_{L^3(e)}.
\end{array}
\end{equation} 

The term $h_\kappa \norm{b_h}_{L^3(\kappa)}$ which appears in $\eta_{\kappa,i}^{(D_2)}$ is an indicator since it represents the quantity $h_\kappa(\norm{b_h - \div(\u_h^{i+1})}_{L^3(\kappa)})$ as $\div(\u_h^{i+1})=0$ in each element $\kappa$.\\

In order to establish an {\it a posteriori} error estimate, we need to bound the numerical solution $\u_h^{i+1}$ in $L^6(\Omega)$ which is the subject of the next lemma.
\begin{lem}\label{bornl3}
Let $d=3$ and let the mesh satisfy \eqref{eq:reg}. Under the assumptions of Theorems \ref{thmpriori1}, \ref{boundu1} and \ref{converg1}, and if the exact velocity $\u \in W^{1.6}(\Omega)^d$, there exists an integer $i_0$ depending on $h$ such that for all $i\ge i_0$,  the solution $(\u_h^{i+1},p_h^{i+1})$ of $\eqref{V1hi}$ verifies the following bound:
\begin{equation}\label{b3}
\|\u^{i+1}_h \|_{L^6(\Omega)} \le \widehat{C}(\u,p),
\end{equation}
where $\widehat{C}$ is a constant depending on the exact solution $(\u,p)$ of \eqref{V1}.
\end{lem}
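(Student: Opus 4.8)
The plan is to compare $\u_h^{i+1}$ with a piecewise constant approximation of the exact velocity and to exploit the fact that the inverse inequality \eqref{eq:inversin} in $L^6$ produces exactly one negative power of the mesh size, which will be compensated by an $O(h)$ bound in $L^2$. Let $\pi_h \u \in X_h$ denote the $L^2$-orthogonal projection onto the piecewise constants, so that $(\pi_h\u)|_\kappa = \frac{1}{|\kappa|}\int_\kappa \u \, d\x$. Splitting
\[
\|\u_h^{i+1}\|_{L^6(\Omega)} \le \|\u_h^{i+1} - \pi_h \u\|_{L^6(\Omega)} + \|\pi_h \u\|_{L^6(\Omega)},
\]
the second term is harmless: since $t\mapsto t^6$ is convex, Jensen's inequality applied elementwise gives $\|\pi_h \u\|_{L^6(\Omega)} \le \|\u\|_{L^6(\Omega)}$, and the latter is finite because $\u \in W^{1,6}(\Omega)^d$ embeds into $L^6(\Omega)^d$ (indeed into $L^\infty$ for $d=3$).

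The main work is to bound $\|\u_h^{i+1} - \pi_h \u\|_{L^6(\Omega)}$ independently of $h$. As $\u_h^{i+1} - \pi_h \u$ lies in $X_h$, it is piecewise constant, so I would apply the inverse inequality \eqref{eq:inversin} on each $\kappa$ with $p=6$ and $d=3$ (for which the exponent $d/p - d/2$ equals $-1$), and then invoke the uniform regularity $h_\kappa \ge \beta_0 h$ assumed in Theorem \ref{converg1} to replace $h_\kappa^{-1}$ by $\beta_0^{-1} h^{-1}$. Raising to the sixth power, summing over $\kappa$, and using the elementary discrete embedding $\sum_\kappa a_\kappa^6 \le \big(\sum_\kappa a_\kappa^2\big)^3$ for nonnegative reals, one arrives at
\[
\|\u_h^{i+1} - \pi_h \u\|_{L^6(\Omega)} \le C_I\, \beta_0^{-1}\, h^{-1}\, \|\u_h^{i+1} - \pi_h \u\|_{L^2(\Omega)}.
\]
Thus everything reduces to showing that the $L^2$ error $\|\u_h^{i+1} - \pi_h \u\|_{L^2(\Omega)}$ is of order $h$, with a constant depending only on the exact solution.

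For this I would decompose
\[
\|\u_h^{i+1} - \pi_h \u\|_{L^2(\Omega)} \le \|\u_h^{i+1} - \u_h\|_{L^2(\Omega)} + \|\u_h - \u\|_{L^2(\Omega)} + \|\u - \pi_h \u\|_{L^2(\Omega)},
\]
and control the three terms separately. The discretization error $\|\u_h - \u\|_{L^2(\Omega)} \le C_u h$ is exactly the \emph{a priori} estimate \eqref{priori1} of Theorem \ref{thmpriori1}, whose hypothesis $\u\in W^{1,4}$ follows from $\u\in W^{1,6}$; the projection error obeys the standard bound $\|\u - \pi_h \u\|_{L^2(\Omega)} \le C\, h\, |\u|_{W^{1,2}(\Omega)}$. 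The iteration error $\|\u_h^{i+1} - \u_h\|_{L^2(\Omega)}$ tends to $0$ as $i\to\infty$ for the fixed mesh by Theorem \ref{converg1}, the errors being monotonically decreasing by \eqref{relat22}; hence there is an integer $i_0 = i_0(h)$ such that $\|\u_h^{i+1} - \u_h\|_{L^2(\Omega)} \le h$ for all $i \ge i_0$. Collecting the three bounds gives $\|\u_h^{i+1} - \pi_h \u\|_{L^2(\Omega)} \le C' h$ with $C' = 1 + C_u + C\,|\u|_{W^{1,2}(\Omega)}$, and inserting this into the previous display the factor $h^{-1}$ cancels, yielding $\|\u_h^{i+1} - \pi_h \u\|_{L^6(\Omega)} \le C_I \beta_0^{-1} C'$. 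Adding $\|\u\|_{L^6(\Omega)}$ produces the desired constant $\widehat{C}(\u,p)$.

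The delicate points, rather than any single hard estimate, are bookkeeping ones: one must choose the iteration threshold $i_0$ as a function of $h$ (which is allowed, since convergence is only guaranteed for a fixed mesh), and one must verify that the single negative power $h^{-1}$ coming from the $L^6$ inverse inequality is \emph{exactly} matched by the $O(h)$ behaviour of the $L^2$ error. This matching is what makes the uniform regularity assumption $h_\kappa \ge \beta_0 h$ indispensable, and it is the reason the regularity $\u\in W^{1,6}$ (hence $\u\in L^6$ and $\u\in W^{1,4}$) is precisely the right hypothesis.
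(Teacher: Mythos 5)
Your proof is correct and follows essentially the same route as the paper's: a triangle inequality around a discrete approximation of $\u$, the $L^6$--$L^2$ inverse inequality producing a factor $h^{-1}$ that is cancelled by the $O(h)$ bounds in $L^2$ (the \emph{a priori} estimate \eqref{priori1}, the approximation error of $\u$, and the iteration error, the last controlled by choosing $i_0(h)$ so that $\|\u_h^{i+1}-\u_h\|_{L^2(\Omega)}\le h$). The only difference is cosmetic: the paper compares with the Cl\'ement interpolant $R_h\u$ and uses $\|R_h\u-\u\|_{L^6(\Omega)}\le Ch\|\u\|_{W^{1,6}(\Omega)}$, whereas you compare with the elementwise $L^2$ projection $\pi_h\u$ and get $\|\pi_h\u\|_{L^6(\Omega)}\le\|\u\|_{L^6(\Omega)}$ by Jensen's inequality.
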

\begin{proof} We consider the case $d=3$.  Let $(\u,p)$ be the solution of \eqref{V1}, $(\u_h,p_h)$ the solution of $\eqref{V1h}$ and $(\u_h^{i+1},p_h^{i+1})$ the solution of $\eqref{V1hi}$. \\
By using \eqref{eq:inversin} (for $p=6$), the properties of the operator $R_h$ and the {\it a priori} error estimate \eqref{priori1},
the term $\|\u^{i+1}_h \|_{L^6(\Omega)}$ can be bounded as following:
\begin{equation}\label{equat55}
\begin{array}{rcl}
\medskip
\|\u^{i+1}_h \|_{L^6(\Omega)} &\le& \|\u^{i+1}_h - \u_h\|_{L^6(\Omega)} + \|\u_h - R_h(\u)\|_{L^6(\Omega)}  + \|R_h(\u) - \u  \|_{L^6(\Omega)} + \|\u \|_{L^6(\Omega)} \\
\medskip
&\le & C   \big( h^{-1} (\|\u^{i+1}_h - \u_h\|_{L^2(\Omega)} +  \|\u_h - R_h(\u)\|_{L^2(\Omega)}) + h \| \u \|_{W^{1,6}(\Omega)^6} \big)  + \|\u \|_{L^6(\Omega)}\\
&\le & C   \big( h^{-1} (\|\u^{i+1}_h - \u_h\|_{L^2(\Omega)} +  \|\u_h - \u \|_{L^2(\Omega)} + \| \u -  R_h(\u)\|_{L^2(\Omega)}  \big) + h \| \u \|_{W^{1,6}(\Omega)} \big)  \\ \medskip \medskip
&& \ds  \hspace{.0cm}  + \|\u \|_{L^6(\Omega)}\\
&\le& C ( h^{-1} \|\u^{i+1}_h - \u_h\|_{L^2(\Omega)} + C_u(\u,p) + |\u|_{W^{1,2}(\Omega)} + h  \| \u \|_{W^{1,6}(\Omega)}  + \| \u \|_{L^6(\Omega)} ).
\end{array}
\end{equation}
As  $\u^{i+1}_h$ converges to $\u_h$ in $L^2(\Omega)$, then there exists an integer $i_0$ depending on $h$ such that for all $i\ge i_0$ we have
\begin{equation}\label{equat66}
\| \u^{i+1}_h - \u_h \|_{L^2(\Omega)} \le h.
\end{equation}
Then, Equation \eqref{equat55} gives by using \eqref{equat66} the following bound for all $i\ge i_0$:
\begin{equation}\label{equat77}
\|\u^{i+1}_h \|_{L^6(\Omega)} \le \widehat{C}(\u,p).
\end{equation}
\end{proof}
Our main goal is to get an upper bound of the error between the exact solution $(\u,p)$ of \eqref{V1} and the numerical solution $(\u_h^{i+1},p_h^{i+1})$ of $\eqref{V1hi}$. To get this desired result, we start by the following lemma which can be proved by using the inf-sup condition \eqref{inf-supS1}.
\begin{lem}
There exists a velocity $\v_r$ in $X$ that solves the following equation : $\forall q \in M$
\begin{equation} \begin{aligned} \label{3.30}
\int_{\Omega} \nabla q \cdot \v_r\,d\x &= \sum_{\kappa \in \mathcal{T}_h} \big[ \int_{\kappa}(q-R_h(q))(-b+b_h)\,d\x - \int_{\kappa} b_h(q-R_h(q))\,d\x \\
&- \sum_{e \in \partial \kappa} \int_{e} \phi_{h,1}^e (q-R_h(q))ds \big] - \sum_{e \in \Gamma_h^b}\int_{e}(g_h-g)(q-R_h(q))ds
\end{aligned}
\end{equation}
satisfying the following bound
\begin{equation}
\label{eq:liftq2bdd1}
\begin{array}{rcl}
\medskip
\|\v_r\|_{L^3(\Omega)} &\le& \ds \hat{C}_2\big(  \sum_{\kappa \in  \mathcal{T}_h } \big[ (\eta_{\kappa,i}^{(D_2)}) + h_\kappa || b - b_h||_{L^3(\kappa)} \big] + \sum_{e \in \Gamma_h^b} h_e^{1/3} \| g_h - g  \|_{L^3(e)}  \big).
\end{array}
\end{equation}
\end{lem}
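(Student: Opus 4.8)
The goal is to construct a velocity field $\v_r \in X = L^3(\Omega)^d$ whose weak divergence (tested against $\nabla q$ for all $q \in M$) reproduces exactly the right-hand side in \eqref{3.30}, together with the stability bound \eqref{eq:liftq2bdd1}. The natural tool is the inf-sup condition \eqref{inf-supS1}, which expresses surjectivity of the operator $\v \mapsto (q \mapsto \int_\Omega \nabla q \cdot \v\,d\x)$ from $X$ onto the dual of $M$ (modulo the kernel $V$), with inf-sup constant equal to $1$.

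**Key steps.**
First I would define the linear functional $F: M \to \R$ by letting $F(q)$ be the entire right-hand side of \eqref{3.30}, i.e.
\[
F(q) = \sum_{\kappa \in \mathcal{T}_h}\Big[\int_\kappa (q - R_h q)(-b + b_h)\,d\x - \int_\kappa b_h(q - R_h q)\,d\x - \sum_{e\in\partial\kappa}\int_e \phi_{h,1}^e(q - R_h q)\,ds\Big] - \sum_{e\in\Gamma_h^b}\int_e (g_h - g)(q - R_h q)\,ds.
\]
The plan is then to show $F$ is continuous on $M$ with respect to the norm $\|\nabla q\|_{L^{3/2}(\Omega)}$, and to read off the norm bound $\|F\|_{M'} \le \hat{C}_2(\cdots)$ that will become \eqref{eq:liftq2bdd1}. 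Once continuity is in hand, the inf-sup condition \eqref{inf-supS1} (whose constant is $1$) guarantees a preimage $\v_r \in X$ with $\int_\Omega \nabla q \cdot \v_r\,d\x = F(q)$ for all $q\in M$ and $\|\v_r\|_{L^3(\Omega)} \le \|F\|_{M'}$, which is precisely the claim.

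**Estimating $F$.**
The heart of the argument is the bound on $F(q)$. Each term is estimated by Hölder's inequality (with exponents $3$ and $3/2$) on each element or edge, and the Clément interpolation estimates \eqref{estom}, \eqref{estgam} are applied to the factors $\|q - R_h q\|_{L^{3/2}(\kappa)}$ and $\|q - R_h q\|_{L^{3/2}(e)}$. For the volume terms one gets factors $C_\kappa h_\kappa |q|_{W^{1,3/2}(w_\kappa)}$, producing contributions like $h_\kappa(\|b - b_h\|_{L^3(\kappa)} + \|b_h\|_{L^3(\kappa)})\,|q|_{W^{1,3/2}(w_\kappa)}$; for the edge terms one gets $C_e h_e^{1/3}\|\phi_{h,1}^e\|_{L^3(e)}\,|q|_{W^{1,3/2}(w_e)}$ and the boundary analogue with $\|g_h - g\|_{L^3(e)}$. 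I would then collect these into the indicator $\eta_{\kappa,i}^{(D_2)}$ together with the data-oscillation terms $h_\kappa\|b - b_h\|_{L^3(\kappa)}$ and $h_e^{1/3}\|g_h - g\|_{L^3(e)}$. A discrete Cauchy–Schwarz step over the elements, plus the bounded-overlap property of the patches $w_\kappa$ (finite covering number controlled by the shape-regularity $\sigma$ in \eqref{eq:reg}), converts the sum of products into $\big(\sum_\kappa(\cdots)^{3/2}\big)^{2/3}\,|q|_{W^{1,3/2}(\Omega)}$, and since $|q|_{W^{1,3/2}(\Omega)} = \|\nabla q\|_{L^{3/2}(\Omega)}$ this gives continuity of $F$ with the stated norm $\hat{C}_2(\cdots)$.

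**Main obstacle.**
The routine estimates are not the difficulty; the delicate points are bookkeeping ones. The main obstacle is correctly handling the overlap of the Clément patches $w_\kappa$ and $w_e$ when summing local bounds into a global $\ell^{3/2}$-type sum — one must invoke shape-regularity to bound the number of elements meeting any fixed patch, so that $\sum_\kappa |q|_{W^{1,3/2}(w_\kappa)}^{3/2} \le C\,|q|_{W^{1,3/2}(\Omega)}^{3/2}$ with $C$ depending only on $\sigma$. A secondary subtlety is that the functional $F$ must genuinely annihilate constants (or be well defined on $M\subset L^2_0(\Omega)$), so that applying the inf-sup condition over $M$ is legitimate; this follows because the compatibility of the data and the structure of $\phi_{h,1}^e$ make $F$ vanish on constants. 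With these two points settled, the existence and bound for $\v_r$ follow directly from \eqref{inf-supS1}.
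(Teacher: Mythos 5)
Your proposal is correct and takes essentially the same approach as the paper: both construct $\v_r$ by applying the inf-sup condition \eqref{inf-supS1} to the right-hand-side functional and then bound its dual norm by elementwise H\"older estimates, the Cl\'ement interpolation bounds \eqref{estom}--\eqref{estgam}, and a discrete H\"older sum over elements using mesh regularity (finite overlap of the patches $w_\kappa$). The only differences are cosmetic: the paper gets boundedness of the functional for free by recognizing it, via \eqref{1.72} with $q_h=R_h(q)$, as $q\mapsto\ds\int_\Omega \nabla q\cdot(\u-\u_h^{i+1})\,d\x$, while you prove it directly, and in your summation step the indicator factor should carry the exponent $\big(\sum_\kappa(\cdots)^{3}\big)^{1/3}$ rather than $\big(\sum_\kappa(\cdots)^{3/2}\big)^{2/3}$ --- a harmless slip, since either quantity is dominated by the plain sum appearing in \eqref{eq:liftq2bdd1}.
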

\begin{proof}
Equation \eqref{1.72} with $q_h= R_h(q)$ and the inf-sup condition \eqref{inf-supS1} imply that there exists a $\v_r \in X$ such that \eqref{3.30} is verified and satisfying
\begin{equation}\label{vrr}
\begin{array}{rcl}
\medskip
\|\textbf{v}_r\|_{L^3(\Omega)} &\le& \underset{q \in M }{\sup} \frac{1}{|\nabla q|_{L^{3/2}(\Omega)}} \Big|
\ds \sum_{\kappa \in  \mathcal{T}_h }
\big[\int_{\kappa} (q- R_h(q))( - b + b_h)\,d\x\, -
\int_{\kappa} b_h (q- R_h(q)) \,d\x,\\
\medskip
&& \ds -
\ds \sum_{e \in \partial \kappa} \int _{e} \phi_{h,1}^e(q-R_h(q))\,d\s\,\big]
-  \sum_{e \in \Gamma_h^b} \int _{e} (g_h - g ) (q-R_h(q)) \,d\s\Big|\\
\medskip
&& \hspace{-2cm}  \le  \underset{q \in M }{\sup} \frac{1}{|\nabla q|_{L^{3/2}(\Omega)}} \Big|
\ds \sum_{\kappa \in  \mathcal{T}_h } \|q- R_h(q) \|_{L^{3/2}(\kappa)} \, \| b - b_h \|_{L^3(\kappa)} +  \|q- R_h(q) \|_{L^{3/2}(\kappa)} \, \| b_h \|_{L^3(\kappa)} \\
&& \hspace{-1cm} \ds + \ds \sum_{e \in \partial \kappa} \| \phi_{h,1}^e \|_{L^3(e)} \, \| q-R_h(q) \|_{L^{3/2}(e)} \big]
+  \sum_{e \in \Gamma_h^b} \|g_h - g  \|_{L^3(e)} \| q-R_h(q) \|_{L^{3/2}(e)} \Big|.
\end{array}
\end{equation}
Thus, from the properties of the operator $R_h$, the regularity of $\mathcal{T}_h$ and the following Holder's inequality ($p=3/2$, $q=3$)
\begin{equation}\label{holderr}
\ds \sum_{k=1}^n a_k b_k \le \ds \big(\sum_{k=1}^n a_k^p \big)^{1/p} \big(\sum_{k=1}^n b_k^q \big)^{1/q},
\end{equation}
we infer after cubing the last relation: 
\begin{equation*}
\begin{array}{rcl}
\medskip
\|\textbf{v}_r\|^3_{L^3(\Omega)} &\le& \ds C_2\big(  \sum_{\kappa \in  \mathcal{T}_h } \big[ (\eta_{\kappa,i}^{(D_2)})^3 + h_\kappa^3 || b - b_h||^3_{L^3(\kappa)} \big] + \sum_{e \in \Gamma_h^b} h_e \| g_h - g  \|^3_{L^3(e)}  \big).
\end{array}
\end{equation*}
Finally, we obtain \eqref{eq:liftq2bdd1} by taking the cubic root of the previous inequality.
\end{proof}
%
%

%
%
\begin{thm}
\label{aposterioril2}
Under the assumptions of Lemma \ref{bornl3}, there exists an integer $i_0$ depending on $h$ such that for all $i\ge i_0$,  the solutions $(\u,p)$ of \eqref{V1} and $(\u_h^{i+1},p_h^{i+1})$ of $\eqref{V1hi}$ verify the following error inequalities:
\begin{equation}
\label{eq:uperr2}
\begin{split}
\|\v_r\|_{L^3(\Omega)} + \|\u-\u_h^{i+1}\|_{L^2(\Omega)} + \left\|\u-\u_h^{i+1} \right\|_{L^3(\Omega)^d}^{3/2} \leq C\ds  \Big[ \sum_{\kappa \in  \mathcal{T}_h} \Big( \eta_{\kappa,i}^{(D_1)} + \eta_{\kappa,i}^{(D_2)} + \eta_{\kappa,i}^{(L)} \Big) \\
 \hspace{2cm} + \sum_{\kappa \in  \mathcal{T}_h}  \Big( \|\textbf{f}-\textbf{f}_h\|_{L^2(\kappa)} + h_k \|b - b_h \|_{L^3(\kappa)}  + \sum_{e \in \Gamma_h^b} h_e^{\frac{1}{3}} ||g_h - g ||_{L^3(e)}  \Big)\Big],
\end{split}
\end{equation}
where $ \mathbf{z_0}=\u-\u_h^{i+1}-\v_r$ and $C$ is a constant depending on $\u$, $\f_h$ (resp. $b_h$) is an approximation of $\f$ (resp. $b$) which is constant on each element $\kappa$ of  $\mathcal{T}_h$, and $g_h$ is an approximation of $g$ which is constant on each face $e$ of $\mathcal{T}_h \cap \Gamma$.

\end{thm}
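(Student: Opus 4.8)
The plan is to test the velocity residual identity \eqref{1.71} against the weakly divergence-free part of the velocity error, so that coercivity of $K^{-1}$ and the monotonicity \eqref{prop2} generate a coercive left-hand side, and then to absorb everything else into the indicators of \eqref{indicatorss}.

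First I would check that $\mathbf{z}_0=\u-\u_h^{i+1}-\v_r$ lies in $V$. Choosing $q_h=R_h(q)$ in \eqref{1.72} makes its right-hand side identical to the right-hand side of \eqref{3.30}; subtracting the two identities gives $\int_\Omega\nabla q\cdot\mathbf{z}_0\,d\x=0$ for all $q\in M$, i.e. $\mathbf{z}_0\in V$. I would then set $\v=\mathbf{z}_0$ in \eqref{1.71}. Since $p-p_h^{i+1}\in M$ and $\mathbf{z}_0\in V$, the pressure term drops. Writing $\u-\u_h^{i+1}=\mathbf{z}_0+\v_r$ in the $K^{-1}$ term and splitting
\[
|\u|\u-|\u_h^i|\u_h^{i+1}=\big(|\u|\u-|\u_h^{i+1}|\u_h^{i+1}\big)+\big(|\u_h^{i+1}|-|\u_h^i|\big)\u_h^{i+1},
\]
the coercivity \eqref{prop1} and the monotonicity \eqref{prop2} applied to the pair $\u,\u_h^{i+1}$ produce on the left the coercive quantity $\tfrac{\mu K_m}{\rho}\|\mathbf{z}_0\|_{L^2(\Omega)}^2+c_m\|\u-\u_h^{i+1}\|_{L^3(\Omega)}^3$, the remaining contributions being carried to the right.

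The core of the argument is to bound that right-hand side by the indicators. The interior residual, tested against $\mathbf{z}_0-\v_h$ with $\v_h$ the $L^2(\Omega)$-projection of $\mathbf{z}_0$ onto $X_h$, is controlled by $\sum_\kappa\eta_{\kappa,i}^{(D_1)}\|\mathbf{z}_0\|_{L^2(\kappa)}$ and hence by $C\big(\sum_\kappa\eta_{\kappa,i}^{(D_1)}\big)\|\mathbf{z}_0\|_{L^2(\Omega)}$ after Cauchy--Schwarz over the elements; the oscillation gives $\|\f-\f_h\|_{L^2(\Omega)}\|\mathbf{z}_0\|_{L^2(\Omega)}$ and the explicit term $\alpha\sum_\kappa\eta_{\kappa,i}^{(L)}\|\mathbf{z}_0\|_{L^2(\kappa)}$. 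The two nonlinear remainders are the sensitive ones. For the correction $\tfrac{\beta}{\rho}\int_\Omega(|\u|\u-|\u_h^{i+1}|\u_h^{i+1})\cdot\v_r$ I would use the elementary bound $\big||\v|\v-|\w|\w\big|\le C(|\v|+|\w|)\,|\v-\w|$ together with H\"older to obtain $C(\|\u\|_{L^3}+\|\u_h^{i+1}\|_{L^3})\|\u-\u_h^{i+1}\|_{L^3(\Omega)}\|\v_r\|_{L^3(\Omega)}$, while the linearization remainder $\tfrac{\beta}{\rho}\int_\Omega(|\u_h^{i+1}|-|\u_h^i|)\u_h^{i+1}\cdot\mathbf{z}_0$ is estimated through the H\"older split $\tfrac12+\tfrac16+\tfrac13=1$ by $\tfrac{\beta}{\rho}\|\u_h^{i+1}-\u_h^i\|_{L^2(\Omega)}\|\u_h^{i+1}\|_{L^6(\Omega)}\|\mathbf{z}_0\|_{L^3(\Omega)}$. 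This is exactly where Lemma \ref{bornl3} enters: for $i\ge i_0$ the factor $\|\u_h^{i+1}\|_{L^6(\Omega)}\le\widehat{C}(\u,p)$ is uniformly bounded. Every occurrence of $\v_r$ is finally removed using the bound \eqref{eq:liftq2bdd1} of the preceding lemma, which estimates $\|\v_r\|_{L^3(\Omega)}$ by the indicator sum, and $\|\mathbf{z}_0\|_{L^3(\Omega)}\le\|\u-\u_h^{i+1}\|_{L^3(\Omega)}+\|\v_r\|_{L^3(\Omega)}$ re-expresses the $L^3$ factors in terms of the coercive quantities.

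To finish I would apply Young's inequality, balancing the terms linear in $\|\mathbf{z}_0\|_{L^2(\Omega)}$ against $\|\mathbf{z}_0\|_{L^2(\Omega)}^2$ (conjugate exponents $2,2$) and those linear in $\|\u-\u_h^{i+1}\|_{L^3(\Omega)}$ against $\|\u-\u_h^{i+1}\|_{L^3(\Omega)}^3$ (conjugate exponents $3,\tfrac32$); the exponent $\tfrac32$ on the left-hand side of the statement is dictated precisely by this last absorption. After absorbing the coercive norms, adding $\|\v_r\|_{L^3(\Omega)}$ (already bounded by \eqref{eq:liftq2bdd1}) and reconstructing $\|\u-\u_h^{i+1}\|_{L^2(\Omega)}\le\|\mathbf{z}_0\|_{L^2(\Omega)}+C\|\v_r\|_{L^3(\Omega)}$ yields the asserted inequality. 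I expect the main obstacle to be the two nonlinear remainders: recovering a genuinely coercive cubic term forces the $\v_r$-correction, so that \eqref{prop2} applies to the true error $\u-\u_h^{i+1}$ rather than to $\mathbf{z}_0$, and controlling the linearization mismatch between $|\u_h^i|$ and $|\u_h^{i+1}|$ forces the uniform $L^6$ bound of Lemma \ref{bornl3}, which is the reason the estimate holds only for $i\ge i_0$.
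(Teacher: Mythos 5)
Your overall architecture coincides with the paper's: you establish $\int_\Omega\nabla q\cdot\mathbf{z}_0\,d\x=0$, test \eqref{1.71} with $\v=\mathbf{z}_0$ and $\v_h=\0$, invoke \eqref{prop1} and the monotonicity \eqref{prop2} for the pair $(\u,\u_h^{i+1})$, and control $\v_r$ by \eqref{eq:liftq2bdd1}. The genuine gap is in how you estimate the two nonlinear remainders. You bound $\frac{\beta}{\rho}\int_\Omega(|\u|\u-|\u_h^{i+1}|\u_h^{i+1})\cdot\v_r\,d\x$ by $C(\|\u\|_{L^3}+\|\u_h^{i+1}\|_{L^3})\,\|\u-\u_h^{i+1}\|_{L^3}\,\|\v_r\|_{L^3}$ and then absorb the free factor $\|\u-\u_h^{i+1}\|_{L^3}$ into the cubic term $c_m\|\u-\u_h^{i+1}\|_{L^3}^3$ by Young with exponents $(3,\tfrac32)$; this leaves a term proportional to $\|\v_r\|_{L^3}^{3/2}$ on the right-hand side, and your treatment of the linearization remainder (pushing $\|\mathbf{z}_0\|_{L^3}\le\|\u-\u_h^{i+1}\|_{L^3}+\|\v_r\|_{L^3}$ onto the same cubic term) likewise leaves $(\eta^{(L)})^{3/2}$-type terms. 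After the final square root your estimate therefore reads $\|\mathbf{z}_0\|_{L^2}+\|\u-\u_h^{i+1}\|_{L^3}^{3/2}\le C\,(\eta+\eta^{3/4})$, where $\eta$ denotes the indicator-plus-data sum. Since $\eta^{3/4}/\eta\to+\infty$ as $\eta\to0$, this is strictly weaker than the linear bound \eqref{eq:uperr2} precisely in the regime (fine meshes, converged iterations) where the estimate matters; it does not prove the theorem as stated.

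The paper avoids this by never letting $\|\u-\u_h^{i+1}\|_{L^3}$ or $\|\mathbf{z}_0\|_{L^3}$ appear as a stand-alone linear factor. In \eqref{equat4} the remainder involving $\v_r$ is split as $\int_\Omega|\u|(\u-\u_h^{i+1})\cdot\v_r\,d\x+\int_\Omega(|\u|-|\u_h^{i+1}|)\u_h^{i+1}\cdot\v_r\,d\x$ and estimated by H\"older with exponents $(6,2,3)$, so the error factor is measured in $L^2$ and splits as $\|\mathbf{z}_0\|_{L^2}+\|\v_r\|_{L^2}$; this is exactly where the $L^6$ bounds on $\u$ (from $W^{1,6}$) and on $\u_h^{i+1}$ (Lemma \ref{bornl3}) are needed. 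The linearization remainder is handled by inserting $\pm\u$, and the resulting coefficient $\|\u_h^{i+1}-\u_h^i\|_{L^\infty(\Omega)}$ multiplying $\|\mathbf{z}_0\|_{L^2}^2$ is made smaller than $\frac{\mu K_m}{2\beta}$ for $i\ge i_0$ via an inverse inequality and the convergence $\u_h^i\to\u_h$ --- a second, distinct reason for the threshold $i_0$ that your proposal omits. In this way only the quadratic term $\|\mathbf{z}_0\|_{L^2}^2$ is ever used for absorption, the right-hand side stays quadratic in the indicators, and the cubic term is carried along untouched; the exponent $\tfrac32$ in \eqref{eq:uperr2} then comes from taking the square root of $c_m\|\u-\u_h^{i+1}\|_{L^3}^3$ in \eqref{relat11}, not, as you assert, from a Young absorption into it. To repair your argument you must replace your $(3,3,3)$ H\"older splittings by the $(6,2,3)$ ones.
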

\begin{proof}
The velocity error equation \eqref{1.71} can be written as
\begin{equation}\label{eq:velocierr2}
\begin{array}{ll}
\medskip
 \ds  \frac{\mu}{\rho} \int_\Omega K^{-1} (\u - \u^{i+1}_h) \cdot \v \, d\x +
 \frac{\beta}{\rho} \int_\Omega (|\u| \u - |\u^{i+1}_h| \u^{i+1}_h) \cdot \v \, d\x
  + \ds\int_{\Omega} \nabla  ( p-p_h^{i+1} ) \cdot \textbf{v}\,d\x \\
 \medskip
 \ds = -  \frac{\beta}{\rho}  \int_\Omega ((|\u^{i+1}_h|-|\u^i_h|) \u^{i+1}_h) \cdot \v \, d\x + \ds  \alpha \int_\Omega (\u_h^{i+1} - \u_h^{i}) \cdot \v \, d\x \\
 \medskip
 + \ds \sum_{\kappa \in \mathcal{T}_h } \big[ \int_{\kappa} (-\nabla p_h^{i+1}-{{
 \frac{\mu}{\rho}K^{-1} \u_h^{i+1}}}-
 \alpha (\u_h^{i+1}-\u_h^i)-\frac{\beta}{\rho}|\u_h^i|\u_{h}^{i+1}+\f_h)\cdot (\v-\v_h)\,d\x + \int_{\kappa}(\f-
 \f_h) \cdot \v \,d\x
 \big].
\end{array}
\end{equation}
\noindent Now, to simplify we set $\textbf{z}_0 = \textbf{u}-\textbf{u}_h^{i+1}- \textbf{v}_r$ and we test \eqref{eq:velocierr2} with $\textbf{v}=\textbf{z}_0$ and $\textbf{v}_h ={\bf 0}$. \\
\noindent By construction, \eqref{3.30} and (\ref{1.72}) imply that we have with $q_h=R_hq$
\begin{equation}\label{perpond}
\mbox{for all } q\in M, \qquad \qquad  
\int_\Omega \nabla q  \cdot \textbf{z}_0\,d\x =0.
\end{equation}
Hence \eqref{eq:velocierr2} reduces to
\begin{equation} \begin{aligned} \label{upbdvit5}
 \frac{\mu}{\rho} & \int_{\Omega} K^{-1}\mathbf{z_0} \cdot \mathbf{z_0} \,d\x +
 \frac{\mu}{\rho} \int_{\Omega}K^{-1}\v_r \cdot \mathbf{z_0} \,d\x \\
 &+ \frac{\beta}{\rho} \int_{\Omega} ( |\u|\u - |\u_h^{i+1}|\u_h^{i+1}| ) ( \u - \u_h^{i+1})\,d\x
 -\frac{\beta}{\rho}\int_{\Omega} ( |\u|\u - |\u_h^{i+1}|\u_h^{i+1})\cdot \v_r \,d\x \\
 &= \alpha\int_{\Omega} (\u_h^{i+1}-\u_h^i)\cdot \mathbf{z_0} \,d\x -
 \frac{\beta}{\rho} \int_{\Omega} \big( ( |\u_h^{i+1}|-|\u_h^i| ) \u_h^{i+1} \big) \cdot \mathbf{z_0} \,d\x \\
 &+ \sum_{\kappa \in \mathcal{T}_h} \Big[
\int_{\kappa} \big( - \nabla p_h^{i+1} -\alpha (\u_h^{i+1}-\u_h^i) - \frac{\mu}{\rho} K^{-1}\u_h^{i+1}- \frac{\beta}{\rho} |\u_h^i|\u_h^{i+1}+\f_h \big) \cdot \mathbf{z_0}\,d\x 
+\int_{\kappa} \big(\f-\f_h\big)\cdot \mathbf{z_0} \,d\x\Big].
\end{aligned}     
 \end{equation}
%
%
We decompose the fourth term of the left hand side as following
\[
\ds \frac{\beta}{\rho} \int_\Omega (|\u| \u - |\u^{i+1}_h| \u^{i+1}_h) \cdot \v_r \, d\x = \ds \frac{\beta}{\rho} \int_\Omega |\u| (\u - \u^{i+1}_h) \cdot \v_r \, d\x + \frac{\beta}{\rho} \int_\Omega (|\u| - |\u^{i+1}_h|) \u^{i+1}_h \cdot \v_r \, d\x.
\]
Equation \eqref{upbdvit5} gives
by inserting $\pm \u$ in the second term of the right hand side and by using Property \ref{monoticity}, the following inequality
\begin{equation}\label{equat3}
\begin{array}{ll}
\medskip
 \ds  \frac{\mu}{\rho} \int_\Omega K^{-1} \textbf{z}_0  \cdot \textbf{z}_0 \, d\x +  c_m \left\|\u-\u_h^{i+1} \right\|_{L^3(\Omega)^d}^3 \le \\
 \medskip
 \ds  \frac{\mu}{\rho} \int_\Omega | K^{-1} \textbf{v}_r| \, |\textbf{z}_0|  \, d\x + \frac{\beta}{\rho} \int_\Omega |\u| \, |\u - \u^{i+1}_h| \, |\v_r| \, d\x + \frac{\beta}{\rho} \int_\Omega |\u - \u^{i+1}_h|\, |\u^{i+1}_h |\,  |\v_r| \, d\x \\
 \medskip
 \ds  + \alpha \int_\Omega |\u_h^{i+1} - \u_h^{i}| \, |\textbf{z}_0| \, d\x  +  \frac{\beta}{\rho} \int_\Omega |\u^{i+1}_h-\u^i_h| \, |\u^{i+1}_h - \u| \,  |\textbf{z}_0|   \, d\x +   \frac{\beta}{\rho} \int_\Omega |\u^{i+1}_h-\u^i_h| \, | \u| \,  |\textbf{z}_0|   \, d\x  \\
 \medskip
 + \ds \sum_{\kappa \in  \mathcal{T}_h } \big[ \int_{\kappa} |-\nabla p_h^{i+1}-\frac{\mu}{\rho}
 K^{-1} \u^{i+1}_h - \alpha (\u_h^{i+1} - \u_h^{i}) - \frac{\beta}{\rho} |\u^i_h| \u^{i+1}_h + \f_h|\, \, |\textbf{z}_0| \,d\x\, + \ds \int_{\kappa} |\f-\f_h| \, |\textbf{z}_0| \,d\x\,\big].
\end{array}
\end{equation}
By using the relation $\u - \u_h^{i+1} = \textbf{z}_0 + \v_r$ and taking into account that $W^{1,6}(\Omega) \subset L^\infty (\Omega)$, the last bound allows us to obtain the following inequality
\begin{equation}\label{equat4}
\begin{array}{ll}
\medskip
 \ds  \frac{\mu K_m}{\rho} \| \textbf{z}_0\|^2_{L^2(\Omega)} + c_m \left\|\u-\u_h^{i+1} \right\|_{L^3(\Omega)^d}^3 \le \ds \frac{\mu K_M}{\rho} \| \textbf{v}_r \|_{L^2(\Omega)} \, \| \textbf{z}_0 \|_{L^2(\Omega)} \\
 \medskip
 \ds + \frac{\beta}{\rho} \| \u \|_{L^6(\Omega)} (\| \textbf{z}_0 \|_{L^2(\Omega)} + \| \v_r \|_{L^2(\Omega)}) \| \v_r \|_{L^3(\Omega)} \\
 \medskip
 + \ds \frac{\beta}{\rho} (\| \v_r \|_{L^2(\Omega)}  + \| \textbf{z}_0 \|_{L^2(\Omega)})
 \|\u^{i+1}_h \|_{L^6(\Omega)} \, \| \v_r \|_{L^3(\Omega)} \ds + \ds \alpha \| \u_h^{i+1} - \u_h^{i} \|_{L^2(\Omega)} \, \| \textbf{z}_0 \|_{L^2(\Omega)} \\
 \medskip
 \ds + \frac{\beta}{\rho} \| \u^{i+1}_h-\u^i_h \|_{L^\infty (\Omega)}   \,
 (\| \v_r \|_{L^2(\Omega)}  + \| \textbf{z}_0 \|_{L^2(\Omega)})
  \,  \|\textbf{z}_0\|_{L^2(\Omega)}  + \frac{\beta}{\rho} \| \u^{i+1}_h-\u^i_h \|_{L^2(\Omega)}   \, \|\u \|_{L^\infty (\Omega)} \,  \|\textbf{z}_0\|_{L^2(\Omega)}\\
 \medskip
 + \ds \sum_{\kappa \in  \mathcal{T}_h } \|-\nabla p_h^{i+1}-\frac{\mu}{\rho} K^{-1} \u^{i+1}_h - \alpha (\u_h^{i+1} - \u_h^{i}) - \frac{\beta}{\rho} |\u^i_h| \u^{i+1}_h + \f_h \|_{L^2(\kappa)} \, \|\textbf{z}_0\|_{L^2(\kappa)}\\
 \medskip
 + \ds \sum_{\kappa \in  \mathcal{T}_h } \| \f-\f_h \|_{L^2(\kappa)}  \, \|\textbf{z}_0\|_{L^2(\kappa)}.
\end{array}
\end{equation}
Following Lemma \ref{bornl3}, there exists an integer $i_0$ depending on $h$ such that for all $i\ge i_0$, $\u_h^{i+1}$ is bounded in $L^6(\Omega)$. { Furthermore, we use following inverse inequality
\[
\|\u_h^{i+1} - \u_h^i \|_{L^\infty (\Omega)} \le h^{-d/2} \|\u_h^{i+1} - \u_h^i \|_{L^2 (\Omega)}
\]
and the convergence of the sequence $\u_h^i$ to $\u_h$ in $L^2(\Omega)^d$ to deduce that we can choose  $i_0$ sufficiently large such that for all $i\ge i_0$ we have $\|\u_h^{i+1} - \u_h^i \|_{L^\infty (\Omega)} \le \ds \frac{\mu K_m}{2 \beta}$.} Thus, Equation \ref{equat4} gives:
\begin{equation}\label{equat44}
\begin{array}{ll}
\medskip
 \ds  \frac{\mu K_m}{2\rho} \| \textbf{z}_0\|^2_{L^2(\Omega)} + c_m  \left\|\u-\u_h^{i+1} \right\|_{L^3(\Omega)^d}^3 \le \ds \frac{\mu K_M}{\rho} \| \textbf{v}_r \|_{L^2(\Omega)} \, \| \textbf{z}_0 \|_{L^2(\Omega)} \\
 \medskip
 \ds   +  \frac{\beta}{\rho} \| \u \|_{L^6(\Omega)} (\| \textbf{z}_0 \|_{L^2(\Omega)} + \| \v_r \|_{L^2(\Omega)}) \| \v_r \|_{L^3(\Omega)} \\
 \medskip
 + \ds \frac{\beta}{\rho} (\| \v_r \|_{L^2(\Omega)}  + \| \textbf{z}_0 \|_{L^2(\Omega)})
 \|\u^{i+1}_h \|_{L^6(\Omega)} \, \| \v_r \|_{L^3(\Omega)} \ds + \ds \alpha \| \u_h^{i+1} - \u_h^{i} \|_{L^2(\Omega)} \, \| \textbf{z}_0 \|_{L^2(\Omega)} \\
 \medskip
 \ds + \frac{\mu K_m}{2\rho} \, \| \v_r \|_{L^2(\Omega)}
  \,  \|\textbf{z}_0\|_{L^2(\Omega)}
 + \frac{\beta}{\rho} \| \u^{i+1}_h-\u^i_h \|_{L^2(\Omega)}   \, \|\u \|_{L^\infty(\Omega)}  \,  \|\textbf{z}_0 \|_{L^2(\Omega)}
 + \ds \sum_{\kappa \in  \mathcal{T}_h } \| \f-\f_h \|_{L^2(\kappa)}  \, \|\textbf{z}_0\|_{L^2(\kappa)} \\
 \medskip
 + \ds \sum_{\kappa \in  \mathcal{T}_h } \|-\nabla p_h^{i+1}- {\frac{\mu}{\rho}}
 K^{-1} \u^{i+1}_h - \alpha (\u_h^{i+1} - \u_h^{i}) -\frac{\beta}{\rho} |\u^i_h| \u^{i+1}_h + \f_h \|_{L^2(\kappa)} \, \|\textbf{z}_0\|_{L^2(\kappa)}.
\end{array}
\end{equation}
We use Lemma \ref{bornl3} and the decomposition $a b \le \ds  \frac{1}{2\varepsilon} a^2 + \frac{1}{2} \varepsilon b^2$ for all the terms containing $b=\| \textbf{z}_0 \|_{L^2(\Omega)}$  in the right hand side of Equation \eqref{equat44}, with $\varepsilon$ sufficiently small such that all the terms of $\| \textbf{z}_0 \|_{L^2(\Omega)}$ in the right hand side will be absorbed by the term $\ds \frac{\mu K_m}{2\rho} \| \textbf{z}_0\|^2_{L^2(\Omega)}$ of the left hand side of \eqref{equat44}. We then get, by using the inequality $\| \v_r \|_{L^2(\Omega)} \le |\Omega |^{1/6} \| \v_r \|_{L^3(\Omega)}$ and by taking the square root of the inequality, the bound
\begin{equation}\label{relat11}
\begin{array}{ll}
\medskip
\ds \| \textbf{z}_0\|_{L^2(\Omega)} + \left\|\u-\u_h^{i+1} \right\|_{L^3(\Omega)^d}^{3/2} \le  \ds \overline{C} \Big( \| \v_r \|_{L^3(\Omega)}  + \sum_{\kappa \in  \mathcal{T}_h } \| \f-\f_h \|_{L^2(\kappa)} + \sum_{\kappa \in  \mathcal{T}_h } \| \u_h^{i+1} - \u_h^i \|_{L^2(\kappa)} \\
 \qquad \qquad \qquad \qquad  + \ds  \sum_{\kappa \in  \mathcal{T}_h } \|-\nabla p_h^{i+1}-{\frac{\mu}{\rho}}
K^{-1} \u^{i+1}_h - \alpha (\u_h^{i+1} - \u_h^{i}) -\frac{\beta}{\rho} |\u^i_h| \u^{i+1}_h + \f_h \|_{L^2(\kappa)} \Big),
\end{array}
\end{equation}
where $\overline{C}$ is a constant depending on $(\u,p)$. 
By using 
Relations \eqref{eq:liftq2bdd1} and \eqref{relat11}, and the following inequality:
\begin{equation*}
\begin{array}{rcl}
\medskip
\| \u - \u_h^{i+1} \|_{L^2(\Omega)} &\leq& \|\textbf{z}_0\|_{L^2(\Omega)} + \|\v_r\|_{L^2(\Omega)}\\
 &\leq& \|\textbf{z}_0\|_{L^2(\Omega)} + |\Omega|^{1/6}\|\v_r\|_{L^3(\Omega)}
\end{array}
\end{equation*}
we get the desired result.
\end{proof}
{\rmq Theorem \ref{aposterioril2} gives an upper bound for the error $\u - \u_h^{i+1}= \textbf{z}_0 + \v_r$ in $L^2(\Omega)^d$ and an upper bound of $\v_r$ in $X$ which is a part of $\u-\u_h^{i+1}$. Furthermore, it gives an upper bound of $\| \u - \u_h^{i+1} \|_{L^3(\Omega)}$ but unfortunately, with the indicators to the power of $2/3$. \\
%
}

\noindent In the next theorem, we will bound the error between the gradient of the exact and numerical pressures with respect to the indicators in $L^{3/2}(\Omega)^d$. 
%
%
%
%
\begin{thm}\label{aposterioril3} Under the assumptions of Theorem \ref{aposterioril2} and we assume that $\f \in L^2(\Omega)^d$, then there exists a positive real number $i_1$ depending on $h$ such that for all $i\ge i_1$ we have the following bound between the solutions $(\u,p)$ of \eqref{V1} and $(\u_h^{i+1},p_h^{i+1})$ of $\eqref{V1hi}$:
\begin{equation}
\label{eq:uperr3}
\begin{array}{rcl}
||\nabla(p - p_h^{i+1}) ||_{L^{3/2}(\Omega)} &\leq& C\ds  \Big[ \sum_{\kappa \in  \mathcal{T}_h} \Big( \eta_{\kappa,i}^{(D_1)} + \eta_{\kappa,i}^{(D_2)} + \eta_{\kappa,i}^{(L)} \Big) \\
&& + \ds \sum_{\kappa \in  \mathcal{T}_h}  \Big( \|\textbf{f}-\textbf{f}_h\|_{L^2(\kappa)} + h_k \|b - b_h \|_{L^3(\kappa)}  + \sum_{e \in \Gamma_h^b} h_e^{\frac{1}{3}} ||g_h - g ||_{L^3(e)}  \Big)\Big],
\end{array}
\end{equation}
where $C$ is a constant depending on $(\u,p)$.
\end{thm}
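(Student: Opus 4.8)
The plan is to read the pressure error out of the momentum residual equation \eqref{eq:velocierr2} and then invoke the inf-sup condition \eqref{inf-supS1}, reducing everything to velocity errors that Theorem \ref{aposterioril2} already controls. First I would take \eqref{eq:velocierr2} with an arbitrary $\v\in X$ and the discrete test function $\v_h=\0$, and isolate the pressure gradient:
\[
\int_\Omega\nabla(p-p_h^{i+1})\cdot\v\,d\x = -\frac{\mu}{\rho}\int_\Omega K^{-1}(\u-\u_h^{i+1})\cdot\v\,d\x-\frac{\beta}{\rho}\int_\Omega\bigl(|\u|\u-|\u_h^{i+1}|\u_h^{i+1}\bigr)\cdot\v\,d\x+\mathcal{R}(\v),
\]
where $\mathcal{R}(\v)$ collects the linearization term $\frac{\beta}{\rho}\int_\Omega(|\u_h^{i+1}|-|\u_h^i|)\u_h^{i+1}\cdot\v\,d\x$, the relaxation term $\alpha\int_\Omega(\u_h^{i+1}-\u_h^i)\cdot\v\,d\x$, the element residual $\sum_\kappa\int_\kappa(-\nabla p_h^{i+1}-\frac{\mu}{\rho}K^{-1}\u_h^{i+1}-\alpha(\u_h^{i+1}-\u_h^i)-\frac{\beta}{\rho}|\u_h^i|\u_h^{i+1}+\f_h)\cdot\v\,d\x$, and the data term $\sum_\kappa\int_\kappa(\f-\f_h)\cdot\v\,d\x$. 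Since $p-p_h^{i+1}\in M$, the inf-sup condition \eqref{inf-supS1} gives
\[
\|\nabla(p-p_h^{i+1})\|_{L^{3/2}(\Omega)}\le\sup_{\v\in X}\frac{1}{\|\v\|_{L^3(\Omega)}}\Big|\int_\Omega\nabla(p-p_h^{i+1})\cdot\v\,d\x\Big|,
\]
so it suffices to bound the right-hand side above by a constant times $\|\v\|_{L^3(\Omega)}$.

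Next I would estimate each contribution so as to extract a factor $\|\v\|_{L^3(\Omega)}$. For the two linear terms, the bound \eqref{KmM} on $K^{-1}$ with H\"older's inequality and the embedding $\|w\|_{L^{3/2}(\Omega)}\le|\Omega|^{1/6}\|w\|_{L^2(\Omega)}$ give control by $\|\u-\u_h^{i+1}\|_{L^2(\Omega)}\|\v\|_{L^3(\Omega)}$ and $\alpha\|\u_h^{i+1}-\u_h^i\|_{L^2(\Omega)}\|\v\|_{L^3(\Omega)}$, the latter dominated by $\alpha\sum_\kappa\eta_{\kappa,i}^{(L)}$. For the genuinely nonlinear term I would use the elementary pointwise inequality $\bigl||\u|\u-|\u_h^{i+1}|\u_h^{i+1}\bigr|\le C(|\u|+|\u_h^{i+1}|)\,|\u-\u_h^{i+1}|$ and H\"older with exponents $(6,2,3)$, producing $(\|\u\|_{L^6(\Omega)}+\|\u_h^{i+1}\|_{L^6(\Omega)})\|\u-\u_h^{i+1}\|_{L^2(\Omega)}\|\v\|_{L^3(\Omega)}$; here the $L^6$ bound on $\u_h^{i+1}$ from Lemma \ref{bornl3} (valid for $i\ge i_0$) is what keeps the constant finite. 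The linearization piece of $\mathcal{R}$ is handled the same way, with $\bigl||\u_h^{i+1}|-|\u_h^i|\bigr|\le|\u_h^{i+1}-\u_h^i|$ and H\"older $(2,6,3)$, giving $\|\u_h^{i+1}-\u_h^i\|_{L^2(\Omega)}\|\u_h^{i+1}\|_{L^6(\Omega)}\|\v\|_{L^3(\Omega)}$. Finally, for the element residual and data terms I would apply Cauchy-Schwarz on each $\kappa$, the local embedding $\|\v\|_{L^2(\kappa)}\le|\kappa|^{1/6}\|\v\|_{L^3(\kappa)}\le C h_\kappa^{1/2}\|\v\|_{L^3(\kappa)}$, and a discrete H\"older inequality with exponents $(3/2,3)$ to reassemble $\|\v\|_{L^3(\Omega)}$; since $\ell^{3/2}\le\ell^1$ and $h_\kappa\le\mathrm{diam}(\Omega)$, this leaves exactly $\sum_\kappa\eta_{\kappa,i}^{(D_1)}$ and $\sum_\kappa\|\f-\f_h\|_{L^2(\kappa)}$, the surplus powers of $h_\kappa$ being absorbed into $C$.

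Collecting these bounds yields
\[
\|\nabla(p-p_h^{i+1})\|_{L^{3/2}(\Omega)}\le C\Big(\|\u-\u_h^{i+1}\|_{L^2(\Omega)}+\sum_\kappa\eta_{\kappa,i}^{(L)}+\sum_\kappa\eta_{\kappa,i}^{(D_1)}+\sum_\kappa\|\f-\f_h\|_{L^2(\kappa)}\Big),
\]
with $C$ depending on $(\u,p)$ and $\Omega$. It then remains only to replace $\|\u-\u_h^{i+1}\|_{L^2(\Omega)}$ by the indicators, which is precisely the statement of Theorem \ref{aposterioril2}; combining the two inequalities gives the announced estimate, with $i_1$ taken equal to the integer $i_0$ of Lemma \ref{bornl3}, enlarged if necessary so that the hypotheses of Theorem \ref{aposterioril2} are met.

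I expect the main obstacle to be the nonlinear term: one must split $|\u|\u-|\u_h^i|\u_h^{i+1}$ into the ``true'' difference $|\u|\u-|\u_h^{i+1}|\u_h^{i+1}$, controlled by $\|\u-\u_h^{i+1}\|_{L^2(\Omega)}$ via the elementary continuity bound and the $L^6$ estimate of Lemma \ref{bornl3}, and a linearization remainder measured by $\eta_{\kappa,i}^{(L)}$. A secondary technical point is the norm mismatch in the residual and data terms, where the $L^2$-based indicators are paired with an $L^3$ test function; handling this cleanly is exactly what the local embedding with the $h_\kappa^{1/2}$ factor followed by the discrete H\"older step accomplishes.
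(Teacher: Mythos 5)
Your proposal is correct and follows essentially the same route as the paper's proof: test the momentum error equation with $\v_h=\0$, isolate $\int_\Omega\nabla(p-p_h^{i+1})\cdot\v\,d\x$, bound every term by H\"older arguments with exponents $(2,6,3)$ using the $L^6$ bounds of Lemma \ref{bornl3}, recover the $L^{3/2}$ norm via the inf-sup condition \eqref{inf-supS1}, and conclude with Theorem \ref{aposterioril2}. The only (immaterial) difference is the bookkeeping of the nonlinear term: you split it as $(|\u|\u-|\u_h^{i+1}|\u_h^{i+1})+(|\u_h^{i+1}|-|\u_h^i|)\u_h^{i+1}$, whereas the paper splits $|\u|\u-|\u_h^i|\u_h^{i+1}=(|\u|-|\u_h^i|)\u+|\u_h^i|(\u-\u_h^{i+1})$ and then uses the triangle inequality $\|\u-\u_h^i\|_{L^2(\Omega)}\le\|\u-\u_h^{i+1}\|_{L^2(\Omega)}+\|\u_h^{i+1}-\u_h^i\|_{L^2(\Omega)}$; both yield the same final estimate.
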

\begin{proof}
Let $(\u,p)$ and $(\u_h^{i+1},p_h^{i+1})$ the solutions of \eqref{V1} and  \eqref{V1hi}. We test Equation \eqref{1.71} with
$\textbf{v}_h ={\bf 0}$ to get,
\begin{equation}\label{eqqq1}
\begin{array}{ll}
\medskip
\ds\int_{\Omega} \nabla  ( p-p_h^{i+1} ) \cdot \textbf{v}\,d\x =  \ds -  \frac{\mu}{\rho} \int_\Omega K^{-1} (\u - \u^{i+1}_h) \cdot \v \, d\x -  \frac{\beta}{\rho} \int_\Omega (|\u| \u - |\u^i_h| \u^{i+1}_h) \cdot \v \, d\x  \\
 \medskip
 \hspace{1cm}  + \ds \sum_{\kappa \in  \mathcal{T}_h } \big[ \int_{\kappa} (-\nabla p_h^{i+1}- \alpha (\u_h^i - \u_h^{i+1}) - \frac{\mu}{\rho}K^{-1} \u^{i+1}_h - \frac{\beta}{\rho} |\u^i_h| \u^{i+1}_h + \f_h) \cdot \v \,d\x\, \\
\medskip
\hspace{5cm} + \ds \int_{\kappa}
(\f-\f_h)\cdot \v\,d\x\,  + \alpha \ds \int _{\kappa} (\u_h^{i+1} - \u_h^i) \cdot \v \, d\x \big].
\end{array}
\end{equation}
By using the Cauchy-Schwartz inequality, we get after dividing the previous inequality by $||\v||_{L^3(\Omega)}$:
\begin{equation}\label{equ1}
\begin{array}{ll}
\medskip
\ds \frac{\ds \Big| \int_\Omega \nabla (p_h^{i+1} - p) \v \, d\x \Big| }{||\v ||_{L^3(\Omega)}} \le  C \ds ( || \u - \u_h^{i+1}||_{L^2(\Omega)} +  || \u_h^i - \u_h^{i+1}||_{L^2(\Omega)}) \frac{||\v||_{L^2(\Omega)}}{||\v||_{L^3(\Omega)}} \\
 \ds  + C_1 \Big( \sum_{\kappa \in \mathcal{T}_h} ||\f - \f_h||^2_{L^2(\kappa)} \Big)^{1/2}
 \frac{||\v||_{L^2(\Omega)}}{||\v||_{L^3(\Omega)}} + \frac{\beta}{\rho} \Big| \int_\Omega (|\u| \u - |\u^i_h| \u^{i+1}_h) \cdot \v \, d\x   \Big| \frac{1}{||\v||_{L^3(\Omega)}}\\
 + \ds C_2 \Big( \sum_{\kappa \in \mathcal{T}_h} || -\nabla p_h^{i+1}- \alpha (\u_h^i - \u_h^{i+1}) - \frac{\mu}{\rho}K^{-1} \u^{i+1}_h - \frac{\beta}{\rho} |\u^i_h| \u^{i+1}_h + \f_h ||^2_{L^2(\kappa)}  \Big)^{1/2} \frac{||\v||_{L^2(\Omega)}}{||\v||_{L^3(\Omega)}}.
\end{array}
\end{equation}
By using the relation $||\v||_{L^2(\Omega)} \le |\Omega |^{1/6} ||\v||_{L^3(\Omega)}$, all the term of the right hand side of the previous bound can be treated as in the previous theorem except the third one which can be bounded as following:
\begin{equation}\label{equ2}
\begin{array}{rcl}
\Big| ( |\u| \u - |\u^i_h| \u^{i+1}_h, \v )  \Big|  &\le& \Big|  ( (|\u| - |\u_h^i |) \u, \v)  \Big| +
\Big| (|\u_h^i| (\u - \u_h^{i+1}) , \v)   \Big|\\
&\le& \big(   ||\u - \u^i_h||_{L^2(\Omega)} ||\u ||_{L^6(\Omega)} + ||\u^i_h||_{L^6(\Omega} ||\u - \u_h^{i+1} ||_{L^2(\Omega)}   \big)||\v ||_{L^3(\Omega)}
\end{array}
\end{equation}
We consider Relation \eqref{equ1}. We use the following triangle inequality
\[
||\u - \u^i_h||_{L^2(\Omega)} \le ||\u - \u^{i+1}_h||_{L^2(\Omega)} + ||\u^{i+1}_h - \u^i_h||_{L^2(\Omega)},
\]
the fact that the term $||\u^i_h||_{L^6(\Omega}$ is bounded, the inf-sup condition \eqref{inf-supS1} and Theorem \ref{aposterioril2}, to get the desired error bound on the pressure given by Equation \eqref{eq:uperr3}.
\end{proof}
%
%
%
%
{\rmq The bounds \eqref{eq:uperr2} and \eqref{eq:uperr3}  constitute our {\it a posteriori} error estimates where we bound the error between the exact solution $(\u,p)$ of \eqref{V1} and the numerical solution $(\u_h^{i+1},p_h^{i+1})$ of \eqref{V1hi} with respect to the indicators $\eta_{\kappa,i}^{(L)}$, $\eta_{\kappa,i}^{(D_1)}$ and $\eta_{\kappa,i}^{(D_2)}$.
 But to get the bounds of the indicators which are the subject of the next subsection (Section \ref{optimall}), we need to add the following theorem where we add a supplementary bound giving an error bound of the exact and numerical solutions.
}
{\thm Under the assumptions of Lemma \ref{bornl3},  there exists an integer $i_0$ depending on $h$ such that for all $i\ge i_0$,  the solutions $(\u,p)$ of \eqref{V1} and $(\u_h^{i+1},p_h^{i+1})$ of $\eqref{V1hi}$ verify the following error inequalities:
\begin{equation}
\label{eq:uperr211}
\begin{array}{ll}
\|\ds \frac{\beta}{\rho}(|\u|\u-|\u_h^i|\u_h^{i+1}) + \nabla (p - p_h^{i+1})\|_{L^2(\Omega)} \leq C\ds  \Big[ \sum_{\kappa \in  \mathcal{T}_h} \Big( \eta_{\kappa,i}^{(D_1)} + \eta_{\kappa,i}^{(D_2)} + \eta_{\kappa,i}^{(L)} \Big) \\
 \hspace{3cm} \ds + \sum_{\kappa \in  \mathcal{T}_h}  \Big( \|\textbf{f}-\textbf{f}_h\|_{L^2(\kappa)} + h_k \|b - b_h \|_{L^3(\kappa)}  + \sum_{e \in \Gamma_h^b} h_e^{\frac{1}{3}} ||g_h - g ||_{L^3(e)}  \Big)\Big],
\end{array}
\end{equation}
where $C$ is a constant depending on the exact solution $(\u,p)$ of \eqref{V1}.
}
\begin{proof}
Let $\u \in W^{1,6}(\Omega)^d$. Then, Equation \eqref{E1} allows us to get that the pressure is such that $\nabla p \in L^2(\Omega)^d$. Thus, the velocity error equation \eqref{1.71} is valid for all $\v \in L^2(\Omega)^d$ and can be written as
\begin{equation}\label{eq:velocerr3}
\begin{array}{ll}
\medskip
\ds \int_\Omega
 \big( \frac{\beta}{\rho}  (|\u| \u - |\u^{i}_h| \u^{i+1}_h) + \nabla  ( p-p_h^{i+1} )   \big) \cdot \v \, d\x  =  \ds -   \frac{\mu}{\rho} \int_\Omega K^{-1} (\u - \u^{i+1}_h) \cdot \v \, d\x + \alpha \int_\Omega (\u_h^{i+1} - \u_h^{i}) \cdot \v \, d\x \\
 \medskip
 + \ds \sum_{\kappa \in \mathcal{T}_h } \big[ \int_{\kappa} (-\nabla p_h^{i+1}-{{
 \frac{\mu}{\rho}K^{-1} \u_h^{i+1}}}-
 \alpha (\u_h^{i+1}-\u_h^i)-\frac{\beta}{\rho}|\u_h^i|\u_{h}^{i+1}+\f_h)\cdot (\v-\v_h)\,d\x + \int_{\kappa}(\f-
 \f_h) \cdot \v \,d\x
 \big].
\end{array}
\end{equation}
By taking $\v_h = \0$ and $\v = \ds \big( \frac{\beta}{\rho}  (|\u| \u - |\u^{i}_h| \u^{i+1}_h) + \nabla  ( p-p_h^{i+1} ) \big)$,  applying the Cauchy-Schwartz and simplifying by $||\v||_{L^2(\Omega)}$, we get the result after using Theorem \ref{aposterioril2}.
\end{proof}
{\rmq Finally, the bounds \eqref{eq:uperr2}, \eqref{eq:uperr3} and \eqref{eq:uperr211} constitute our {\it a posteriori} error estimates.
}
\subsubsection{Bounds of the indicators}\label{optimall}
\noindent In order to establish the efficiency of the {\it a posteriori} error estimates, we recall the following properties (see R. Verf\"urth,\cite{Verfurth2013}, Chapter 1). For an element $\kappa$ of $\mathcal{T}_h$, we consider the bubble function $\psi_\kappa$ (resp. $\psi_e$ for the face $e$) which is equal to the product of the $d+1$ barycentric coordinates associated with the vertices of $\kappa$ (resp. of the $d$ barycentric coordinates associated with the vertices of $e$). We also consider a lifting operator ${\mathcal{L}}_{e}$ defined on polynomials on $e$ vanishing on $\partial e$ into polynomials on the at most two elements $\kappa$ containing $e$ and vanishing on $\partial \kappa \setminus e $, which is constructed by affine transformation from a fixed operator on the reference element.
\begin{propri}\label{psi}
 Denoting by $Pr(\kappa)$ the space of polynomials of degree smaller than $r$ on $\kappa$. The following properties hold:
\begin{equation}
\forall v \in P_r(\kappa), \qquad
\begin{cases}
c ||v||_{0,\kappa} \le ||v \psi^{1/2}_{\kappa} ||_{0,\kappa}
\le c' ||v||_{0,\kappa}, &\\
|v|_{1,\kappa} \le c h_{\kappa}^{-1} ||v ||_{0,\kappa}.&
\end{cases}
\end{equation}
\end{propri}
\begin{propri} \label{psii} Denoting by $Pr(e)$ the space of polynomials of degree smaller than $r$ on $e$, we have
$$\forall\; v  \in P_r(e),\qquad
c\Vert v \Vert_{0,e}\leq \Vert v\psi_{e}^{1/2}
\Vert_{0,e}\leq c'\Vert  v \Vert_{0,e},$$
and, for all polynomials $v$ in $Pr(e)$ vanishing on $\partial e$, if $\kappa$ is an element which contains $e$,
$$ \Vert {\mathcal{L}}_{e}v \Vert_{0,\kappa}+h_{e}\mid
{\mathcal{L}}_{e}v \mid_{1,\kappa}\leq ch^{1/2}_{e}\Vert  v
\Vert_{0,e}.$$
\end{propri}
We have the following bounds of the indicators:
\begin{thm}
Let $d=3$,  $(\u,p)$  and $(\u_h^{i+1},p_h^{i+1})$ the solutions of \eqref{V1} and  of $\eqref{V1hi}$. We have the following bounds of the indicators: for each element $\kappa \in \mathcal{T}_h$,
\begin{equation}\label{lowerbounds1}
\eta^{(L)}_{\kappa,i}  \le ||\u - \u_h^i ||_{L^2(\kappa)} + ||\u - \u_h^{i+1} ||_{L^2(\kappa)},
\end{equation}
and
\begin{equation}\label{lowerbounds3}
\begin{array}{rcl}
\medskip
\eta^{(D_2)}_{\kappa,i}  &\le&
C\ds  \Big( \| \v_r\|_{L^3(w_\kappa)} + h_k \|b - b_h \|_{L^3(w_\kappa)}  + \sum_{e\in \partial \kappa}
h_e^{\frac{1}{3}}
||g_h - g ||_{L^3(e)}\Big),
\end{array}
\end{equation}
where $C$ is a constant independent of the mesh step but depends on the exact solution $(\u,p)$.
\end{thm}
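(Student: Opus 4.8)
The bound \eqref{lowerbounds1} on the linearization indicator is immediate: writing $\u_h^{i+1}-\u_h^i=(\u_h^{i+1}-\u)+(\u-\u_h^i)$ and applying the triangle inequality on $\kappa$ gives $\eta^{(L)}_{\kappa,i}\le\|\u-\u_h^{i+1}\|_{L^2(\kappa)}+\|\u-\u_h^i\|_{L^2(\kappa)}$. The genuine work is the divergence indicator \eqref{lowerbounds3}, for which the plan is to follow the standard residual (Verf\"urth) technique of testing a localized residual identity against bubble functions. The starting point is the error identity \eqref{1.72}: taking therein the admissible choice $q_h=0\in M_h$ eliminates the Cl\'ement operator, and combining the result with the divergence orthogonality \eqref{perpond} (which gives $\int_\Omega\nabla q\cdot(\u-\u_h^{i+1})=\int_\Omega\nabla q\cdot\v_r$ for all $q\in M$) yields the clean local identity
\[
\int_\Omega\nabla q\cdot\v_r=-\sum_{\kappa\in\mathcal T_h}\Big[\int_\kappa q\,(b-b_h)+\int_\kappa b_h\,q+\sum_{e\in\partial\kappa}\int_e\phi_{h,1}^e\,q\Big]-\sum_{e\in\Gamma_h^b}\int_e(g_h-g)\,q .
\]
Because the left-hand side is insensitive to additive constants and constants produce no residual (by the compatibility of the data), this identity may be applied to compactly supported bubble functions.

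For the element contribution $h_\kappa\|b_h\|_{L^3(\kappa)}$ I would test the identity with $q_\kappa=b_h\psi_\kappa$, extended by zero outside $\kappa$. Since $\psi_\kappa$ vanishes on $\partial\kappa$, all edge terms and all contributions of elements other than $\kappa$ drop, leaving $\int_\Omega\nabla q_\kappa\cdot\v_r=-\int_\kappa b_h^2\psi_\kappa-\int_\kappa(b-b_h)\,q_\kappa$. Property \ref{psi} turns $\int_\kappa b_h^2\psi_\kappa$ into a quantity equivalent to $\|b_h\|_{L^2(\kappa)}^2$; its inverse estimate controls $\|\nabla q_\kappa\|_{L^{3/2}(\kappa)}$ by $c\,h_\kappa^{-1/2}\|b_h\|_{L^2(\kappa)}$ (for $d=3$), and the left-hand side is bounded by $\|\nabla q_\kappa\|_{L^{3/2}(\kappa)}\|\v_r\|_{L^3(\kappa)}$ through H\"older. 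Dividing by $\|b_h\|_{L^2(\kappa)}$ and converting the $L^2$ norm of the piecewise constant $b_h$ to its $L^3$ norm via the inverse inequality \eqref{eq:inversin}, the powers of $h_\kappa$ combine to give $h_\kappa\|b_h\|_{L^3(\kappa)}\le C(\|\v_r\|_{L^3(\kappa)}+h_\kappa\|b-b_h\|_{L^3(\kappa)})$.

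For the face contributions $h_e^{1/3}\|\phi_{h,1}^e\|_{L^3(e)}$ I would proceed analogously with the face bubble $\psi_e$ and the lifting $\mathcal L_e$ of Property \ref{psii}: test the identity with $q_e=\mathcal L_e(\phi_{h,1}^e\psi_e)$, supported in $\omega_e$ and vanishing on $\partial\omega_e$. The only surviving boundary integral is $\int_e\phi_{h,1}^e q_e$, which Property \ref{psii} renders equivalent to $\|\phi_{h,1}^e\|_{L^2(e)}^2$; the pairing with $\v_r$ is controlled by $\|\nabla q_e\|_{L^{3/2}(\omega_e)}\|\v_r\|_{L^3(\omega_e)}$ and the volume terms by $\|q_e\|_{L^2(\omega_e)}(\|b-b_h\|_{L^2(\omega_e)}+\|b_h\|_{L^2(\omega_e)})$, all estimated through the scaling $\|\mathcal L_e v\|_{0,\kappa}+h_e|\mathcal L_e v|_{1,\kappa}\le c\,h_e^{1/2}\|v\|_{0,e}$. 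For a boundary face $e\in\Gamma_h^b$ the extra term $\int_e(g_h-g)q_e$ must be retained, and it is precisely what produces the oscillation contribution $h_e^{1/3}\|g_h-g\|_{L^3(e)}$. Dividing by $\|\phi_{h,1}^e\|_{L^2(e)}$ and converting to $L^3(e)$ by the edge inverse inequality \eqref{eq:inversin} yields the factor $h_e^{1/3}$; summing over $e\in\partial\kappa$, substituting the already proven bound for $h_\kappa\|b_h\|_{L^3}$, and using the uniform regularity $h_e\simeq h_\kappa$ together with the finite overlap of the patches $w_\kappa$ gives \eqref{lowerbounds3}.

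The main obstacle is not any isolated estimate but the bookkeeping of norm exponents: Properties \ref{psi}--\ref{psii} and the bubble equivalences are intrinsically $L^2$ statements, whereas the indicators and the lifted velocity $\v_r$ live in $L^3$ and $L^{3/2}$. Tracking the correct powers of $h_\kappa$ and $h_e$ through each application of \eqref{eq:inversin}, so that exactly the weights $h_\kappa$ and $h_e^{1/3}$ emerge, is the delicate point. This is underpinned by the preliminary reduction that replaces the full error $\u-\u_h^{i+1}$ by $\v_r$ via \eqref{perpond}, which is what legitimizes the compactly supported bubbles as test functions and removes the Cl\'ement remainder from the lower-bound argument.
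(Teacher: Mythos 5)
Your proposal is correct and follows essentially the same route as the paper: the triangle inequality for $\eta^{(L)}_{\kappa,i}$, then the residual identity obtained from \eqref{1.72} with $q_h=0$ combined with \eqref{perpond}, tested against the element bubble $b_h\psi_\kappa$ and the lifted face bubble $\mathcal{L}_e(\phi_{h,1}^e\psi_e)$, with Properties \ref{psi}--\ref{psii} and the inverse inequality \eqref{eq:inversin} producing exactly the weights $h_\kappa$ and $h_e^{1/3}$, and the bound \eqref{aux0} reinserted to absorb the $h_e\|b_h\|_{L^3}$ term. Your explicit justification that the residual functional annihilates constants (via the compatibility condition), which legitimizes the non-mean-zero bubble test functions in $M$, is a point the paper passes over silently, but it does not change the argument.
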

\begin{proof}
The bound \eqref{lowerbounds1} is a simple consequence of the definition of $\eta_{\kappa,i}^{(L)}$ and the triangle inequality.\\
\noindent In order to prove \eqref{lowerbounds3}, we consider first Equation \eqref{1.72} with $q_h=0$ and
\[
q=q_\kappa = \left\{
\begin{array}{lcl}
b_h \psi_\kappa & \mbox{on } \kappa, \\
0 & \mbox{on } \Omega \backslash \kappa,
\end{array}
\right.
\]
where $\psi_\kappa$ is the bubble functions on a given element $\kappa \in \mathcal{T}_h$. We obtain by using Relation \eqref{perpond} the following equation:
\begin{equation}
\ds \int_{\kappa} b_h^2 \psi_\kappa  \,d\x = - \ds \int_{\kappa} \nabla (b_h \psi_\kappa)  \cdot
\v_r\,d\x
- \int_{\kappa} (b_h \psi_\kappa) (b - b_h) \,d\x.\,
\end{equation}
Then we use Property \ref{psi}, the Cauchy-Schwartz inequality and the relation $|| \v ||_{L^2(\kappa)} \le |\kappa |^{1/6} ||\v ||_{L^3(\kappa)}$, and by multiplying by $h_\kappa$ to get:
\begin{equation}\label{term1}
\begin{array}{rcl}
\medskip
h_\kappa ||b_h ||_{L^2(\kappa)} &\le& c h_\kappa ( h_\kappa^{-1} ||\v_r ||_{L^2(\kappa)}  + ||b - b_h ||_{L^2(\kappa)})\\
&\le& c_1 ( h_\kappa^{\frac{1}{2}} ||\v_r ||_{L^3(\kappa)}  + h^{\frac{3}{2}}_\kappa ||b - b_h ||_{L^3(\kappa)}).
\end{array}
\end{equation}
Then we get by using the inverse inequality \eqref{eq:inversin} with $p=3$,
\begin{equation}\label{aux0}
h_\kappa ||b_h ||_{L^3(\kappa)} \le c_2 ( ||\v_r ||_{L^3(\kappa)}  + h_\kappa ||b - b_h ||_{L^3(\kappa)}),
\end{equation}
which is the part of the indicator $\eta_{\kappa,i}^{(D_2)}$ corresponding to $b_h$. \\
\noindent Again, we consider Equation \eqref{1.72} with $q_h=0$ and
\begin{equation*}
q=q_{e}=
\left \{
\begin{array}{lcl}
\mathcal{L}_{e,\kappa } \big(
 \phi_{h,1}^e \psi_{e}  \big)& \hspace{-0.5cm} \mbox{on } \{ \kappa, \kappa' \}, \\
0 & \hspace{-0.3cm}\mbox{ on } \Omega \backslash  (\kappa \cup \kappa') , \\
\end{array}
 \right.
\end{equation*}
\noindent where $\psi_e$ is the bubble function of $e$ and $\kappa'$ denotes the other element of $\mathcal{T}_h$ that share  $e$ with $\kappa$. We get the following equation:
\begin{equation*}
\ds \int _{e} \phi_{h,1}^e q\,d\s
= \ds - \int_{\kappa \cup \kappa'} q (b - b_h) \,d\x\, - \int_{\kappa \cup \kappa'} b_h q  \,d\x
- \int _{e} (g_h - g ) q \,d\s - \int_{\kappa \cup \kappa'} \nabla q \cdot
\v_r\,d\x.
\end{equation*}
Properties \ref{psi} and \ref{psii} allow us to get the following bound:
\begin{equation*}
||\phi_{h,1}^e||_{L^2(e)} \le C \big( h_e^{\frac{1}{2}} || b - b_h ||_{L^2(\kappa \cup \kappa')} +  h_e^{\frac{1}{2}} ||b_h ||_{L^2(\kappa \cup \kappa')}  + ||g - g_h ||_{L^2(e)}  +  h_e^{-\frac{1}{2}} ||\v_r||_{L^2(\kappa \cup \kappa')} \big).
\end{equation*}
By using again the inverse inequality \ref{eq:inversin} and the relation $\| \v_h \|_{L^2(\kappa)} \le |\kappa |^{1/6} \| \v_h \|_{L^3(\kappa)} $, we obtain the bound:
\begin{equation}\label{aux00}
h_e^{\frac{1}{3}} ||\phi_{h,1}^e||_{L^3(e)} \le C_1 \big( h_e || b - b_h ||_{L^3(\kappa \cup \kappa')} +  h_e ||b_h ||_{L^3(\kappa \cup \kappa')}  +
h_e^{\frac{1}{3}} ||g - g_h ||_{L^3(e)}  +  ||\v_r||_{L^3(\kappa \cup \kappa')} \big).
\end{equation}
Hence, we bound the part of $\eta_{\kappa,i}^{(D_2)}$ corresponding to $\phi_{h,1}^e$. Relations \eqref{aux0} and \eqref{aux00} give Relation \eqref{lowerbounds3}.
\end{proof}
\begin{thm}
Let $d=3$ and let the mesh satisfy \eqref{eq:reg}. Under the assumptions of Lemma \ref{bornl3}, we have
the following bound: for each element $\kappa \in \mathcal{T}_h$,
\begin{equation}\label{lowerbounds2}
\begin{array}{rcl}
\medskip
\eta^{(D_1)}_{\kappa,i}  &\le&
C\ds  \Big( \eta_{\kappa,i}^{(L)}  + \| \u - \u_h^{i+1} \|_{L^2(w_\kappa)} + \|  \frac{\beta}{\rho} 
\big( |\u| \u - |\u_h^{i+1}| \u_h^{i+1}  \big) + \nabla ( p-p_h^{i+1} )\|_{L^2(\kappa)}  \\
&& \hskip 1cm \ds + \| K^{-1} - K_h^{-1} \|_{L^{3}(w_\kappa)} + \|\textbf{f}-\textbf{f}_h\|_{L^2(w_\kappa)} + h_k \|b - b_h \|_{L^3(w_\kappa)}  + \sum_{e\in \partial \kappa} h_e^{\frac{1}{3}} ||g_h - g ||_{L^3(e)}\Big),
\end{array}
\end{equation}
where $C$ is a constant independent of the mesh step but depends on the exact solution $(\u,p)$ and $K_h^{-1}$ is an approximation of $K^{-1}$ which is a constant tensor in each triangle.
\end{thm}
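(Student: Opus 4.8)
The plan is to establish the efficiency of the element residual indicator $\eta^{(D_1)}_{\kappa,i}$ by the classical residual/bubble--function technique of Verf\"urth, localised on a single element $\kappa$; no edge bubble is needed here since $\eta^{(D_1)}_{\kappa,i}$ contains no jump term. First I would isolate the computable, elementwise--constant part of the residual. Writing
\[
\mathbf{r}_\kappa = -\nabla p_h^{i+1} - \alpha(\u_h^{i+1}-\u_h^i) - \frac{\mu}{\rho}K^{-1}\u_h^{i+1} - \frac{\beta}{\rho}|\u_h^i|\u_h^{i+1} + \f_h,
\]
so that $\eta^{(D_1)}_{\kappa,i} = \|\mathbf{r}_\kappa\|_{L^2(\kappa)}$, I replace the tensor $K^{-1}$ by its elementwise--constant approximation $K_h^{-1}$ and set $\mathbf{r}_{\kappa,h} = \mathbf{r}_\kappa + \frac{\mu}{\rho}(K^{-1}-K_h^{-1})\u_h^{i+1}$. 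Since $p_h^{i+1}\in\P_1$, $\u_h^{i+1},\u_h^i\in\P_0^d$, and $\f_h,K_h^{-1}$ are elementwise constant, $\mathbf{r}_{\kappa,h}$ is a polynomial on $\kappa$, so Property \ref{psi} applies to it.

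The core step is to test the residual equation \eqref{1.71} (with $\v_h=\0$) against $\v = \mathbf{r}_{\kappa,h}\psi_\kappa$, extended by zero outside $\kappa$: this $\v$ lies in $X$, is supported in $\kappa$ and vanishes on $\partial\kappa$, so no boundary contribution survives. Property \ref{psi} gives $c\,\|\mathbf{r}_{\kappa,h}\|^2_{L^2(\kappa)} \le \int_\kappa \mathbf{r}_{\kappa,h}\cdot\v\,d\x$, and \eqref{1.71} lets me rewrite $\int_\kappa\mathbf{r}_\kappa\cdot\v$ in terms of the true errors. To make the pressure--nonlinearity combination of \eqref{lowerbounds2} appear, I add and subtract $|\u_h^{i+1}|\u_h^{i+1}$, splitting
\[
\frac{\beta}{\rho}\big(|\u|\u-|\u_h^i|\u_h^{i+1}\big) = \frac{\beta}{\rho}\big(|\u|\u-|\u_h^{i+1}|\u_h^{i+1}\big) + \frac{\beta}{\rho}\big(|\u_h^{i+1}|-|\u_h^i|\big)\u_h^{i+1},
\]
and group the first summand with $\nabla(p-p_h^{i+1})$.

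Then I would bound each resulting term by Cauchy--Schwarz/H\"older, always using $\|\v\|_{L^2(\kappa)}\le\|\mathbf{r}_{\kappa,h}\|_{L^2(\kappa)}$ (as $0\le\psi_\kappa\le1$): the linear Darcy term by $\frac{\mu K_M}{\rho}\|\u-\u_h^{i+1}\|_{L^2(\kappa)}$ via \eqref{KmM}; the grouped term by $\|\frac{\beta}{\rho}(|\u|\u-|\u_h^{i+1}|\u_h^{i+1})+\nabla(p-p_h^{i+1})\|_{L^2(\kappa)}$; the acceleration term $-\alpha\int_\kappa(\u_h^{i+1}-\u_h^i)\cdot\v$ by $\alpha\,\eta^{(L)}_{\kappa,i}$; and the data term by $\|\f-\f_h\|_{L^2(\kappa)}$. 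Dividing by $\|\mathbf{r}_{\kappa,h}\|_{L^2(\kappa)}$ bounds $\|\mathbf{r}_{\kappa,h}\|_{L^2(\kappa)}$, and the triangle inequality together with the H\"older estimate $\frac{\mu}{\rho}\|(K^{-1}-K_h^{-1})\u_h^{i+1}\|_{L^2(\kappa)}\le\frac{\mu}{\rho}\|K^{-1}-K_h^{-1}\|_{L^3(\kappa)}\|\u_h^{i+1}\|_{L^6(\kappa)}$ recovers $\eta^{(D_1)}_{\kappa,i}$ and yields the $\|K^{-1}-K_h^{-1}\|_{L^3}$ contribution; here Lemma \ref{bornl3} supplies $\|\u_h^{i+1}\|_{L^6}\le\widehat{C}(\u,p)$ for $i\ge i_0$. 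Finally $\kappa$ is enlarged to $w_\kappa$, and the nonnegative oscillation terms $h_\kappa\|b-b_h\|_{L^3(w_\kappa)}$ and $\sum_{e\in\partial\kappa}h_e^{1/3}\|g_h-g\|_{L^3(e)}$ are appended so that the right-hand side matches the uniform form shared with \eqref{lowerbounds1} and \eqref{lowerbounds3}.

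The hard part will be the cross term $\frac{\beta}{\rho}(|\u_h^{i+1}|-|\u_h^i|)\u_h^{i+1}$, which must be controlled by $\eta^{(L)}_{\kappa,i}=\|\u_h^{i+1}-\u_h^i\|_{L^2(\kappa)}$ with a constant that does \emph{not} degenerate as $h\to0$. Using $\big||\u_h^{i+1}|-|\u_h^i|\big|\le|\u_h^{i+1}-\u_h^i|$ pointwise and the fact that $\u_h^{i+1}$ is constant on $\kappa$ gives $\|(|\u_h^{i+1}|-|\u_h^i|)\u_h^{i+1}\|_{L^2(\kappa)}\le\|\u_h^{i+1}\|_{L^\infty(\kappa)}\,\eta^{(L)}_{\kappa,i}$, so the obstacle is the uniform $L^\infty$ control of $\u_h^{i+1}$: a naive inverse inequality would cost a stray factor $h^{-1/2}$. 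I would instead inherit this bound from the smoothness $\u\in W^{1,6}(\Omega)^d\hookrightarrow L^\infty(\Omega)$ in $d=3$ and the $L^2$--convergence $\u_h^{i+1}\to\u_h$, invoking the smallness $\|\u_h^{i+1}-\u_h^i\|_{L^\infty(\Omega)}\le\mu K_m/(2\beta)$ already established for $i\ge i_0$ in the proof of Theorem \ref{aposterioril2}, so that the constant depends only on $(\u,p)$, as claimed.
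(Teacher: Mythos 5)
Your core machinery coincides with the paper's proof: you test \eqref{1.71} with $\v_h=\0$ and $\v=\mathbf{r}_{\kappa,h}\psi_\kappa$ (the residual rewritten with the elementwise-constant $K_h^{-1}$, multiplied by the element bubble and extended by zero), invoke Property \ref{psi}, estimate term by term with Cauchy--Schwarz/H\"older using $\|\v\|_{L^2(\kappa)}\le\|\mathbf{r}_{\kappa,h}\|_{L^2(\kappa)}$, divide, and conclude with the triangle inequality $\frac{\mu}{\rho}\|(K^{-1}-K_h^{-1})\u_h^{i+1}\|_{L^2(\kappa)}\le\frac{\mu}{\rho}\|K^{-1}-K_h^{-1}\|_{L^3(\kappa)}\|\u_h^{i+1}\|_{L^6(\kappa)}$ and Lemma \ref{bornl3}. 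All of that is correct and is exactly what the paper does.

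The genuine gap is in the step you yourself flag as the hard part. The paper never splits the nonlinearity: it keeps the combination $\frac{\beta}{\rho}\big(|\u|\u-|\u_h^{i}|\u_h^{i+1}\big)+\nabla(p-p_h^{i+1})$ intact --- the same quantity that the upper bound \eqref{eq:uperr211} controls --- so the cross term $(|\u_h^{i+1}|-|\u_h^i|)\u_h^{i+1}$ never appears (the $|\u_h^{i+1}|\u_h^{i+1}$ in the display \eqref{lowerbounds2}, versus the $|\u_h^{i}|\u_h^{i+1}$ actually produced by the paper's proof, is best read as a typo in the statement). Your regrouping forces you to bound $\frac{\beta}{\rho}\|(|\u_h^{i+1}|-|\u_h^i|)\u_h^{i+1}\|_{L^2(\kappa)}\le\frac{\beta}{\rho}\|\u_h^{i+1}\|_{L^\infty(\kappa)}\,\eta^{(L)}_{\kappa,i}$, and the uniform-in-$h$ control of $\|\u_h^{i+1}\|_{L^\infty(\kappa)}$ that you invoke is not available. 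The smallness $\|\u_h^{i+1}-\u_h^i\|_{L^\infty(\Omega)}\le\mu K_m/(2\beta)$ established for $i\ge i_0$ bounds only the difference of consecutive iterates, not the iterate itself; and $W^{1,6}(\Omega)\hookrightarrow L^\infty(\Omega)$ bounds the exact solution $\u$, not the piecewise-constant $\u_h^{i+1}$. Transferring it would require a bound on $\|\u_h^{i+1}-\u\|_{L^\infty(\Omega)}$, and the only available route --- the inverse inequality $\|\cdot\|_{L^\infty(\kappa)}\le Ch_\kappa^{-3/2}\|\cdot\|_{L^2(\kappa)}$ combined with the $O(h)$ a priori estimate \eqref{priori1} and the $L^2$-convergence of the iterates --- leaves an uncontrolled factor $h^{-1/2}$, exactly the stray factor you set out to avoid. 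This is precisely why Lemma \ref{bornl3} is stated in $L^6$ rather than $L^\infty$: in dimension $3$, $L^6$ is the strongest norm that an $O(h)$ error in $L^2$ can buy through inverse inequalities. The fix is simply to do what the paper does: leave $|\u_h^{i}|\u_h^{i+1}$ inside the grouped pressure--nonlinearity term, after which your argument goes through verbatim and no $L^\infty$ control is needed.
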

\begin{proof}
Let us now prove Relation \eqref{lowerbounds2}. We consider Equation \eqref{1.71} with $\v_h=0$ and
\begin{equation*} \v=\v_{\kappa}=
\left \{
\begin{array}{lcl}
\ds \big( -\nabla p_h^{i+1}- \alpha 
(\u_h^{i+1} - \u_h^{i}) -
\frac{\mu}{\rho}
K_h^{-1} \u^{i+1}_h - \frac{\beta}{\rho} |\u^i_h| \u^{i+1}_h + \f_h \big)\psi_{\kappa} && \hspace{-0.cm} \mbox{ on } \kappa, \\
0 && \hspace{-0.cm}\mbox{ on } \Omega \backslash \kappa, \\
\end{array}
 \right.
\end{equation*}
where $K_h^{-1}$ is an approximation of $K^{-1}$ which is a constant tensor in each triangle.
We obtain the following equation:
\begin{equation*}
\begin{array}{ll}
\medskip
 \ds  \int_\kappa | (-\nabla p_h^{i+1}- \alpha 
 (\u_h^{i+1} - \u_h^{i}) -
 \frac{\mu}{\rho}
 K_h^{-1} \u^{i+1}_h - \frac{\beta}{\rho} |\u^i_h| \u^{i+1}_h + \f_h) \psi_\kappa^{1/2} |^2 d\x = \\
 \medskip
 \hspace{2cm} \ds \frac{\mu}{\rho}
 \int_\kappa   (K^{-1} - K_h^{-1}) \u^{i+1}_h \cdot \v \, d\x +
 \frac{\mu}{\rho} \int_\kappa K^{-1} (\u - \u^{i+1}_h) \cdot \v \, d\x +
 \frac{\beta}{\rho} \int_\kappa (|\u| \u - |\u^i_h| \u^{i+1}_h) \cdot \v \, d\x \\
 \medskip
 \hspace{2cm}  \ds + \ds\int_{\kappa} \nabla  ( p-p_h^{i+1} ) \cdot \textbf{v}\,d\x - \ds \int_{\kappa}
(\f-\f_h)\cdot \v\,d\x\,  - \alpha \ds \int _{K} (\u_h^{i+1} - \u_h^i) \cdot \v \, d\x,
\end{array}
\end{equation*}
and then by using Lemma \ref{bornl3} and Properties \ref{psi} and \ref{psii}, we get  the following bound:
\begin{equation}
\begin{array}{ll}
\medskip
 \ds  \| -\nabla p_h^{i+1}- 
 \alpha (\u_h^{i+1} - \u_h^{i}) -  \frac{\mu}{\rho}
 K^{-1}_h \u^{i+1}_h - \frac{\beta}{\rho} |\u^i_h| \u^{i+1}_h + \f_h
 \|_{L^2(\kappa)} \le \\
 \medskip
 \hspace{2cm} \ds C \Big( \| K^{-1} - K_h^{-1} \|_{L^{3} (\kappa)}   +
 \| \u - \u^{i+1}_h \|_{L^2(\kappa)} + \| \frac{\beta}{\rho} \big(|\u| \u - |\u^i_h| \u^{i+1}_h \big) + \nabla (p-p_h^{i+1}) \|_{L^2(\kappa)}\\
 \hspace{3cm} \ds + \ds \|\f-\f_h \|_{L^2(\kappa)} + \alpha \| \u_h^{i+1} - \u_h^i \|_{L^2(\kappa)} \Big).
\end{array}
\end{equation}
Thus we get the result by using the following triangle inequality and Lemma \ref{bornl3} :
\begin{equation}\label{aux11}
\begin{array}{ll}
\medskip
 \ds  \| -\nabla p_h^{i+1}- 
 \alpha (\u_h^{i+1} - \u_h^{i}) -  \frac{\mu}{\rho}
 K^{-1} \u^{i+1}_h - \frac{\beta}{\rho} |\u^i_h| \u^{i+1}_h + \f_h
 \|_{L^2(\kappa)} \le \\
 \medskip
 \hspace{.5cm} \ds   \| -\nabla p_h^{i+1}- 
 \alpha (\u_h^{i+1} - \u_h^{i}) -  \frac{\mu}{\rho}
 K^{-1}_h \u^{i+1}_h - \frac{\beta}{\rho} |\u^i_h| \u^{i+1}_h + \f_h
 \|_{L^2(\kappa)} + \frac{\mu}{\rho} \| K^{-1}  - K^{-1}_h \|_{L^3(\kappa)} \|  \u^{i+1}_h \|_{L^6(\kappa)}. 
\end{array}
\end{equation}
\end{proof}

\section{Numerical simulation}
We validate the theory developed here by showing numerical simulations using Freefem++ (see \cite{hecht}). \\
We consider the iterative scheme \eqref{V1hi}.
For the stopping criterion given later, we define the  iterative error  
$$Err_L=\ds \Big( \frac{||\u_h^{i+1}-\u_h^{i}||_{L^3(\Omega)} + ||\nabla(p_h^{i+1}-p_h^{i})||_{L^{3/2}(\Omega)}}{||\u_h^{i+1}||_{L^3(\Omega)} + ||\nabla p_h^{i+1}||_{L^{3/2}(\Omega)}} \Big).$$
In the definition of $Err_L$, we consider the iterative error $||\u_h^{i+1}-\u_h^{i}||_{L^3(\Omega)}$ in the natural space of the velocity $L^3(\Omega)$ although in the definition of $ \eta_{\kappa,i}^{(L)}$, we used the error in $L^2(\kappa)$.
\subsection{First test case}
In this section, the domain $\Om$ is the square $\Omega=]0,1[^2$ and all computations start on a uniform initial triangular mesh obtained by dividing $\Omega$ into $N^2$ equal squares, each one subdivided into $2$ triangles, so that the initial triangulation consists of $2N^2$ triangles.
\noindent The theory is tested by applying the numerical scheme \eqref{V1hi} to  the  exact solution
$(\u,p,T)=({\bf curl}\, \psi,p,T)$
where $\psi$ and $p$ are given by
\begin{equation}
\label{eq:1.78}
\ds \psi(x,y)=e^{-  \gamma  ((x-0.5)^2+(y-0.5)^2)}
\end{equation}
and
\begin{equation}
\label{eq:1.79}
\ds p(x,y)=x*(x-2./3.)*y*(y-2./3.),
\end{equation}
 with the choice  $K=I$, $\mu=\rho=1$ and $\gamma  =50$. Here we have $\u.\n=0$ and $b=\div(\u)=0$.\\
 
We begin by testing the dependency of the convergence of the iterative scheme \eqref{V1hi} with respect to $\alpha$.
We consider $N=60$ and for each $\alpha$, we stop the algorithm \eqref{V1hi} when the error $Err_L <1e^{-5}$. \\
%
To discribe the convergence of Algorithm \eqref{V1hi}, we consider also the error
\[
Err=\ds\Big( \frac{||\u_h^{i}-\u||_{L^3(\Omega)} + ||\nabla(p_h^{i}-p)||_{L^{3/2}(\Omega)}}{||\u||_{L^3(\Omega)} + ||\nabla p||_{L^{3/2}(\Omega)}}\Big).
\]
We mention that in the definition of $Err$, we considered $\|\u_h^i - \u\|_{L^3(\Omega)}$ despite that in Theorem \ref{aposterioril2} it figures to the power $3/2$.\\

Tables \ref{tab111*} and \ref{tab112*} show, for $\beta=1$ and $\beta=10$, the error $Err$ and the number of iterations $Nbr$ which describe the convergence of Algorithm \eqref{V1hi} with respect to $\alpha$ for $\u_h^0=\0$. We remark that the best convergence is obtained for $\alpha_{min}=2.3$ when $\beta=1$ and for $\alpha_{min}=12$ when $\beta=10$.\\
\begin{table}[h!]
\begin{tabular}{|l|l|l|l|l|l|l|l|l|l|l|l|l|l|l|l|l|}
\hline
\bf $\alpha$  & \bf 0.001 & \bf 0.01 & \bf 0.1 & \bf 1&\bf 1.4 &\bf 1.9& \bf 2.1& \bf 2.3& \bf 2.5& \bf 2.7& \bf 3 & \bf 3.7 &\bf 5 &\bf 10 & \bf 100 & \bf 1000  \\
\hline
 \bf Nbr  & 75 & 74 & 63 &25&20&16&15&14&15&16&17&20&24 & 41 & 231 & 1134  \\
\hline
\end{tabular}
\caption{
Number of iterations $Nbr$  for each $\alpha$. ($\beta=1$  and $\u_h^0=\0$)). In all these cases, $Err=-0.939$ (in logarithmic scale).  }\label{tab111*}
%
%
\begin{tabular}{|l|l|l|l|l|l|l|l|l|l|l|l|l|l|l|l|l|l|l|}
\hline
\bf $\alpha$ & \bf 0.001 & \bf 0.01 & \bf 0.1 & \bf 1 &\bf6& \bf 8&\bf 10 &\bf 11 & \bf 12& \bf 13&\bf 14&\bf 15&\bf 18&\bf21 &\bf 35 &\bf 100 & \bf 1000 \\
\hline
 \bf Nbr  & 693 & 681 & 579 & 232 &55&42& 34 &32&30&31&32&34&38&42&59& 116 & 465  \\
\hline
\end{tabular}
\caption{Number of iterations $Nbr$  for each $\alpha$. ($\beta=10$ and $\u_h^0=\0$). In all these cases, $Err=-0.475$ (in logarithmic scale).}\label{tab112*}
\end{table}

To go far with our numerical investigations, we test Algorithm \eqref{V1hi} where the initial guess $\u_h^0$ is calculated by using the Darcy's problem (which corresponds to $\beta=\alpha=0$).  Tables \ref{tab111} and \ref{tab112} show, for $\beta=1$ and $\beta=10$, the error $Err$ and the number of iterations $Nbr$ with respect to $\alpha$. We remark that here also the best convergence is obtained for $\alpha_{min}=1.4$ when $\beta=1$ and for $\alpha_{min}=14$ when $\beta=10$. In this case of the initial guess, we remark that the number of the iterations is slightly smaller than that obtained for $\u_h^0=\0$. Thus, in the following, all the numerical investigations will be performed with the initial guess  calculated by using the Darcy's problem.\\

\begin{table}[h!]
\begin{tabular}{|l|l|l|l|l|l|l|l|l|l|l|l|l|l|l|l|l|l|l|}
\hline
\bf $\alpha$  & \bf 0.001 & \bf 0.01 & \bf 0.1 & \bf 1 & \bf1.4& \bf 1.9& \bf 2.3&\bf 2.6&\bf 2.9&\bf 3.3&\bf 3.7& \bf 4& \bf 5& \bf 6&\bf 10 & \bf 100 & \bf 1000  \\
\hline
 \bf Nbr  & 74 & 73 & 62 & 25&20&16&13&12&13&14&15&16&18&21 & 30 & 174 & 904  \\
\hline 
\end{tabular}
\caption{Number of iterations $Nbr$  for each $\alpha$. ($\beta=1$). $\u_h^0$ is calculated by using the Darcy's problem. In all these cases, $Err=-0.939$ (in logarithmic scale).}\label{tab111}
%
%
\begin{tabular}{|l|l|l|l|l|l|l|l|l|l|l|l|l|l|l|l|l|l|l|l|}
\hline
\bf $\alpha$ & \bf 0.001 & \bf 0.01 & \bf 0.1& \bf 1 &\bf 6 & \bf 10&\bf 11 &\bf 12 & \bf 13& \bf 14 &\bf 15&\bf 16&\bf 18&\bf 21& \bf 28 & \bf 100 & \bf 1000 \\
\hline
 \bf Nbr  & 692 & 680 & 578 & 232&55 & 34 &31&29&27&26&27&28&30&33&40 & 92 & 458  \\
\hline
\end{tabular}
\caption{Number of iterations $Nbr$  for each $\alpha$. ($\beta=10$). $\u_h^0$ is calculated by using the Darcy's problem. In all these cases, $Err=-0.475$ (in logarithmic scale).}\label{tab112}
\end{table}
Furthermore, Table 5 shows the dependancy of $\alpha_{min}$ whith respect to $h$ for $\beta=100$. We remark that $\alpha_{min}$ increases when $h$ decreases, which is consistent with the results of the Theorems \ref{boundu1} and \ref{converg1} (see Remarks \ref{zero} and \ref{zero2}).\\
\begin{table} [h!]
\begin{tabular}{|l|l|l|l|l|l|l|l|}
\hline
\bf $h$ & \bf 0.1414 &\bf 0.0708 & \bf 0.0353 & \bf 0.0283 & \bf 0.0177 & \bf 0.007889 & \bf 0.00543\\
\hline 
\bf $\alpha_{min}$ & 66& 68.5 & 80.5 & 82.25 &82.75 & 85.5 & 87 \\
\hline
\end{tabular}
\caption{$\alpha_{min}$ with respect to $h$. ($\beta=100$ and $\u_h^0=0$).}
\end{table}
\\
In the following of this section, we will show numerical investigations corresponding to the {\it a posteriori} error estimate. We take $\beta=10$, $\alpha=10$ and $N=10$ for the initial mesh.\\
On a given mesh and for the numerical calculation, it is convenient to compute the following indicators:
\begin{equation}\nonumber
 \eta_{i}^{(D)}=\big(\sum _{K \in
 \mathcal{T}_h }(\eta_{K,i}^{(D_1)})^2 + (\eta_{K,i}^{(D_2)})^2
\big)^{\frac{1}{2}}
\end{equation}
and
\begin{equation}\nonumber
 \eta_{i}^{(L)}=\big(\sum _{K \in
 \mathcal{T}_h }((\eta_{K,i}^{(L)})^2
\big)^{\frac{1}{2}}.
\end{equation}
The iterations are stopped following the criteria
\begin{equation}\label{stoppingg}
\eta_{i}^{(L)}\leq \tilde{\gamma} \eta_{i}^{(D)},
\end{equation}
where $\tilde{\gamma}=0.001$. For the study of the dependence of the stopping criteria $\eqref{stoppingg}$ with $\tilde{\gamma}$, we refer to \cite{LAM11} and
\cite{ERN11} where the authors introduce this new stopping criterion.\\
%
For the adaptive mesh (refinement and coarsening), we use routines in FreeFem++. The indicators \eqref{indicatorss} are used for mesh adaptation by the adapted mesh algorithm  introduced in \cite{BDMS}. \\

In Figure \ref{figure1.1}, we present the evolution of the mesh
during the iterations (initial, second and fourth refinement levels). We notice
that the mesh is concentrated in the region where the solution needs to be well described.
\begin{figure}[h!]
\hskip-.1cm
\begin{subfigure}[b]{0.35\textwidth}
\centering
\includegraphics[width=5cm]{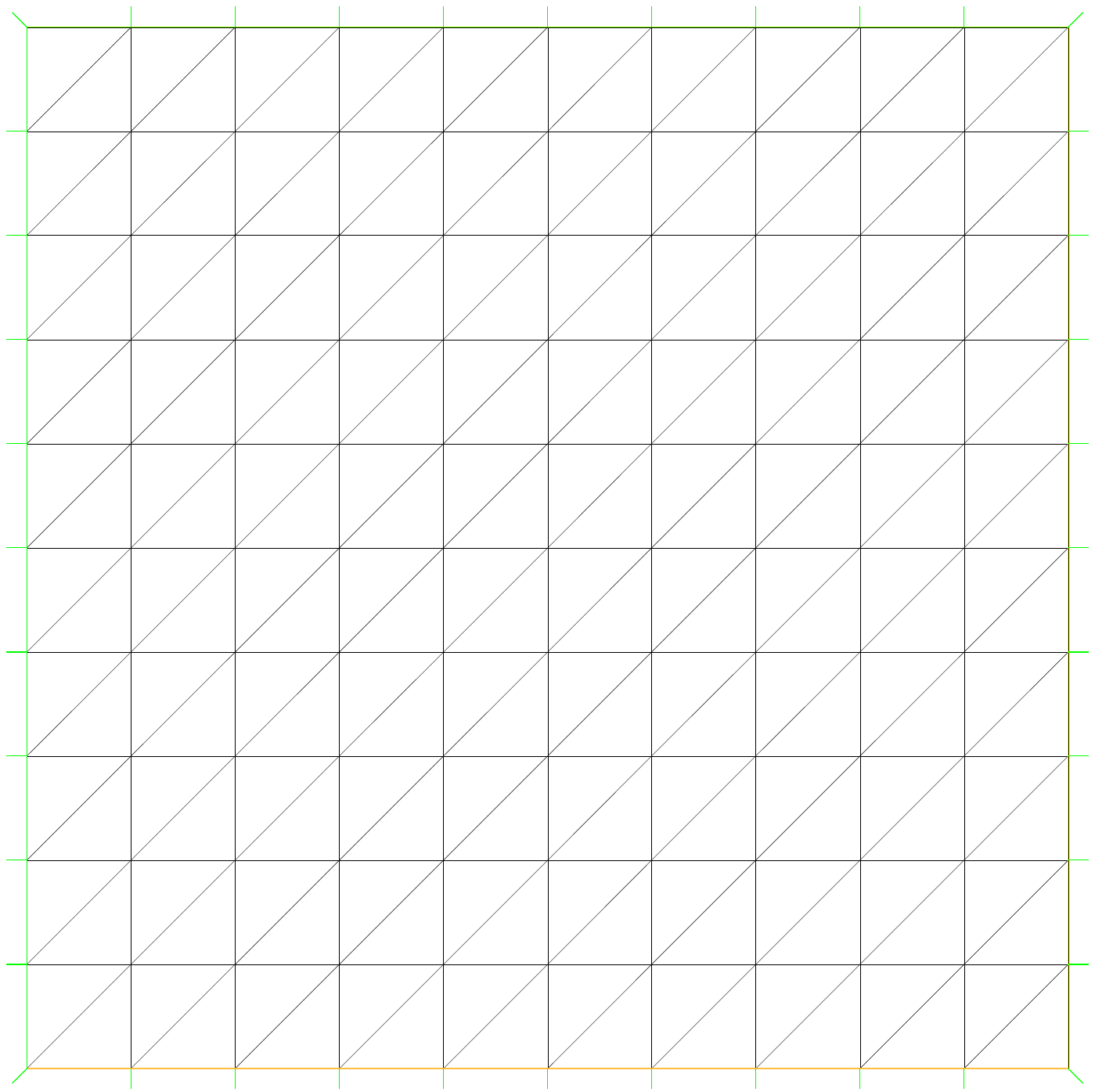}
\end{subfigure}
\hskip-.5cm
\begin{subfigure}[b]{0.35\textwidth}
\centering
\includegraphics[width=5cm]{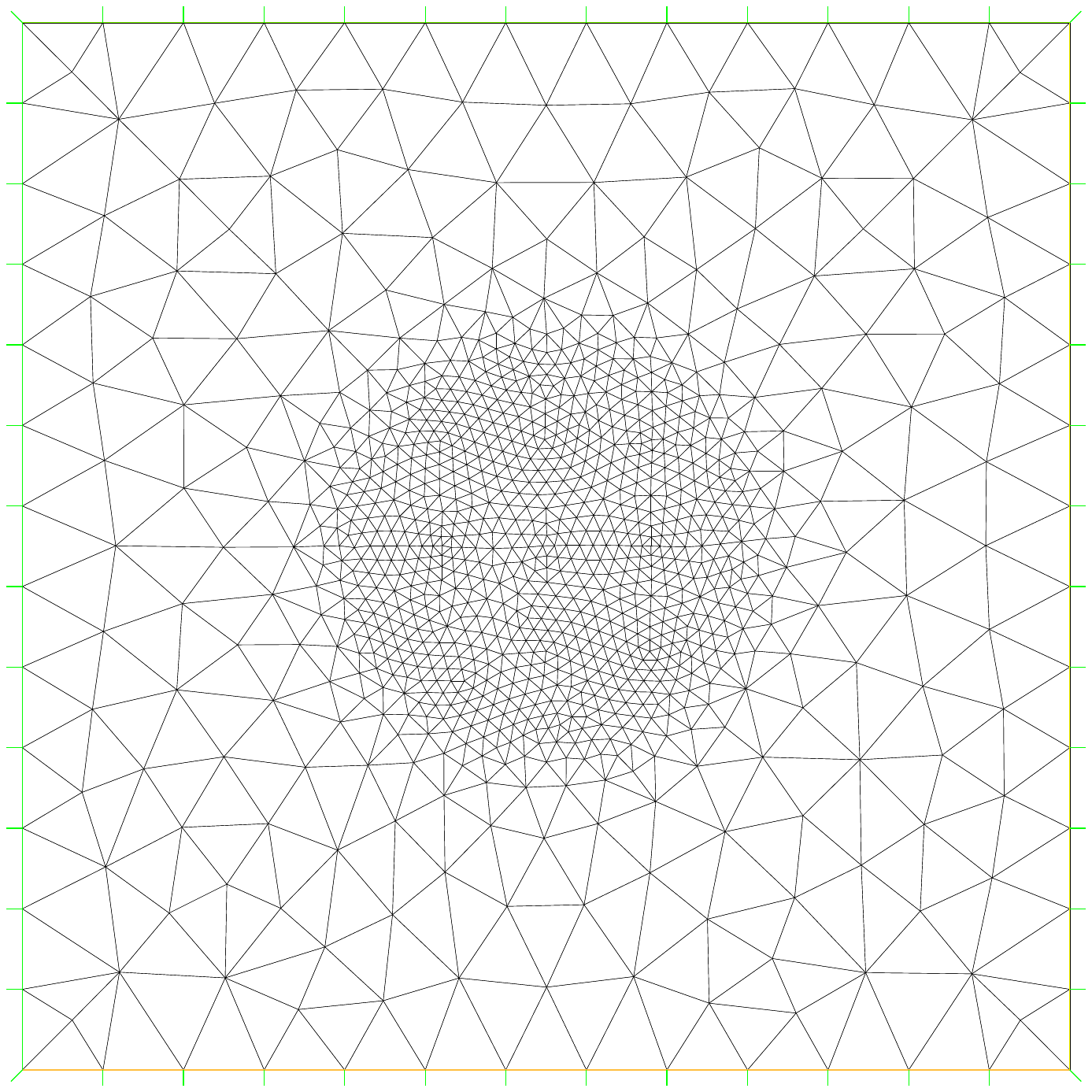}
\end{subfigure}
%
\hskip-.5cm
\begin{subfigure}[b]{0.35\textwidth}
\centering
\includegraphics[width=5cm]{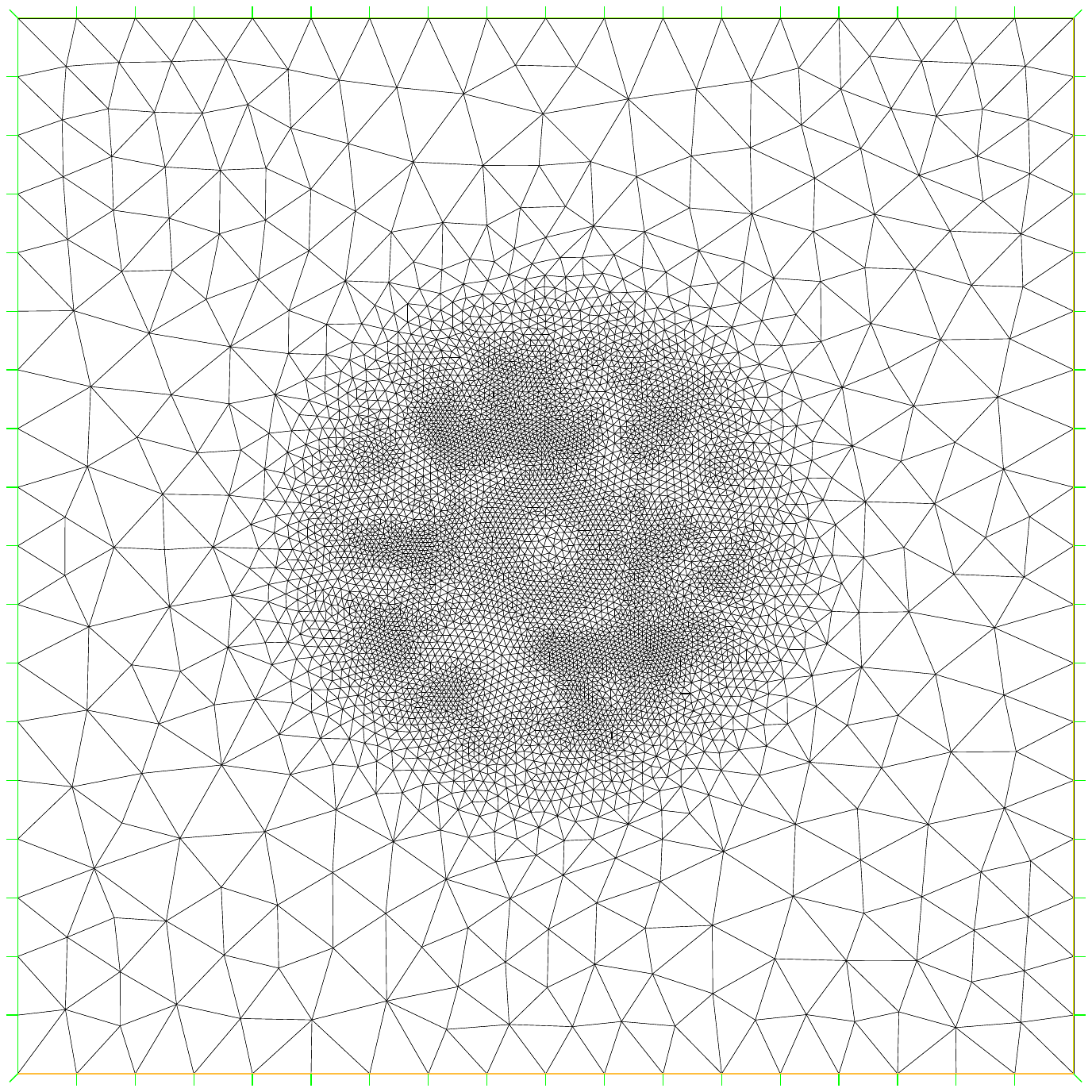}
\end{subfigure}
\caption{Evolution of the mesh during the refinement levels (initial, second and fourth).}\label{figure1.1}
\end{figure}

Next, we plot and study the error curves between the exact and numerical solutions corresponding to uniform and adaptive method. \\

Figure \textsc{\ref{figure1.2}} plots a comparison of the global error curves $Err$ versus the total number of  vertices in logarithmic scales for the uniform and adapt methods; global in the sense that they  depict the sum of the velocity and pressure errors. We notice that the errors of the adaptive mesh method are much smaller than that obtained with the uniform method, hence the efficiency of this method.
\begin{figure}[h!]
\centering
\includegraphics[width=9.5cm]{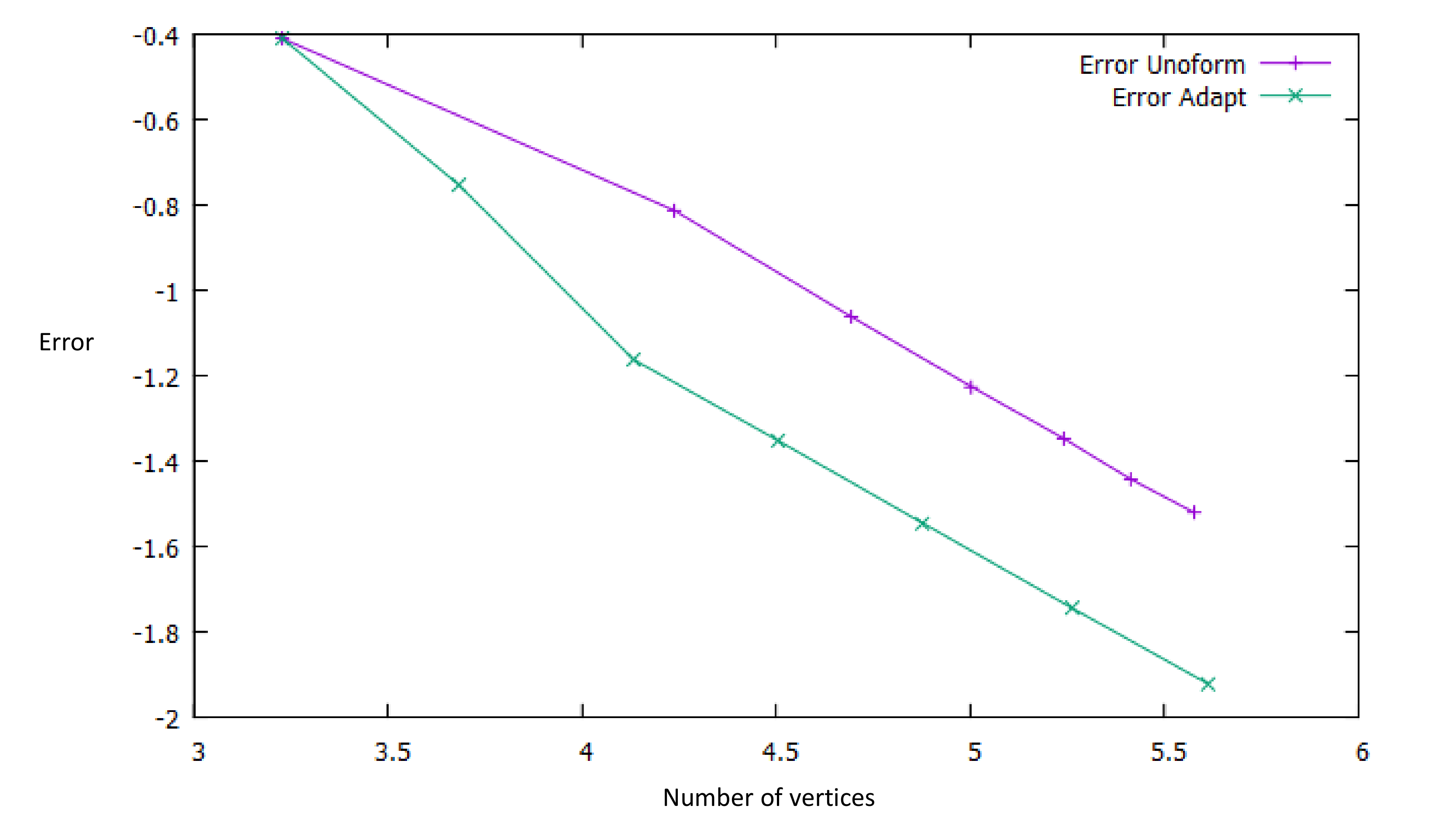}
\vskip -.4cm
\caption{Comparison of the errors $Err$ with respect to the total number of vertices in logarithmic scale.}\label{figure1.2}
\end{figure}

In table \ref{tableconst1}, we present the effectivity index defined as
\[
EI = \ds \frac{\ds \eta_i^{(L)} +  \eta_i^{(D)}}{\ds
 \|\textbf{u}-\textbf{u}_h^{i+1}\|_{L^3(\Omega)} + ||\nabla(p - p_h^{i+1|}) ||_{L^{3/2}(\Omega)} }
\]
with respect to the number of vertices during the refinement levels. This effectivity index is calculated on each mesh level after the convergence on the iterations i by using the stopping criteria \eqref{stoppingg}. Table \ref{tableconst1} shows that it is between $38.51$ and $25.47$.
\begin{table}[h!]
\begin{tabular} {|l||*{18}{c|}}
 \hline Refinement Level &  initial  & first  & second  & third  & fourth & fifth & sixth \\
 \hline
 Number of vertices
 &  121  &  313  &  973  &  2638 & 6197 & 15358 &  37703  \\
 \hline Effectivity index  & 38.51 & 32.53 & 30.54 & 27.91 & 28.70 & 27.15 & 25.47  \\
\hline
\end{tabular}
\vspace{.2cm} \caption{ EI with respect to the refinement levels.}
\label{tableconst1}
\end{table}
\subsection{Second test case} In this case, we consider a more complicated geometry \ref{geomecorners} presenting reentrant corners to show the efficiency of the adaptive method proposed in this work. Furthermore, we take $\mu=\rho=1,\alpha=10, \beta=10, \u.\n=0, b=\div(\u)=0$, $\f=(f,0)$ where 
$$
f=\left\{
\begin{array}{rcl}
0     && \quad \mbox{if } y >1,\\ 
-2     && \quad \mbox{if } y <=1, 
\end{array}
\right.
$$
and $K$ such that 
$$
K^{-1}= \left(
\begin{array}{lcl}
2+\sin(\pi x) \sin(\pi y) && \quad 0.2 x\\
0.2 x && \quad 3+\sin(\pi x) \sin(\pi y)
\end{array}
\right)
$$
for all the numerical simulations of this section. 
\begin{figure}[!ht]
\vspace{-.3cm}
\begin{center}
\includegraphics[width=6.cm]{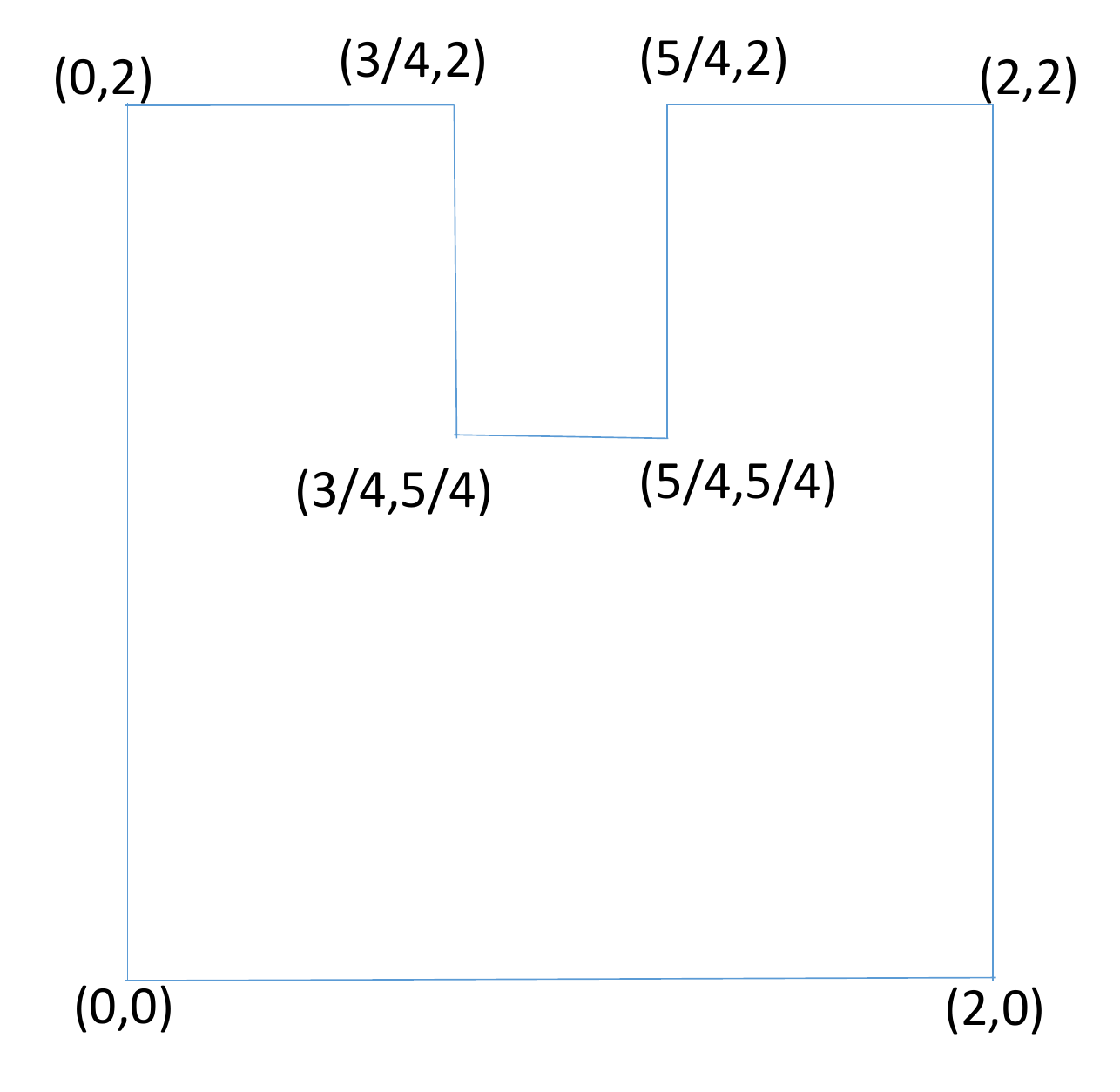}
\vspace{-0.6cm} \caption{Geometry.} \label{geomecorners}
\end{center}
\end{figure}
%

%
%
%
%
We begin by showing comparisons between the uniform and the adaptive methods corresponding to iterative system \eqref{V1hi}. \\
Figures \ref{figurec21}-\ref{figurec24} present the evolution of the mesh during the iterations. We remark that, from an iteration to another, the concentration of the refinement is on the complex vorticity regions, namely at the reentrant corner and some regions of $\Omega$.
\begin{figure}[htbp]
\begin{minipage}[b]{0.450\linewidth}
 \centering
\includegraphics[width=6cm]{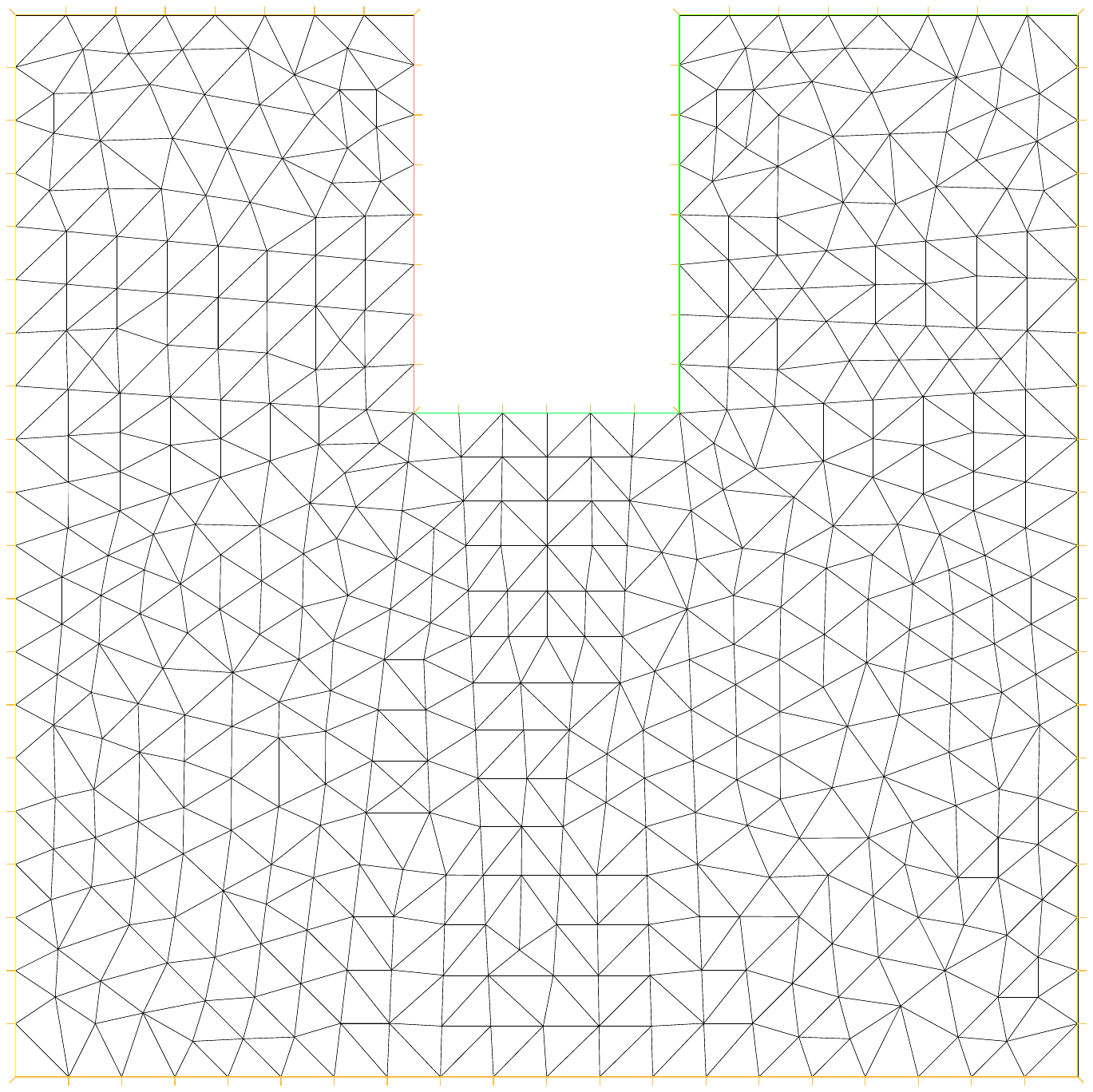}
\vspace{-.4cm}
\caption{Initial mesh (912 triangles)} \label{figurec21}
\end{minipage}\hfill
\begin{minipage}[b]{0.45\linewidth}
\centering
\includegraphics[width=6cm]{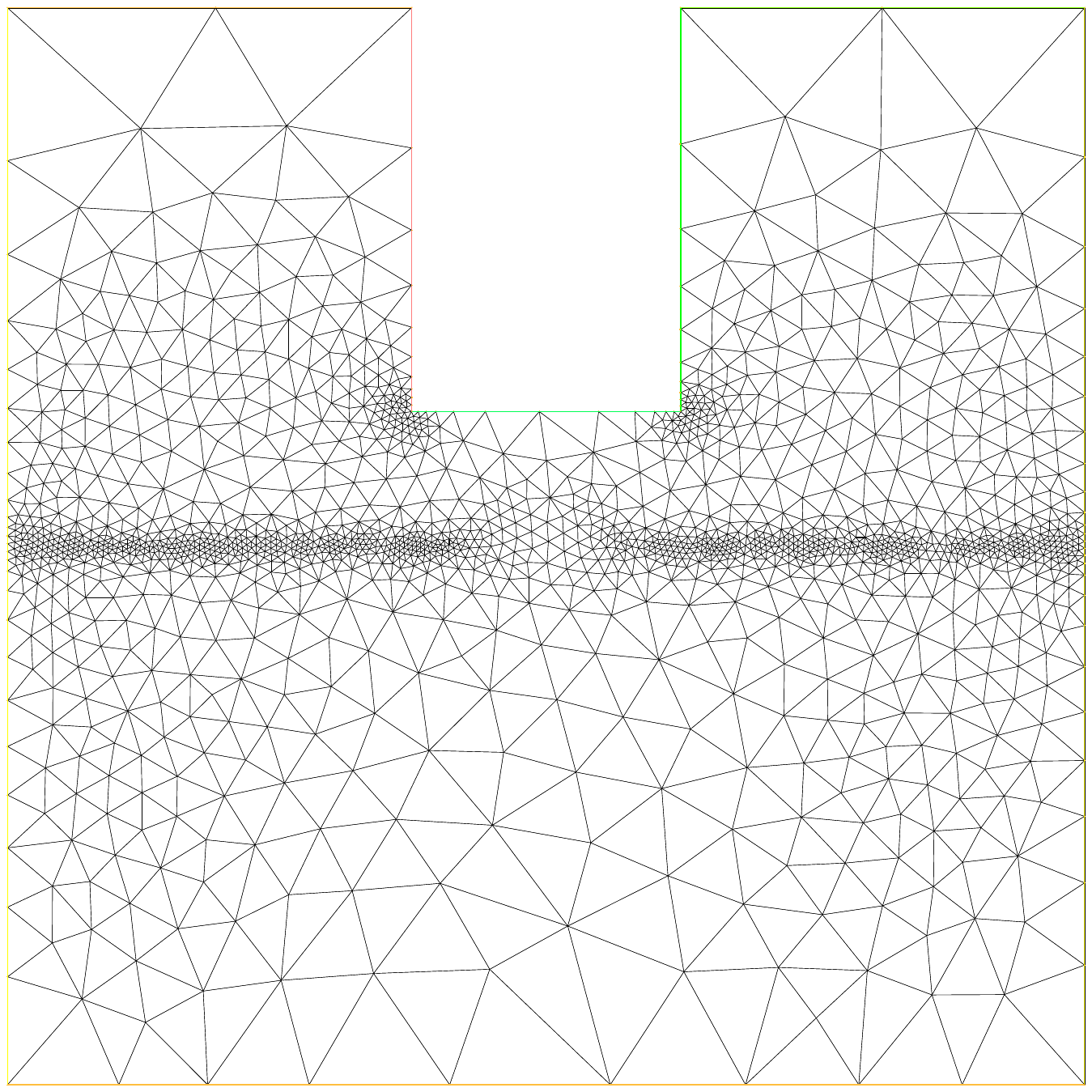}
\vspace{-.4cm}
\caption{ Second level mesh (3956 triangles)} \label{ figurec22 }
\end{minipage}\hfill
\begin{minipage}[b]{0.45\linewidth}
\centering
\includegraphics[width=6cm]{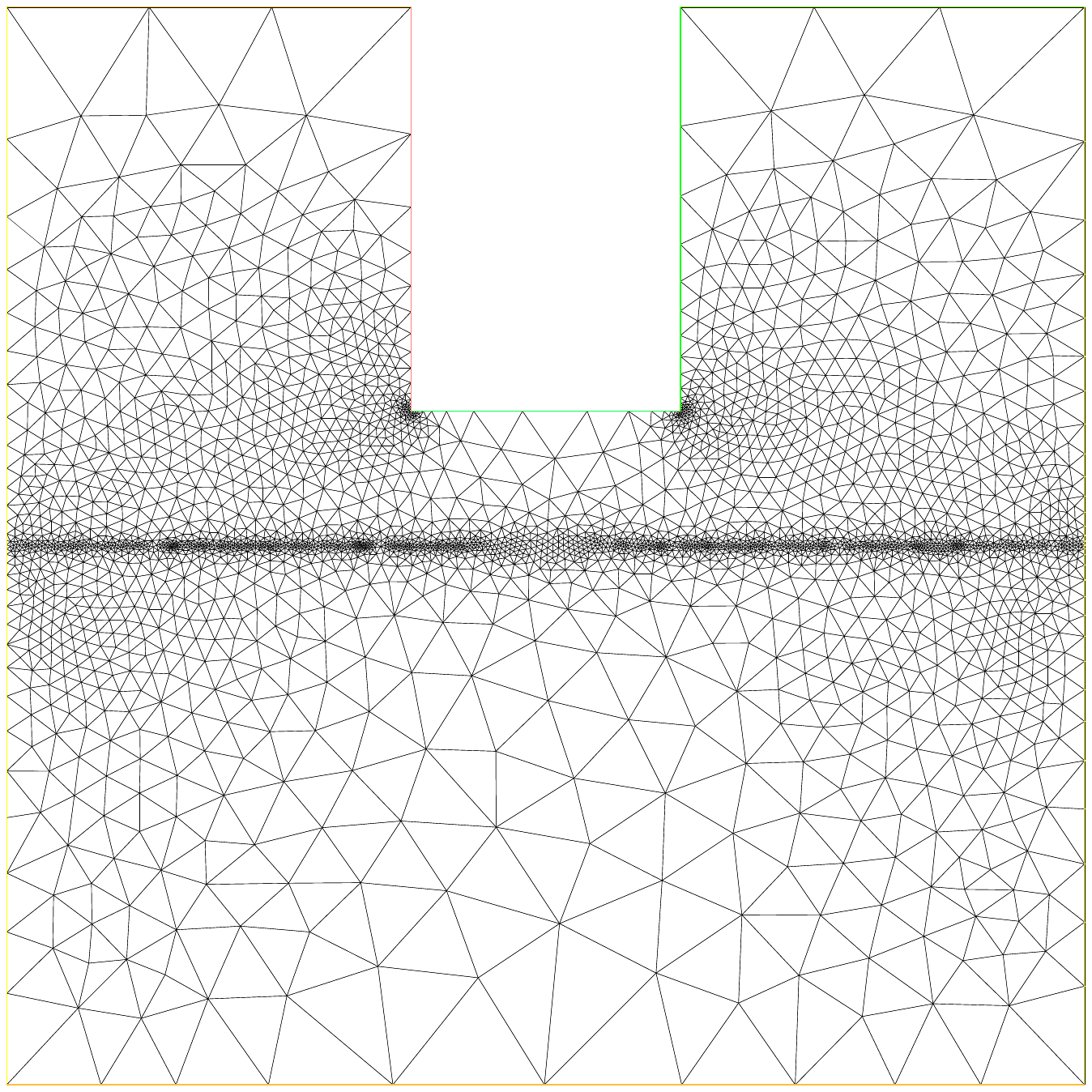}
\vspace{-.4cm}
\caption{ Third level mesh (10389 triangles) } \label{figurec23}
\end{minipage}\hfill
\begin{minipage}[b]{0.45\linewidth}
\centering
\includegraphics[width=6cm]{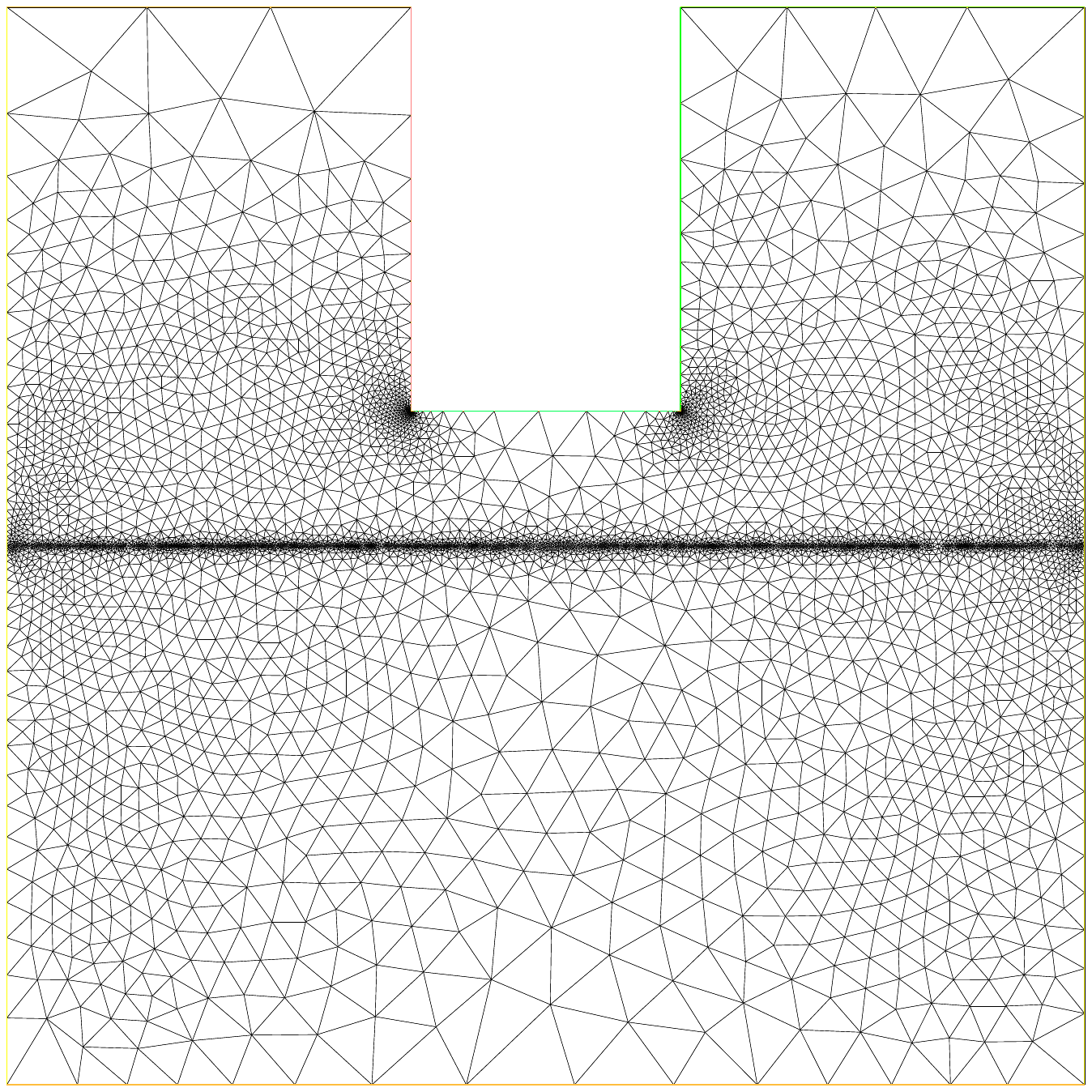}
\vspace{-.4cm}
\caption{ Fourth level mesh (20361 triangles) } \label{figurec24}
\end{minipage}\hfill
\end{figure}

Figures \ref{velocity2_5} and \ref{pressure2_5} show  color  velocity and pressure  at the fourth refinement level. { We} can clearly see that the velocity { in Figure} \ref{velocity2_5} justifies the concentration of the refinement showed in Figure \ref{figurec24}.
\begin{figure}[htbp]
\begin{minipage}[b]{0.45\linewidth}
\hspace{.4cm}
\includegraphics[width=6.6cm]{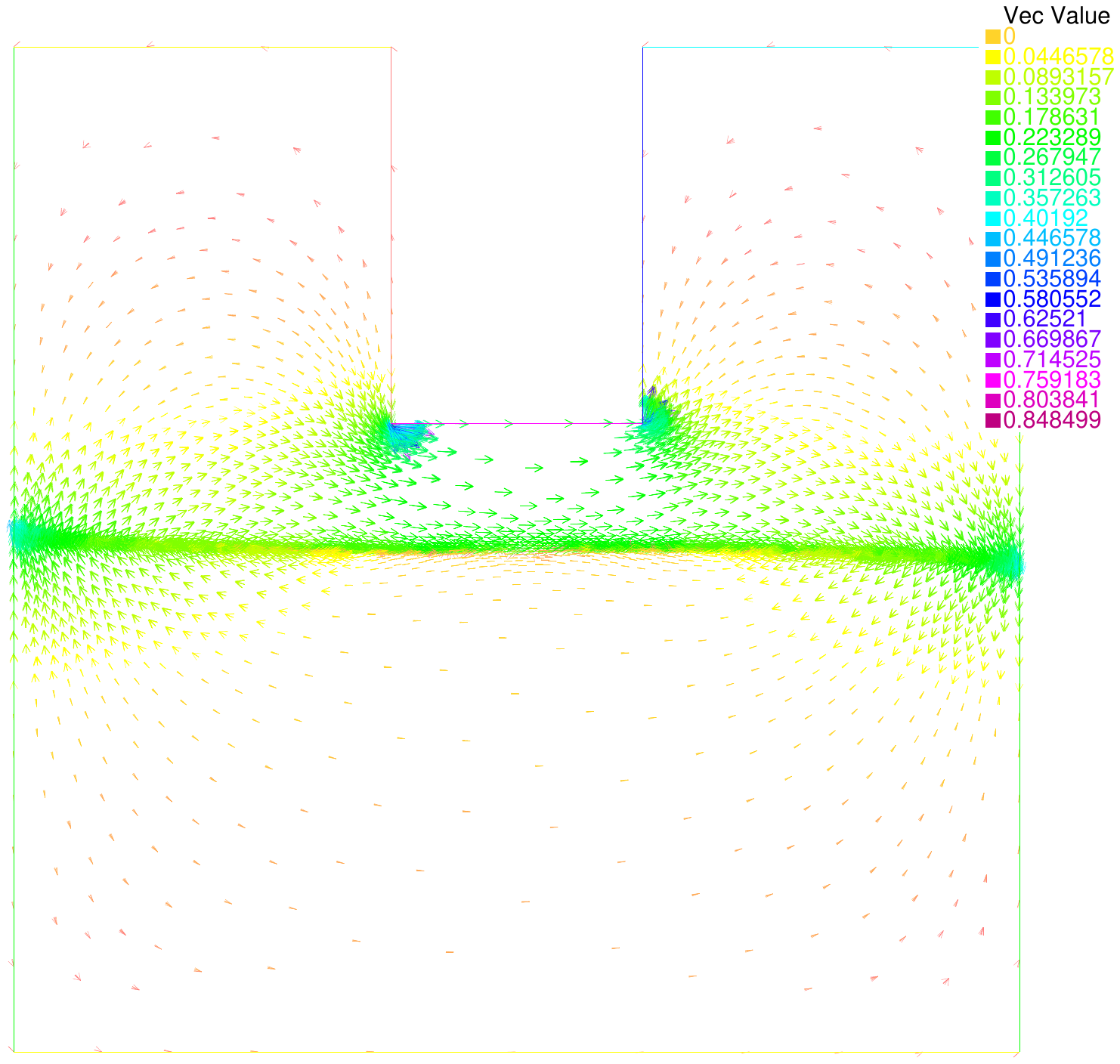}
\vspace{-.4cm}
\caption{numerical velocity at the Fourth refinement level.} \label{velocity2_5}
\end{minipage}\hfill
\begin{minipage}[b]{0.45\linewidth}
\hspace{.4cm}
\includegraphics[width=6.6cm]{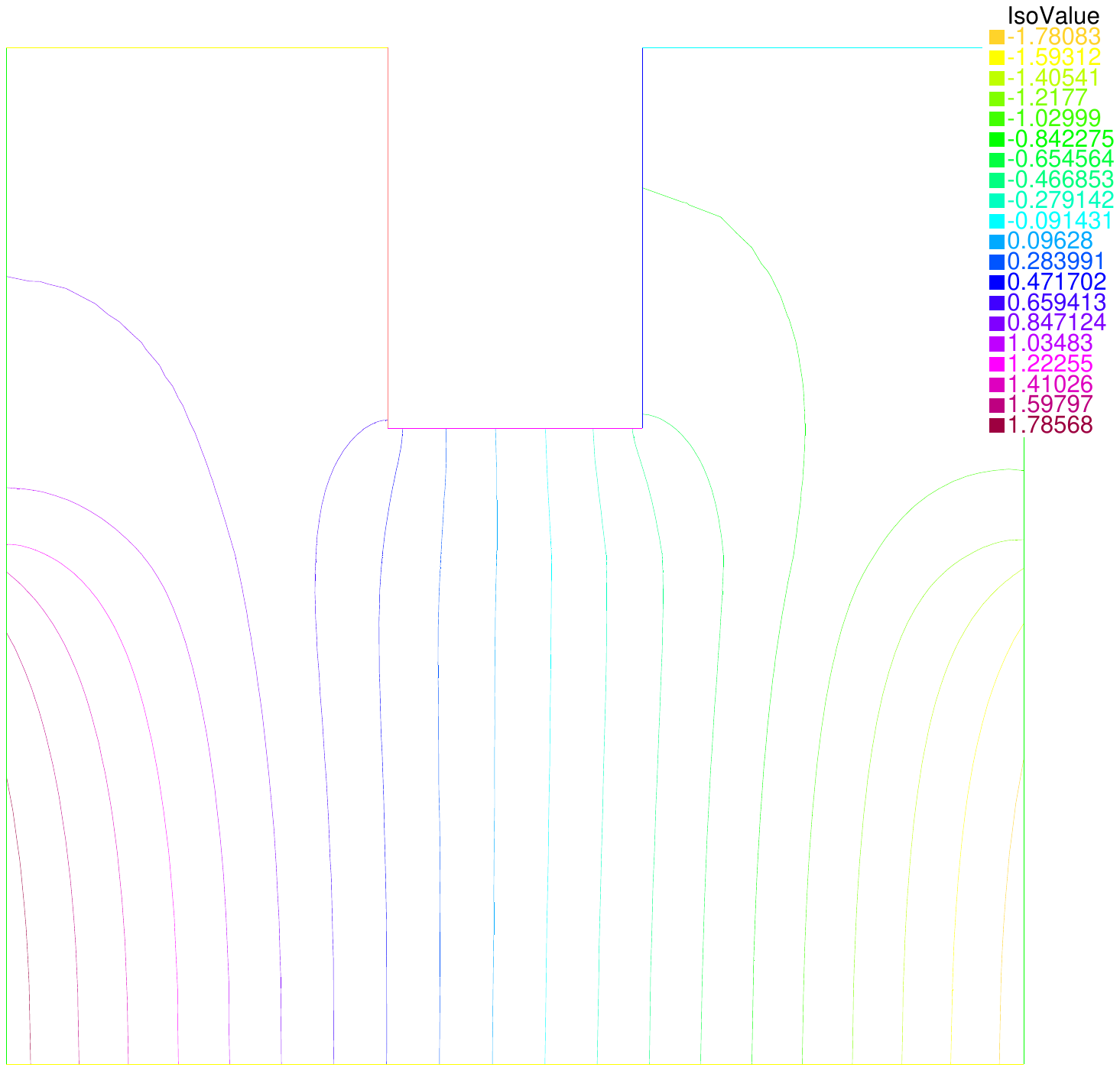}
\vspace{-.4cm}
\caption{ numerical pressure at the Fourth refinement level.} \label{pressure2_5}
\end{minipage}\hfill
\end{figure}

%
%
%
Next, we introduce the relative total error indicator given 
\[
E_{tot} = \ds \frac{\ds   \eta_i^{(D)}}{\ds
 \|\textbf{u}_h^i\|_{L^3(\Omega)} + ||\nabla(p_h^i) ||_{L^{3/2}(\Omega)} }
\]
where here also $\eta_i^{(D)}$ is computed after convergence on the iterations $i$ (by using the stopping criteria \eqref{stoppingg}). 
Figure \ref{figure1.6} shows and compares the relative total error indicator given by $E_{tot}$ between the uniform and adaptive methods. 

\begin{figure}[h!]
\centering
\includegraphics[width=9.5cm]{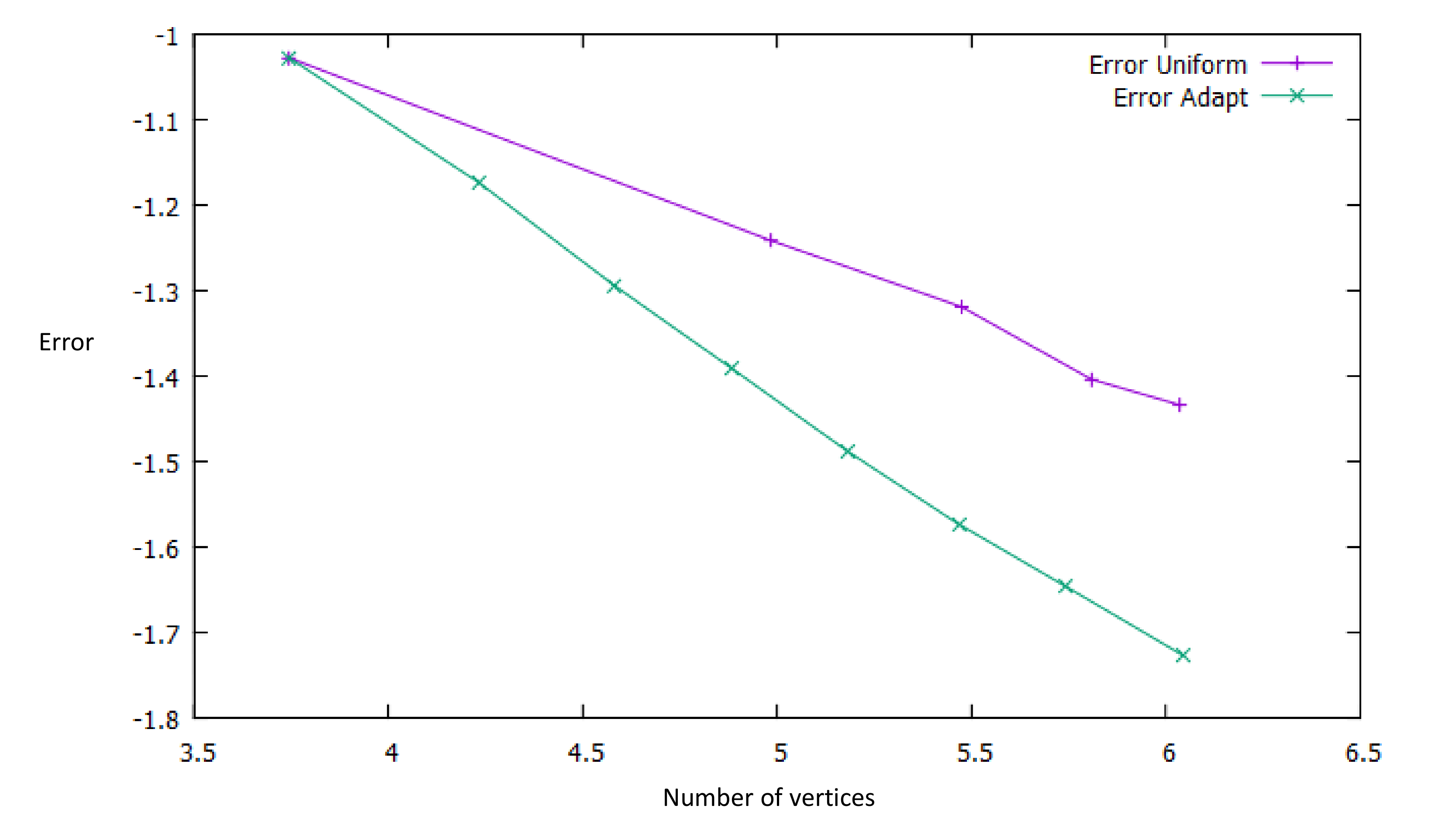}
\vskip -.4cm
\caption{Comparison of the errors $Err$ with respect to the total number of vertices in logarithmic scale for the second case.}\label{figure1.6}
\end{figure}

%

%
%
%
\section{Conclusion}\label{sec:Conclu}
In this \ article, we  discretize a steady Darcy-Forchheimer problem. We  introduce error indicators and establish  optimal {\it a posteriori} error estimates. We perform several numerical simulations where the indicators are used for mesh adaptation, and we show the efficiency of these adaptive methods. \\
%
%
%
%
%
\pagebreak


\begin{thebibliography}{}
%
\bibitem{sayah1}{\label{sayah1}} {\sc Abboud H., El Chami F.~\&~Sayah T.}, {\,  A priori and a posteriori estimates for three dimentional Stokes equations with non standard boundary conditions}, {\it Numer. Methods Partial Differential  Equations, 28, pp. 1178-1193, (2012)}.
%
\bibitem{alonso} {\sc Alonso A.,} {\, Error estimators for a mixed method,} {\it Numerische Mathematik, 74(4), 385-395, (1996).}\
%
\bibitem{Darcy} {\sc Aziz K., Settari A.,} {\, Petroleum Reservoir Simulation}, {\it Applied Science Publishers LTD, London, (1979).}
%
\bibitem{Oden1}{\sc Ainsworth, M.,  Oden, J.T.,} {\,A posteriori error estimation in finite element analysis.}{\it Computer methods in applied mechanics and engineering 142.1-2 (1997): 1-88.}
%
\bibitem{Babu78C} {\sc Babuska I. and Rheinboldt W.C.,} {\, Error estimates for adaptive finite element computations,} {\it SIAM J. Numer. Anal., 15(4), 736-754, (1978).}
%
\bibitem{Babu73} {\sc Babuska, I.,}{\, The finite element method with Lagrangian multipliers.}{\it   Numer. Math. 20, 179-192 (1973)}

%
\bibitem{belhachimi} {\sc Belhachmi, Z., Bernardi, C. and Deparis}, {\, Weighted Cl\'ement operator and application to the finite element discretization of the axisymmetric Stokes problem,} {\it Numer. Math., 105 (2), 105-217, (2006).}
%
\bibitem{Bao}
{\sc Bao, G., Jiang, X., Li, P., and Yuan, X. } {\,  An adaptive finite element DtN method for the elastic wave scattering by biperiodic structures. } {\it ESAIM: Mathematical Modelling and Numerical Analysis, 55(6), 2921-2947. (2021)}
%
\bibitem{BDMS} {\sc Bernardi C.,  Dakroub J., Mansour G., and Sayah T.}, {\, A posteriori analysis of iterative algorithms for Navier-Stokes problem}, {\it ESAIM: Mathematical Modelling and Numerical Analysis, 50(4), 1035-1055, (2016).}
%
%
%
\bibitem{bernardi2009}{\label{bernardi2009}} {\sc Bernardi C.,  Hecht F. \& Verf\"urth R.}, {\, Finite element discretization of the
three-dimensional Navier-Stokes equations with mixed boundary
conditions}, {\it  Math. Model. and Numer. Anal., 3,  1185-1201, (2009)}.
%
%
\bibitem{braess} {\sc Braess D. and Verf$\ddot{u}$rth R.,} {\it A posteriori error estimators for the Raviart-Thomas element, SIAM J. Numer. Anal., 33(6), 2431-2444, (1996).}
%
\bibitem{sayah2}{\label{sayah2}}
{\sc Bernardi C. \& Sayah T.}, {\, A posteriori error analysis of the time dependent Stokes equations   with mixed boundary conditions}, {\it IMA J. Numer. Anal. (2014),} doi: 10.1093/imanum/drt06.
%
\bibitem{sayah3}{\label{sayah3}}
{\sc Bernardi C. \& Sayah T.}, {\, A posteriori error analysis of the time dependent Navier-Stokes equations with mixed boundary conditions}, {\it SeMA, 69, 1-23, (2015).}
%
%
\bibitem{carstensen} {\sc Carstensen C.,} {\sc Aposteriori error estimate for the mixed finite element method}, {\it Mathematics of Computation, 66(218), 465-476, (1997).}
%
\bibitem{chen} {\sc Chen W. and Wang Y.,} {\sc {\it A posteriori} estimate for the $H(div;\Omega)$ conforming mixed finite element for the coupled Darcy-Stokes system,} {\it Journal of Computational and Applied Mathematics, 255, 502-516, (2014).}
%
\bibitem {PGC} {\sc Ciarlet P. G.,} {\, Basic error estimates for elliptic problems},{\it In Handbook of Numerical Analysis, Vol. II, Handbook of Numerical Analysis, pages 17-351. North-Holland, Amsterdam, (1991).}
%
\bibitem{GW} {\sc Girault V., Wheeler M.F.,} {\, Numerical discretization of a Darcy-Forchheimer model}, {\it Numer. Math.
110(2), 161-198, (2008).}
%
\bibitem{s3} {\sc Cl\'ement P.,} {\, Approximation by finite element functions using local regularization,} {\it RAIRO Anal. Num\'er. 9, R2,77-84, (1975).}
%
\bibitem{sayah5}{\label{sayah5}}
{\sc Dakroub J., Faddoul J. \& Sayah T.}, {\, A posteriori analysis of the Newton method applied to the Navier-Stokes problem }, {\it Journal of Applied Mathematics and Computing, https://doi.org/10.1007/s12190-020-01323-w, (2020).}
%
%
\bibitem{DGHS} {\sc Dib S., Girault V., Hecht F. and Sayah T.,} {\, A posteriori error estimates for Darcy's problem coupled with the heat equation,} {\it ESAIM Mathematical Modelling and Numerical Analysis, DOI:10.1051/m2an/2019049, (2019).}
%
%
\bibitem {LAM11} {\sc El Alaoui L., Ern A.,Vohral\'{\i}k M.,}{\, Guaranteed and robust a posteriori error estimates and balancing discretization and linearization errors for monotone nonlinear problems}, {\it Computable Methods in Applied Mechanics and Engineering  \textbf{200}, 2782-2795, (2011).}
%
\bibitem {ernguermont} {\sc Ern A. and Guermond J.L.,} {\, Finite element quasi-interpolation and best approximation,} {\it  ESAIM: Mathematical Modelling and Numerical Analysis, 51 (4), 1367-1385, (2017).}
%
\bibitem {ERN11} {\sc Ern A., Vohral\'{\i}k M.,}{\, Adaptive inexact Newton methods with a posteriori stopping criteria for nonlinear diffusion PDEs}, {\it SIAMJ. Sci. Comput. \textbf{35}, \textbf{4}, A1761-A1791, (2013).}
%
\bibitem{monot}{\sc Fabrie P., Regularity of the solution of Darcy-Forchheimer's equation.} {\it Nonlinear Anal. Theory Methods,} { \rm 13, 1025-1045, (1989).}
%
\bibitem{Forchheimer} {\sc Forchheimer P.}, {\, Wasserbewegung durch Boden}, {\it Z. Ver. Deutsh. Ing. 45, 1782-1788, (1901).}
%
\bibitem{gatica} {\sc Gatica G.N., Ruiz-Baier R., and Tierra G.,} {\, A mixed finite element method for Darcy's equations with pressure dependent porosity,} {\it  Mathematics of Computation, 85, 1-33, (2016).}
%
%
\bibitem{hecht}{\sc Hecht F., New development in FreeFem++}, {\it Journal of Numerical Mathematics, \textbf{20}}, (2012), 251-266.
%
\bibitem{LMJ} {\sc Lopez H., Molina B., Jose J.S.,} {\, Comparison between different numerical discretizations for a Darcy-Forchheimer model},
{\it  Electron. Trans. Numer. Anal. 34, 187-203, (2009).}
%
\bibitem{lovadina} {\sc Lovadina C. and Stenberg R.,} {\, Energy norm a posteriori error estimates for mixed finite element methods,} {\it Mathematics of Computation, 75(256), 1659-1674, (2006).}
%
\bibitem{Monk} {\sc Monk P.},{\, A posterior error indicators for Maxwell's equations,}{\it J. Comput. Appl. Math., 100 (1998), 173-190.}
%
\bibitem{HH} {\sc Pan H., Rui H.,} {\, Mixed Element Method for Two-Dimensional Darcy-Forchheimer Model},
{\it J. Sci. Comput. 52, 563-587, (2012).}
%
%
%
\bibitem{RM} {\sc Ruth D., Ma H.,} {\,  On the derivation of the Forchheimer equation by means of the averaging theorem},
{\it Transp. Porous Media 7(3), 255-264, (1992).}
%
\bibitem{SJ} {\sc Salas J.J., Lopez H., Molina B.,} {\, An analysis of a mixed finite element method for a Darcy-Forchheimer model},
{\it Mathematical and Computer Modelling 57, 2325-2338, (2013).}
%


%
%
\bibitem{SST}{\sc Sayah T., Semaan G. and Triki F.,} {\, Finite element methods for the Darcy-Forchheimer problem coupled with the convection-diffusion-reaction problem,} {\it ESAIM Mathematical Modelling and Numerical Analysis, DOI:10.1051/m2an/2021066, (2021).
}
%

\bibitem {Te} {\sc Temam, R.,} {\, Navier-Stokes equations: theory and numerical analysis.} {\it  Vol. 343. American Mathematical Soc., 2001.}
%

\bibitem{TrikiYin21} {\sc Triki F., and T. Yin} {\, Inverse Conductivity Equation with Internal Data,}  {\it Journal of Computational Mathematics, 2022.}
%
\bibitem{Verfurth1996} {\sc Verf$\ddot{u}$rth R.,} {\, A Review of A Posteriori Error Estimation and Adaptive Mesh-Refinement Techniques}, {\it Mathematics, Wiley and Teubner, New-York, 1996.}
%
\bibitem{Verfurth2013}{\label{Verfurth2013}}{\sc Verf\"urth R. },{ A posteriori Error Estimation Techniques for finite Element Methods,},{Numerical Mathematics And Scientific Computation, Oxford, (2013).}
%
\end{thebibliography}
\end{document}